\let\mathcal\mathscr
\numberwithin{equation}{section}
\newtheorem{theorem}{Theorem}[section]
\newtheorem{lemma}[theorem]{Lemma}
\newtheorem{corollary}[theorem]{Corollary}
\newtheorem{con}[theorem]{Conjecture}
\theoremstyle{definition}
\newtheorem*{ack}{Acknowledgements} 
\newtheorem*{notation}{Notation}
\renewcommand{\d}{\mathrm{d}}
\renewcommand{\phi}{\varphi}
\newcommand{\PP}{\mathbb{P}}
\renewcommand{\AA}{\mathbb{A}}
\newcommand{\ZZ}{\mathbb{Z}}
\newcommand{\NN}{\mathbb{N}}
\newcommand{\QQ}{\mathbb{Q}}
\newcommand{\RR}{\mathbb{R}}
\newcommand{\CC}{\mathbb{C}}
\newcommand{\cA}{\mathcal{A}}
\newcommand{\cD}{\mathcal{D}}
\newcommand{\cM}{\mathcal{M}}
\newcommand{\cN}{\mathcal{N}}
\renewcommand{\leq}{\leqslant}
\renewcommand{\le}{\leqslant}
\renewcommand{\geq}{\geqslant}
\renewcommand{\ge}{\geqslant}
\renewcommand{\bar}{\overline}
\newcommand{\m}{\mathbf{m}}
\newcommand{\w}{\mathbf{w}}
\newcommand{\x}{\mathbf{x}}
\newcommand{\y}{\mathbf{y}}
\renewcommand{\v}{\mathbf{v}}
\renewcommand{\u}{\mathbf{u}}
\renewcommand{\b}{\mathbf{b}}
\renewcommand{\a}{\mathbf{a}}
\renewcommand{\ss}{\mathfrak{S}}
\newcommand{\la}{\lambda}
\newcommand{\al}{\alpha}
\newcommand{\ve}{\varepsilon}
\newcommand{\bal}{\boldsymbol{\alpha}}
\newcommand{\bga}{\boldsymbol{\gamma}}
\DeclareMathOperator{\rank}{rank}
\DeclareMathOperator{\Pic}{Pic}
\DeclareMathOperator{\meas}{meas}
\DeclareMathOperator{\codim}{codim}
\DeclareMathOperator{\ann}{Ann}
\DeclareMathOperator{\Mod}{mod} 
\renewcommand{\bmod}[1]{\,(\Mod{#1})}
\newcommand{\Br}{{\rm Br}}
\newcommand{\minor}{\mathfrak{m}}
\newcommand{\major}{\mathfrak{M}}
\renewcommand{\t}{\mathbf{t}}
\renewcommand{\b}[1]{{\bf #1}}
\newcommand{\eeq}{\end{equation}}
\newcommand{\beql}[1]{\begin{equation}\label{#1}}
\newcommand{\un}[1]{\underline{#1}}
\newcommand{\QQb}{\bar{\QQ}}
\begin{document}

\title[Forms in many variables]{Forms in 
many variables\\ and differing degrees}
\author{T.D.\ Browning}
\author{D.R.\ Heath-Brown}

\address{School of Mathematics\\
University of Bristol\\ Bristol\\ BS8 1TW}
\email{t.d.browning@bristol.ac.uk}

\address{Mathematical Institute\\
Radcliffe Observatory Quarter\\ Woodstock Road\\ Oxford\\ OX2 6GG}
\email{rhb@maths.ox.ac.uk}

\date{\today}

\thanks{2010  {\em Mathematics Subject Classification.} 11G35 (11P55,  14G05)}

\begin{abstract}
We generalise Birch's seminal work on forms in many variables to
handle a system of forms in which the degrees need not all be the
same. This allows us to 
prove the Hasse principle, weak approximation, and the Manin--Peyre 
conjecture for a smooth 
and geometrically integral 
variety $X\subseteq \PP^m$,  
provided only that its dimension  
is large enough in terms of its  degree. 
\end{abstract}

\maketitle
\setcounter{tocdepth}{1}
\tableofcontents

\section{Introduction and statement of results} \label{sec:intro}

This paper will be concerned primarily with integral solutions to
general systems of homogeneous equations
\beql{sys}
F_1(x_1,\dots,x_n)=\dots=F_R(x_1,\dots,x_n)=0,
\eeq
where each form $F_i$ has coefficients in $\ZZ$. Later in the paper we
will specialize our results to ``nonsingular systems'', and make 
deductions about the Hasse principle,  weak approximation and  the
distribution of rational points of bounded height,  for 
completely general smooth varieties.

Before describing the contents of the paper in detail, we would like
to state one particularly succinct result.

\begin{theorem}\label{thm:smooth}
Let $X\subseteq{\PP}^m$ be a smooth 
and geometrically integral 
variety defined over $\QQ$.  Then $X$ satisfies
the Hasse principle and weak approximation provided only that
\[\dim(X)\geq (\deg(X)-1)2^{\deg(X)}-1.\]
Moreover there is an asymptotic formula for
the counting function for $\QQ$-rational points of bounded  height on
$X$ which agrees with the  Manin--Peyre conjecture. 
\end{theorem}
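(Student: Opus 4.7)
The plan is to deduce Theorem~\ref{thm:smooth} from the main analytic result of the paper, which extends Birch's circle method to systems of forms of differing degrees. Write $d=\deg(X)$ and $n=\dim(X)$ throughout.

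First, we may assume $X$ is nondegenerate in $\PP^m$: otherwise, replace $\PP^m$ by the linear span of $X$, which preserves $d$, $n$, and the set of rational points. The elementary bound $\deg(X)\geq \codim(X)+1$ then gives $m+1\leq n+d$, so the number of ambient variables is controlled in terms of $n$ and $d$. Next, a standard fact about smooth projective varieties (via Castelnuovo--Mumford regularity, or by taking successive general hyperplane sections) ensures that the homogeneous ideal $I_X$ is generated by forms of degrees $2,3,\ldots,d$. Choose a minimal set of such generators and organise them as a system $\mathbf{F}=(F_{i,j})$, where $F_{i,j}\in\ZZ[x_0,\ldots,x_m]$ is of degree $i$ for $i\in\{2,\ldots,d\}$ and $j\in\{1,\ldots,R_i\}$. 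This $\mathbf{F}$ is a complete set of defining equations for $X$ inside $\PP^m$.

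The next step is to apply the main theorem of the paper to the system $\mathbf{F}$. That theorem delivers an asymptotic formula of Hardy--Littlewood shape for the number of primitive integer zeros of $\mathbf{F}$ in an expanding box, provided the Birch-type singular locus $V^{*}(\mathbf{F})\subset\AA^{m+1}$ has sufficiently small dimension relative to $m+1$. Because $X$ is smooth, the intersection $V^{*}(\mathbf{F})\cap\widehat{X}$ (with $\widehat{X}$ the affine cone over $X$) reduces to $\{\mathbf{0}\}$; combined with the degree-by-degree count of equations controlled by the smoothness of $X$, this yields the required dimension bound on $V^{*}(\mathbf{F})$. The numerical hypothesis $n\geq (d-1)2^{d}-1$ is calibrated precisely so that, once one invokes $m+1\leq n+d$, the Birch rank condition for $\mathbf{F}$ is satisfied.

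The main obstacle lies in this final step: in Birch's original single-degree setting, the transition from smoothness of $X$ to a dimension bound on $V^{*}$ is clean, but here one must track, block by degree, how the smoothness of $X$ controls the vanishing of maximal minors of a Jacobian whose rows sit in different degree strata. The exponential factor $2^{\deg(X)}$ in the hypothesis reflects the cost of the Weyl-differencing arguments cascaded through degrees $2,\ldots,d$ in the minor-arc analysis. Once the asymptotic formula is in hand, the remaining arguments are standard: non-vanishing of the singular series and singular integral under local solubility at every place yields the Hasse principle and weak approximation, while Peyre's interpretation of the leading constant as a product of local Tamagawa densities yields agreement with the Manin--Peyre conjecture.
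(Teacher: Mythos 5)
There is a genuine gap at the heart of your argument: you never show that $X$ is a complete intersection, and without that the plan of ``take a generating set of $I_X$ and feed it to the main theorem'' cannot work. The paper's analytic input for this deduction (Theorem \ref{thm:main2}) applies to a \emph{nonsingular system}, which by definition consists of exactly $R=\codim(X)$ forms whose Jacobian has full rank $R$ everywhere on the punctured cone. If $X$ is smooth but not a complete intersection, any generating set $\mathbf{F}$ of its ideal has $R>\codim(X)$ elements, and at every point $\x\neq\0$ of the affine cone $\widehat{X}$ the Jacobian of $\mathbf{F}$ has rank exactly $\codim(X)<R$; hence the Birch singular locus contains all of $\widehat{X}$, and your claim that smoothness forces $V^{*}(\mathbf{F})\cap\widehat{X}=\{\0\}$ is false. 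Worse, the predicted main term $P^{n-\cD}$ with $\cD=\sum_d d\,r_d$ would then be of strictly smaller order than the true count (redundant equations inflate $\cD$ without thinning the solution set), so the Hardy--Littlewood asymptotic for such an over-determined system is simply false and no calibration of the numerical hypotheses can rescue it. The Castelnuovo--Mumford regularity input you invoke is therefore beside the point.

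The missing idea is the theorem of Bertram, Ein and Lazarsfeld: a smooth $X\subseteq\PP^m$ is a complete intersection whenever $\deg(X)\le m/(2(m-\dim X))$. Combining this with $m\le\dim(X)+\deg(X)-1$ (your linear-span reduction) shows that $X$ is a complete intersection as soon as $\dim(X)>\deg(X)(2\deg(X)-3)$, which is implied by the hypothesis $\dim(X)\ge(\deg(X)-1)2^{\deg(X)}-1$ for $\deg(X)\ge 2$. One then needs Lemma \ref{lem:new} to see that the $\codim(X)$ defining forms can be taken with integer coefficients, Lemma \ref{lem:NS} together with the passage to an equivalent optimal system and Lemma \ref{lem:improve} to bound the loci $B_d$ by $R-1$, and finally non-degeneracy gives $\deg(X)\ge\cD$ so that \eqref{n>} implies the hypothesis $n>(\cD-1)2^{\cD}$ of Theorem \ref{thm:main2}. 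Your concluding remarks about the singular series, singular integral and Peyre's constant are fine once this is in place, though the anticanonical identification $\omega_X^{-1}=\mathcal{O}(n-\cD)$ itself also requires the complete intersection structure.
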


The meaning of the final sentence will be made clear later in this introduction.

When $X$ is a hypersurface this theorem essentially reduces to a
well-known result 
of Birch \cite{birch}.  However we are able to handle varieties of
arbitrary codimension.  We would like to emphasize indeed 
that our 
hypotheses make 
no reference to the 
shape of the defining equations for $X$. In particular we
have not required $X$ to be a complete intersection.

It is rather striking that Theorem \ref{thm:smooth}  provides such
fine arithmetic information about the set $X(\QQ)$ of $\QQ$-rational
points on $X$ with such little geometric input.  
In the setting of hypersurfaces, for example, 
Harris, Mazur and Pandharipande 
\cite[\S~1.2.2]{HMP} have asked whether  
the above inequality already implies that $X$  
is {\em unirational},  meaning that there is a dominant rational map  
$\PP^{m-1}\rightarrow X$ defined  over $\bar\QQ$.   
In fact one of the main results in  \cite{HMP} 
shows that  there is an integer
$M(d)$ such that for $m\geq M(d)$ any smooth hypersurface 
$X\subseteq \PP^m$ 
of degree $d$ is indeed unirational. 
The value of $M(d)$ obtained is extremely large, and grows much faster 
than a $d$-fold iterated exponential of  $d$. 
It would be interesting to determine whether the methods of \cite{HMP}  
could be generalised to prove an analogous result for 
general smooth varieties. 

\medskip

Our principal tool will be the Hardy--Littlewood circle method, so
that we will be interested in the case in which the number of
variables is large. Our general problem has been considered by Schmidt
\cite{schmidt}, whose main result establishes the 
Hardy--Littlewood formula when the number of variables 
is sufficiently large in terms of certain 
``$h$-invariants''. Schmidt's work allowed him to deduce, for example,
that the system always has non-trivial solutions when the forms all 
have odd degrees, provided only that the number of variables is large
enough in terms of the 
degrees. The number  required is very large, but not as
large as in the original elementary proof of this result by Birch
\cite{odd}.  In general, while Schmidt's lower bound on the number of
variables required is explicit, the bound is quite awkward to compute, grows
rapidly, and depends on $h$-invariants which are very hard to
calculate. However, Schmidt also establishes a result (see
\cite[Corollary, page 262]{schmidt}) which is tolerably efficient for 
nonsingular systems, and which we will describe in a little more
detail later.  In the context of Theorem~\ref{thm:smooth} it would
produce a result when $n$ is very roughly of size $2^{3\deg(X)}$ or 
more. 

It is this second type of result that we wish to explore. Many of the
ideas go back to work of Birch
\cite{birch}. The method requires the system not to be too singular, but then
gives relatively good lower bounds for the number of variables
required. However Birch's original result needed the forms all to have the
same degree, and
there is a significant technical problem in extending
the method to the general case.  Schmidt showed how this might be
overcome, but his approach is somewhat wasteful, and does not recover
Birch's theorem in the case in which the forms all have the same
degree.  One of the
main purposes of this paper is to show how forms of unequal degrees can be
handled in an efficient manner, so as to give results in the spirit
of Birch \cite{birch} for arbitrary systems.

In order to describe Birch's result we introduce the {\em singular
locus} for the system of forms (\ref{sys}), which is the set
\[\{\x\in\AA^n:\rank(J(\x))<R\},\]
where $J(\x)$ is the Jacobian matrix of size
$R\times n$ formed from the
gradient vectors $\nabla F_1(\x),\dots, \nabla F_R(\x)$.  We note
that the system (\ref{sys}) defines an 
algebraic variety $V\subseteq\AA^n$. 
However, points of Birch's singular locus are not necessarily 
singular points of $V$, since they are not required to lie on $V$. 
If we write $B$ for the dimension (in $\AA^n$) of Birch's singular locus
then his theorem is that the usual Hardy--Littlewood formula holds as
soon as \beql{br}
n>B+R(R+1)(D-1)2^{D-1},
\eeq
where $D$ is the common degree of the forms $F_i$.

For our main result we will need a little more notation.
We will re-number the forms $F_i$ in (\ref{sys}), grouping together
those of equal degree.
Let $D\in\NN$ and let $r_d\in\NN\cup\{0\}$ for $1\le d\le D$, with
$r_D\ge 1$.   Suppose then that for every $d\le D$ we have forms
\beql{Fdn}
F_{1,d}(x_1,\dots,x_n),\dots,F_{r_d,d}(x_1,\dots,x_n)\in\ZZ[x_1,\dots,x_n]
\eeq
of degree $d$, so that  the total number of forms is
\[R=r_1+\dots+r_D.\]
In practice, if one had any forms of degree 1 it would be natural to
use them to eliminate appropriate variables, leaving a system of forms
of degrees at least 2 but involving fewer variables than originally.

It will be convenient to write
\[\Delta:=\{d\in\NN:\, r_d\ge 1\}\subseteq\{1,2,\dots,D\}.\]
For each degree $d\in\Delta$ we define the matrix
\[J_d(\x):=\left(\begin{array}{c}\nabla F_{1,d}(\x)\\ \vdots \\
\nabla F_{r_d,d}(\x)\end{array}\right)\]
and we set
\[S_d:=\{\x\in\AA^{n}:\,\rank(J_d(\x))<r_d\}.\]
This defines an affine algebraic variety and we henceforth set
\begin{equation}\label{eq:sig-d}
B_d:=\dim(S_d).
\end{equation}
When $r_d=0$ we shall take $B_d=0$. It will also be convenient to set
$B_0=0$.  Our method breaks down if there is
any degree $d$ for which $B_d=n$, and so we impose the condition that
$B_d<n$ for every $d\in\Delta$. For example, this rules out the case
in which the forms (\ref{Fdn}) are linearly dependent. 

At this point we should observe that forthcoming independent work of Dietmann
\cite{D} and Schindler \cite{S}
allows one to replace $B_d$ by an alternative invariant, which we
denote temporarily by $B_d'$.  One can show in complete generality that
$B_d'\le B_d$, but that $B_d'$ can be strictly less than $B_d$ in
appropriate cases.  However we
will work with Birch's invariant $B_d$ throughout this paper.

We wish to count integral vectors in a fixed congruence 
class, and which lie in the dilation of a fixed box.  
We therefore choose an $n$-dimensional box  $\mathcal{B}\subseteq [-1,1]^n$,
with sides aligned to the coordinate axes.
We also give ourselves 
a modulus $M\in\NN$ and a vector $\b{m}_0\in\ZZ^n$ 
with coordinates in $[0,M-1]$. 
The  box $\mathcal{B}$, the modulus $M$ and the vector $\b{m}_0$ will
be considered fixed.  
For any (large) 
positive real 
$P$ we then write
\[N(P):=
\#\{\x=\b{m}_0+M\y:\,\y\in\ZZ^n,\,\x\in P\mathcal{B},  
\,F_{i,d}(\x)=0\, \forall i,d\}.\]
The vectors $\x$ which occur here all satisfy
$\x\equiv\b{m}_0\bmod{M}$. 
Typically we will want to choose the box $\mathcal{B}$ so that the
vectors $\x$ lie close (in a projective sense) to a given real point.  
Suppose  we have chosen a non-zero vector $\x_0\in (-1,1)^n$  and a
small positive constant $\eta.$ 
Taking $|\x|$ to denote the sup-norm of the vector $\x$ and setting 
$$
\mathcal{B}=\{\mathbf{u}\in \RR^n: |\mathbf{u}-\x_0|<\eta \},$$  
we see that 
$P^{-1}\x$ will be 
close $\x_0$ whenever  
$\x$ is counted by $N(P)$. 

Unfortunately the condition for $n$ occurring in our first result is rather
complicated.  We put \begin{equation}\label{eq:Dj}
\mathcal{D}_j:=r_1+2r_2+\dots+jr_j,
\end{equation}
for $1\leq j\leq D$, and we set $\mathcal{D}_0:=0$ and
$\mathcal{D}:=\mathcal{D}_D$. Finally we write 
\begin{equation}\label{eq:def-sj}
s_d:=\sum_{k=d}^D \frac{2^{k-1}(k-1)r_k}{n-B_k}.
\end{equation}

With these conventions we now have the following.
\begin{theorem}\label{thm:main}
Suppose we have
\[\mathcal{D}_d\left(\frac{2^{d-1}}{n-B_d} +s_{d+1}\right)
+s_{d+1}+\sum_{j=d+1}^D s_jr_j<1\] 
for $d=0$ and for every $d\in\Delta$.
Then there is a positive $\delta$ such that
\[N(P)=\sigma_{\infty}\left(\prod_p\sigma_p\right)P^{n-\cD}
+O(P^{n-\cD-\delta}),\]
where $\sigma_{\infty}$ and $\sigma_p$ are the usual local densities,
given by \eqref{eq:local-inf}  and \eqref{eq:local-p}, respectively.
\end{theorem}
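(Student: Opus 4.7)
The proof will follow the Hardy--Littlewood circle method. Write $\boldsymbol{\alpha} = (\alpha_{i,d})_{d \in \Delta,\, 1 \leq i \leq r_d} \in [0,1]^R$ and set
$$S(\boldsymbol{\alpha}) := \sum_{\mathbf{x} \in P\mathcal{B} \cap (\mathbf{m}_0 + M\ZZ^n)} \e\left( \sum_{d \in \Delta} \sum_{i=1}^{r_d} \alpha_{i,d} F_{i,d}(\mathbf{x}) \right),$$
so that orthogonality yields $N(P) = \int_{[0,1]^R} S(\boldsymbol{\alpha}) \, \d\boldsymbol{\alpha}$. The plan is to partition $[0,1]^R$ into major and minor arcs, extract an asymptotic from the former and absorb the latter into the error term. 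Major arcs $\mathfrak{M}$ are defined in the standard way as the union over coprime pairs $(q,\mathbf{a})$ with $q \leq P^{\Delta_0}$ of neighborhoods $|\alpha_{i,d} - a_{i,d}/q| \leq q^{-1} P^{-d+\Delta_0}$, for an exponent $\Delta_0 > 0$ to be optimised at the end.

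The main technical ingredient is a Weyl-type inequality controlling $|S(\boldsymbol{\alpha})|$ on the minor arcs. For each form $F_{i,d}$ of degree $d$ one applies $d-1$ rounds of Weyl differencing with respect to the variable $\alpha_{i,d}$. After all differencing, the resulting sum counts lattice points at which certain multilinear forms are small. A Davenport-style geometry-of-numbers argument, using the crucial hypothesis $B_d < n$, then shows that either the count is $O(P^{n-\theta})$ with a power saving $\theta > 0$, or else the differenced linear combinations of $\boldsymbol{\alpha}$ are simultaneously close to rationals with a common small denominator. The factor $2^{d-1}$ reflects the Cauchy--Schwarz inflation through the $d-1$ differencing steps, while $n-B_d$ measures the number of non-degenerate directions in the auxiliary count; hence the appearance of $2^{d-1}(d-1)r_d/(n-B_d)$ in the definition of $s_d$. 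Doing the differencing only $d-1$ times for a degree-$d$ form, rather than $D-1$ times as in Schmidt's treatment, is what recovers Birch's strength in the equal-degree case.

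With this pointwise bound, the minor-arc integral is estimated by a Dirichlet approximation / dissection argument: at each level $d \in \Delta$ one balances the number of rational targets against the Weyl saving from all degrees $\geq d$. A direct computation shows that the hypothesis
$$\mathcal{D}_d\left(\frac{2^{d-1}}{n-B_d} + s_{d+1}\right) + s_{d+1} + \sum_{j=d+1}^D s_j r_j < 1,$$
imposed for $d = 0$ and each $d \in \Delta$, is precisely what is required to produce a power saving of the form $O(P^{n - \mathcal{D} - \delta})$ on the minor arcs: the quantity $\mathcal{D}_d$ is the measure exponent accumulated from the major-arc structure up to degree $d$, while $s_d$ aggregates the Weyl losses from degrees $\geq d$, and the condition enforces that the former strictly dominates the latter at every scale.

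On the major arcs the exponential sum factorises in the standard way as a product of a complete exponential sum $S_{\mathbf{a},q}$ and a smooth archimedean integral $I(\boldsymbol{\beta})$. Summing in $\mathbf{a}$ and $q$, and integrating in $\boldsymbol{\beta}$, produces the singular series $\prod_p \sigma_p$ and the singular integral $\sigma_{\infty}$; the same Weyl bound, specialised to $\boldsymbol{\alpha} = \mathbf{a}/q$, gives the requisite pointwise estimates on $S_{\mathbf{a},q}$ to guarantee absolute convergence of the singular series, and the hypothesis $B_d < n$ likewise makes $\sigma_\infty$ finite. Combining with the minor-arc estimate gives the claimed asymptotic. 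The principal obstacle is the organisation of the mixed-degree Weyl differencing so that cross-terms between forms of different degrees do not introduce spurious losses; this is exactly what the bookkeeping by $\mathcal{D}_d$ and $s_d$ is designed to accomplish.
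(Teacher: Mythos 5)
Your overall strategy is the right one, and your accounting of where the exponents $2^{d-1}$, $n-B_d$, $\mathcal{D}_d$ and $s_d$ come from matches the paper. But there is a genuine gap at the crux of the argument: you never explain how the forms of degree $>d$ are eliminated when you perform the $d-1$ differencing steps adapted to the degree-$d$ block. Weyl differencing acts on the summation variable $\x$, not on $\alpha_{i,d}$, and after $d-1$ ordinary differencings of the full phase a degree-$e$ form with $e>d$ still survives as a polynomial of degree $e-d+1\ge 2$ in the remaining variable; the final sum is then not a linear exponential sum and the geometry-of-numbers step collapses. Attributing the resolution of the "cross-terms" to "the bookkeeping by $\mathcal{D}_d$ and $s_d$" is not correct: that bookkeeping only organises the final minor-arc integration, it does nothing to remove the higher-degree perturbation from the differenced phase.

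The paper's actual device (Section 4) is to replace the last Weyl squaring by a shift through multiples of $q$, in the manner of the $q$-analogue of van der Corput's method, where $q$ is the common denominator of the rational approximations \emph{already obtained} for the coefficients $\alpha_{i,j}$ with $j>d$. Writing the higher-degree part as $g=q^{-1}g_1+g_2$, the differenced $q^{-1}g_1$ becomes an integer-valued polynomial (so its exponential is identically $1$), and the differenced $g_2$ has derivatives small enough to be removed by partial summation. This forces the argument to be iterative: one must first treat degree $D$ (where there is no perturbation), extract the denominator $q_D$, then use it in the van der Corput step for the next degree down, producing nested denominators $q_D\mid q_{D'}\mid\cdots$, and the divisibility relations among these are also needed later for the measure estimates on the minor arcs and for the convergence of the singular series. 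Your sketch treats the degrees as if they could be handled independently and in parallel, which is exactly the point at which a naive extension of Birch fails and which this modification is designed to repair.
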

Here, and for the rest of the paper, the implied constant is allowed
to depend on the forms $F_{i,d}$ (and hence on $n$, $R$ and $\cD$) and also
on the box $\mathcal{B}$, the 
modulus $M$ and the vector $\b{m}_0$.

We observe at this point that the entire analysis may be applied to
systems of polynomials $f_{i,d}$, rather than systems of forms. For
each such polynomial one defines the form $F_{i,d}$ to be the
homogeneous part of $f_{i,d}$ of degree $d$.  One then uses the
various 
$F_{i,d}$ to define the numbers $B_d$ as before. The
entire argument now goes through with only minor modifications.

Although our condition on $n$ is somewhat complicated the reader may
readily verify that if $r_1=\dots=r_{D-1}=0$ and $r_D=R$, then it
is equivalent to  Birch's constraint 
in (\ref{br}).  In order to understand better our condition we give
the following corollary of Theorem \ref{thm:main}, which is simpler
but potentially weaker.

\begin{corollary}\label{simple}
Write
\[B:=\max\{B_d:\,d\in\Delta\}\]
and set
\[t_d:=\sum_{k=d}^D 2^{k-1}(k-1)r_k,\;\;\;(1\le d\le D+1),\]
\[n_0(d):=\mathcal{D}_d\left(2^{d-1} +t_{d+1}\right) +t_{d+1}+\sum_{j=d+1}^D t_jr_j\]
and
\[n_0:=\max\{n_0(d):\,d\in\Delta\cup\{0\}\}.\] 
Then the conclusion  of Theorem \ref{thm:main} holds whenever
$n>B+n_0$.
\end{corollary}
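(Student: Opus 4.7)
The plan is to show that the hypothesis $n > B + n_0$ of the corollary implies the (more delicate) condition on $n$ appearing in Theorem~\ref{thm:main}, by uniformly replacing each $n-B_k$ in the denominators of the $s_j$ by the single quantity $n-B$. This is essentially a bookkeeping exercise, since $n_0(d)$ is obtained from the LHS of the hypothesis of Theorem~\ref{thm:main} precisely by clearing the denominators $n-B_k$ and replacing them with a common factor.

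First I would note that, by the definition $B := \max_{d \in \Delta} B_d$, one has $n - B_k \geq n - B$ for every $k \in \Delta$, while for $k \notin \Delta$ we have $r_k = 0$ so the corresponding term contributes nothing to $s_d$. Consequently, for every $d$ with $1 \le d \le D+1$,
\[
s_d \;=\; \sum_{k=d}^D \frac{2^{k-1}(k-1)r_k}{n-B_k} \;\leq\; \frac{1}{n-B}\sum_{k=d}^D 2^{k-1}(k-1)r_k \;=\; \frac{t_d}{n-B},
\]
and similarly $\frac{2^{d-1}}{n-B_d} \leq \frac{2^{d-1}}{n-B}$ for $d \in \Delta$.

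Next I would substitute these bounds into the left-hand side of the condition appearing in Theorem~\ref{thm:main}. For any $d \in \Delta \cup \{0\}$, this yields
\[
\mathcal{D}_d\!\left(\frac{2^{d-1}}{n-B_d} + s_{d+1}\right) + s_{d+1} + \sum_{j=d+1}^D s_j r_j
\;\leq\; \frac{1}{n-B}\!\left(\mathcal{D}_d(2^{d-1} + t_{d+1}) + t_{d+1} + \sum_{j=d+1}^D t_j r_j\right),
\]
and the parenthesised quantity on the right is exactly $n_0(d)$, by definition. (For $d = 0$ one interprets $\mathcal{D}_0 = 0$ and $2^{d-1}/(n-B_d) = 1/(n-B_0) \le 1/(n-B)$, but the factor $\mathcal{D}_0 = 0$ kills that term anyway.) Hence the LHS is at most $n_0(d)/(n-B) \leq n_0/(n-B)$.

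Finally, the assumption $n > B + n_0$ is equivalent to $n_0/(n-B) < 1$, so the hypothesis of Theorem~\ref{thm:main} holds for every $d \in \Delta \cup \{0\}$, and the conclusion of that theorem applies verbatim. There is no real obstacle here beyond the uniform estimate $n - B_k \ge n - B$; the only mild subtlety is checking the $d = 0$ case, where the factor $\mathcal{D}_0 = 0$ means the first term of the bound is vacuous and one is simply comparing $s_1 + \sum_{j=1}^D s_j r_j$ against $1$.
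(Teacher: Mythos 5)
Your argument is correct and is exactly the (omitted) proof the authors intend: the paper states Corollary \ref{simple} without proof, and the only content is the uniform bound $n-B_k\ge n-B$ turning each $s_d$ into $t_d/(n-B)$ and the condition of Theorem \ref{thm:main} into $n_0(d)/(n-B)<1$. Your handling of the $d=0$ case and of indices $k\notin\Delta$ (where $r_k=0$) is also right, so nothing is missing.
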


For comparison, the result of Schmidt \cite[Corollary, page
262]{schmidt} mentioned before would establish the same conclusion as
Theorem \ref{thm:main} as soon as
\[n>
\max_{d\le D}\left(B_d+(d-1)(1+2^{1-d})^{-1}2^{3d-5}r_dD\mathcal{D}\right).\]

As examples of Corollary \ref{simple}
we proceed to consider some  test cases.

\begin{corollary}\label{cor:2D}
For a system consisting of $r\ge 1$ quadratic forms and a single form of degree
$D\ge 3$ we have $n_0=(2+r)(D-1)2^{D-1}+2r(r+1)$ when $r>
(D-1)2^{D-2}$, and $n_0=(2+2r)(D-1)2^{D-1}+4r$ otherwise.
\end{corollary}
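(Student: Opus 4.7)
The plan is to apply Corollary~\ref{simple} directly: since only $r_2=r$ and $r_D=1$ are nonzero, we have $\Delta=\{2,D\}$, so $n_0=\max\{n_0(0),n_0(2),n_0(D)\}$, and the task reduces to evaluating three explicit quantities and comparing them.

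First I would compute the auxiliary invariants. From \eqref{eq:Dj} one obtains $\cD_0=0$, $\cD_2=2r$, and $\cD_D=2r+D$. For the $t_d$'s, the sum $\sum_{k=d}^D 2^{k-1}(k-1)r_k$ picks up only the contributions $k=2$ and $k=D$, yielding $t_{D+1}=0$, $t_j=(D-1)2^{D-1}$ for $3\le j\le D$, and $t_2=2r+(D-1)2^{D-1}$.

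Next I substitute into the formula
\[n_0(d)=\cD_d\bigl(2^{d-1}+t_{d+1}\bigr)+t_{d+1}+\sum_{j=d+1}^D t_jr_j.\]
For $d=D$ the sum is empty and $t_{D+1}=0$, giving $n_0(D)=(2r+D)2^{D-1}$. For $d=2$ the sum collapses to $t_D\cdot r_D=(D-1)2^{D-1}$, producing $n_0(2)=4r+(2+2r)(D-1)2^{D-1}$. For $d=0$ the first term vanishes and the sum splits as $t_2\cdot r+t_D\cdot 1$, giving $n_0(0)=2r(r+1)+(2+r)(D-1)2^{D-1}$.

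To conclude I identify the maximum. A short calculation gives $n_0(0)-n_0(D)=2r(r+1)+\bigl[D(r+1)-(3r+2)\bigr]2^{D-1}$, which is strictly positive for $D\ge 3$ and $r\ge 1$ (since $D(r+1)-(3r+2)\ge 1$), so $n_0(D)$ may be discarded. Finally,
\[n_0(0)-n_0(2)=2r(r-1)-r(D-1)2^{D-1}=r\bigl[2(r-1)-(D-1)2^{D-1}\bigr],\]
which is nonnegative precisely when $r-1\ge (D-1)2^{D-2}$; since both sides are integers (using $D\ge 3$), this is equivalent to $r>(D-1)2^{D-2}$, giving the stated dichotomy. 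The only real obstacle is arithmetic bookkeeping in the three substitutions; there is no conceptual difficulty.
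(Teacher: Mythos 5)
Your proposal is correct and follows essentially the same route as the paper: apply Corollary~\ref{simple}, compute $n_0(D)=(D+2r)2^{D-1}$, $n_0(2)=(2+2r)(D-1)2^{D-1}+4r$ and $n_0(0)=(2+r)(D-1)2^{D-1}+2r(1+r)$, then check that $n_0(0)\ge n_0(D)$ always while $n_0(0)\ge n_0(2)$ exactly when $r>(D-1)2^{D-2}$. Your three intermediate values agree with the paper's, and you in fact supply the comparison details that the paper leaves to the reader.
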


Thus if $D$ is fixed and $r$ tends to infinity our bound is asymptotic
to the value $2r(r+1)$ we would have for a system consisting solely
of quadratic forms. On the other hand, when $r$ is fixed and $D$ grows
we do not get a bound asymptotic to the value $(D-1)2^{D}$ we would
have for a single form of degree $D$.

The proof of Corollary \ref{cor:2D} is a straightforward calculation.
We find that
\begin{align*}
n_0(D)&=(D+2r)2^{D-1},\\
n_0(2)&=(2+2r)(D-1)2^{D-1}+4r
\end{align*}
and 
$$
n_0(0)=(2+r)(D-1)2^{D-1}+2r(1+r).
$$
Hence $n_0(D)\le n_0(0)$ for every value of $r$ and $n_0(0)\geq n_0(2)$ if and
only if $r> (D-1)2^{D-2}$.

\begin{corollary}\label{cor:DD}
For a system consisting of one form of degree $D$ and one of degree $E$,
where $D>E\ge 2$, we have \[n_0=(2+E)(D-1)2^{D-1}+E2^{E-1}.\]
\end{corollary}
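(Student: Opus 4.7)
The plan is simply to specialize the formulas in Corollary~\ref{simple} to the case $r_E = r_D = 1$ (all other $r_k = 0$), compute the three candidate values $n_0(0)$, $n_0(E)$, $n_0(D)$ directly, and verify that the maximum is realized at $d=E$. The argument is purely arithmetical, so the task is mostly careful bookkeeping.

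First I record the auxiliary sequences. From \eqref{eq:Dj}, the quantity $\mathcal{D}_j$ vanishes for $j < E$, equals $E$ for $E \leq j < D$, and equals $E+D$ at $j=D$. Likewise, starting from $t_{D+1}=0$ the definition of $t_d$ gives
\[
t_d = (D-1)2^{D-1} \quad (E < d \leq D), \qquad t_d = (E-1)2^{E-1} + (D-1)2^{D-1} \quad (d \leq E).
\]
Substituting these into the definition of $n_0(d)$ and noting that in the sum $\sum_{j=d+1}^D t_j r_j$ only $j=D$ (for $d<D$) or $j=E,D$ (for $d=0$) survive, I get
\begin{align*}
n_0(D) &= (E+D)\, 2^{D-1},\\
n_0(E) &= E\bigl(2^{E-1} + (D-1)2^{D-1}\bigr) + (D-1)2^{D-1} + (D-1)2^{D-1} = E\, 2^{E-1} + (E+2)(D-1)\, 2^{D-1},\\
n_0(0) &= t_1 + t_E + t_D = 2(E-1)\, 2^{E-1} + 3(D-1)\, 2^{D-1}.
\end{align*}
The expression claimed for $n_0$ matches $n_0(E)$ exactly, so the remaining task is to check that $n_0(E)$ is the maximum.

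For this, short algebraic manipulations give
\[
n_0(E) - n_0(D) = E\, 2^{E-1} + (E+1)(D-2)\, 2^{D-1},
\]
which is positive since $D > E \geq 2$ forces $D-2 \geq 1$; and
\[
n_0(E) - n_0(0) = (E-1)(D-1)\, 2^{D-1} - (E-2)\, 2^{E-1},
\]
which is positive because $2^{D-1} \geq 2 \cdot 2^{E-1}$ combined with $(E-1)(D-1) \geq E-2$ (trivial when $E=2$, and immediate from $D-1 \geq 2$ otherwise) makes the first term dominate. There is no genuine obstacle: the only subtlety worth noting is that although the single-form degree-$D$ contribution $n_0(D) = (E+D)2^{D-1}$ might at first appear to be the largest term, the cross-interaction between the two degrees inflates $n_0(E)$ past $n_0(D)$, which is precisely what the corollary encodes.
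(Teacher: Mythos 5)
Your computation is correct and follows exactly the paper's route: specializing Corollary \ref{simple} to $r_E=r_D=1$, obtaining the same three values $n_0(D)=(D+E)2^{D-1}$, $n_0(E)=(2+E)(D-1)2^{D-1}+E2^{E-1}$ and $n_0(0)=3(D-1)2^{D-1}+2(E-1)2^{E-1}$, and verifying that $n_0(E)$ dominates. In fact you supply the comparison inequalities explicitly, which the paper leaves as "one readily checks".
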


In particular, if $E\ge 4$ then we have a larger value for $n_0$ than
for a system consisting of two forms of degree $D$.  This is slightly
disappointing, since one would expect that is is ``easier'' to handle a
pair of forms of degrees 4 and 5, say, than two forms of degree 5.

Again the proof of Corollary 
\ref{cor:DD}
 is a straightforward calculation.
This time we find that
\begin{align*}
n_0(D)&=(D+E)2^{D-1},\\
n_0(E)&=(2+E)(D-1)2^{D-1}+E2^{E-1}
\end{align*}
and
\[n_0(0)=3(D-1)2^{D-1}+2(E-1)2^{E-1},\]
and one readily checks that $n_0(E)$ is at least as large as $n_0(D)$
or $n_0(0)$. 

In general we can give the following crude upper bound for $n_0$.
\begin{theorem}\label{thm:crude}
We have
\[n_0+R-1\le \cD^2 2^{D-1}\le R^2D^2 2^{D-1}\]
and
\[n_0+R-1\le (\cD-1)2^{\cD}.\]
\end{theorem}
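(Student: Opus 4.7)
The plan is to estimate $n_0(d)$ for each $d \in \Delta \cup \{0\}$ separately and take the maximum. Since $t_j$ is non-increasing in $j$, we have $\sum_{j=d+1}^{D} t_j r_j \le t_{d+1}\, R_{>d}$ with $R_{>d} := R - R_d$, yielding the workhorse estimate
\[
n_0(d) \le \cD_d\, 2^{d-1} + (E_d+1)\, t_{d+1}, \qquad E_d := \cD_d + R_{>d} \le \cD.
\]
For the first bound, one replaces each $2^{k-1}$ in $t_{d+1}$ by $2^{D-1}$ to get $t_{d+1}\le 2^{D-1}(\cD-R)$, and combines with $\cD_d\le\cD$ and $2^{d-1}\le 2^{D-1}$ to obtain $n_0(d)+R-1 \le 2^{D-1}\bigl[\cD + (\cD+1)(\cD-R)\bigr] + R - 1$. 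A short algebraic check (splitting $R=1$ from $R\ge 2$ and using $\cD\ge R$) shows this is at most $\cD^2\, 2^{D-1}$; the inequality $\cD^2\le R^2D^2$ is then immediate from $\cD=\sum_k k r_k \le DR$.

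For the second bound the crucial refinement is
\[
t_{d+1} \le (\cD_{>d} - R_{>d})\, 2^{\cD_{>d} - R_{>d}}, \qquad \cD_{>d}:=\cD-\cD_d,
\]
obtained by solving the linear programme of maximising $\sum_{k>d}2^{k-1}(k-1)r_k$ over $r_k\ge 0$ subject to $\sum k r_k=\cD_{>d}$ and $\sum r_k=R_{>d}$: because $k\mapsto 2^{k-1}(k-1)$ is convex, the extremum places $R_{>d}-1$ of the forms at degree $1$ and a single form at degree $\cD_{>d}-R_{>d}+1$. Writing $u:=E_d$ and $v:=\cD_{>d}-R_{>d}$, so that $u+v=\cD$ and $u\ge R$, we deduce $(E_d+1)\, t_{d+1} \le (u+1)\, v\, 2^v$. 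A direct derivative computation shows the function $v\mapsto(\cD-v+1)\, v\, 2^v$ is non-decreasing on $[0,\cD-R]$, giving the clean estimate
\[
(E_d+1)\, t_{d+1} \le (R+1)(\cD - R)\, 2^{\cD - R}.
\]

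It remains to verify $\cD_d\, 2^{d-1} + (R+1)(\cD-R)\, 2^{\cD-R} + R - 1 \le (\cD-1)\, 2^\cD$ for every $d\in\Delta\cup\{0\}$. When $R=1$ (which forces $r_D=1$ and $\cD=D$) this collapses to an equality achieved at $d=0$, which pins down the sharpness of the bound. When $R\ge 2$ one has $D\le \cD-1$, hence $\cD_d\,2^{d-1}\le \cD\,2^{\cD-2}$, and the factor $(R+1)/2^R \le 1$ provides ample slack to absorb the residual $\cD_d\,2^{d-1}+R-1$. The main obstacle is this second bound, which, being tight at $R=1$, leaves no margin for error in the extremal configuration of a single form of top degree.
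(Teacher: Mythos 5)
Your overall strategy is sound and, for the second bound, genuinely different from the paper's. The paper bounds every $t_j$ crudely by $2^{D-1}(\cD-R)$, uses $\cD_d\le E(R-1)$ with $E=\max\{d\in\Delta:d<D\}$, and finishes via $2^{D-1}\cdot 2^{R-1}\cdot 2^{E-1}\le 2^{\cD-1}$; you instead prove the sharper bound $t_{d+1}\le(\cD_{>d}-R_{>d})2^{\cD_{>d}-R_{>d}}$ and feed it into a monotonicity argument, correctly identifying that equality in the second bound occurs exactly for a single form of top degree at $d=0$. That is a nicer route, and your extremal bound is true --- though the linear programme as stated (over all $r_k\ge 0$) is unbounded above unless you first restrict degrees to $[1,B]$ with $B=\cD_{>d}-R_{>d}+1$ and check $B\ge D$ (which holds since $r_D\ge1$); in fact no LP is needed, since $2^{k-1}\le 2^{B-1}$ for all $k\le D\le B$ already gives $t_{d+1}\le 2^{B-1}\sum_{k>d}(k-1)r_k$.

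There are, however, two concrete slips in your endgame. First, for the first bound your intermediate quantity $2^{D-1}[\cD+(\cD+1)(\cD-R)]+R-1$ equals $2^{D-1}(\cD^2+\cD-1)$ when $R=1$, which \emph{exceeds} $\cD^2 2^{D-1}$ for $D\ge2$; the case $R=1$ (a single form, $n_0=(D-1)2^D$) must be checked directly rather than through this chain, exactly as the paper isolates the single-degree case. Second, and more seriously, in the final verification of the second bound the assertion that ``$(R+1)/2^R\le 1$ provides ample slack'' fails: with that constant the slack is $(R-1)2^{\cD}$, which does not absorb $\cD\,2^{\cD-2}$ when $R=2$ and $\cD\ge4$ (e.g.\ a cubic and a quadric give $\cD=5$ and your chain yields $40+96+1=137>128=(\cD-1)2^{\cD}$). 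You need $(R+1)/2^R\le 3/4$ for $R\ge2$ (or a less lossy bound than $\cD_d 2^{d-1}\le\cD\,2^{\cD-2}$, e.g.\ $\cD_d\le\cD-D$ for $d<D$), after which the inequality does close. Both repairs are one line each, so the proof goes through, but as written the final numerical steps do not.
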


Many variants of this are possible.  We have chosen to give an
estimate with a term $R-1$ on the left because there is a significant 
case in which one has $\max B_d\le R-1$, as we shall see below. 

The first bound shows in particular that for any system of $R$ forms of
degrees at most $D$ one has $n_0\ll_D R^2$.  
A result of this type, with a somewhat worse
dependence on $D$, was first proved by Schmidt \cite[Corollary, page 262]{schmidt}.

In order to give more information about the dimensions $B_d$ of
Birch's singular loci we shall now investigate what happens if we
impose a nonsingularity condition. This will also enable us to
describe conditions under which 
the constant $\sigma_{\infty}\prod_p\sigma_p$ is positive
in Theorem \ref{thm:main}. 
 We
shall say that the collection of forms $F_{i,d}$ is a {\em nonsingular system} if
$\rank(J(\x))=R$ for every non-zero $\x\in\QQb^n$ satisfying the
equations
\beql{sys1}
F_{i,d}(\x)=0, \quad  (1\le i\le r_d,\,1\le d\le D),
\eeq
where $J(\x)$ is the $R\times n$ Jacobian matrix defined above.

In order to get good bounds on $B_d$ we replace our system of forms by an
``equivalent optimal system''.  We shall say that two systems
$F_{i,d}$ and $G_{i,d}$ of integral forms (with
$\deg(F_{i,d})=\deg(G_{i,d})=d$) are {\em equivalent} if for every pair
$i,d$ the form $F_{i,d}-G_{i,d}$ is a linear combination 
\[\sum_{j<i}H_{j,d}(\x)F_{j,d}(\x)+     
\sum_{e<d}\sum_{j\le r_e}H_{j,e}(\x)F_{j,e}(\x) \]  
where $H_{j,e}$ is an integral form of degree $d-e$. 
One sees at 
once that this does indeed produce an equivalence relation, and that
the forms $G_{i,d}$ have the same set of zeros as the original system $F_{i,d}$.

We shall prove in Section \ref{s:geom}
that if one has a nonsingular system of forms $\{F_{i,d}\}$, 
then there is an equivalent system $\{G_{i,d}\}$ with the 
property that for any value of $i$ and $d$ the sub-system 
\[\{G_{j,d}:\, j\ge i\} \cup \{G_{j,e}:\, j \le r_e,\, d< e\le D\}\] 
is itself a nonsingular system.  We call such a
system an {\em optimal system}.  For example, if our original
nonsingular system consists of a cubic form $C$ and a quadratic form
$Q$, then there will be a linear form $L$ such that $C+LQ$ is a
nonsingular form.  The pair $\{C+LQ,Q\}$ is then an optimal system.

For an optimal system we shall show in Lemma \ref{lem:improve} that  
\beql{eq:rhul}
B_d\le r_d+\dots+r_D-1, \quad (1\le d\le D).
\eeq
It follows that $\max B_d\le R-1$ for an optimal nonsingular 
system. Since equivalent systems have the same counting function 
$N(P)$ we therefore deduce the following result. 

\begin{theorem}\label{thm:main2}
Suppose we have a nonsingular system of forms such that
$n>(\cD-1)2^{\cD}$.  Then there is a positive $\delta$ such that 
\[N(P)=\sigma_{\infty}\left(\prod_p\sigma_p\right)P^{n-\cD}
+O(P^{n-\cD-\delta}),\]
where $\sigma_{\infty}$ and $\sigma_p$ are the usual local densities,
given by \eqref{eq:local-inf}  and \eqref{eq:local-p}, respectively.
 Moreover
$\sigma_{\infty}$ is positive provided  that the
system of equations \eqref{sys1} has a real solution
in $\mathcal{B}$. 
Similarly $\prod_p\sigma_p$ is positive provided that
for each prime $p$ there is a solution $\x_p\in\ZZ_p^n$ satisfying
$\x_p\equiv\b{m}_0\bmod{M}$.
\end{theorem}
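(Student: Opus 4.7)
The plan is to reduce Theorem \ref{thm:main2} to Corollary \ref{simple} via the equivalence of systems, and then to settle positivity of local densities by standard Jacobian arguments.

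First, given the nonsingular system $\{F_{i,d}\}$, I would invoke the construction alluded to in the paragraph preceding \eqref{eq:rhul}, proved in Section \ref{s:geom}, to produce an equivalent optimal system $\{G_{i,d}\}$. Since equivalent systems cut out the same zero set, the counting function $N(P)$ is unchanged: replacing $F_{j,d}$ by a combination $G_{j,d}=F_{j,d}+\sum H_{k,e}F_{k,e}$ (with $k<j$ when $e=d$, and $e<d$ otherwise) does not alter the vanishing locus. Thus it suffices to prove the asymptotic formula for an optimal system.

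Next, Lemma \ref{lem:improve} gives $B_d \le r_d+\dots+r_D-1$ for each $d\in\Delta$ for an optimal nonsingular system, and in particular $B:=\max\{B_d:d\in\Delta\}\le R-1$. Combining this with the crude bound $n_0+R-1\le (\cD-1)2^{\cD}$ from Theorem \ref{thm:crude} gives
\[B+n_0\le (R-1)+n_0\le (\cD-1)2^{\cD}.\]
Hence the hypothesis $n>(\cD-1)2^\cD$ implies $n>B+n_0$, so Corollary \ref{simple} applies to yield the asymptotic formula with main term $\sigma_\infty\prod_p\sigma_p\, P^{n-\cD}$.

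It remains to verify positivity of the local densities. The density $\sigma_\infty$ is defined in \eqref{eq:local-inf} as a singular integral; under the nonsingularity hypothesis the Jacobian $J(\x)$ has rank $R$ at every nonzero real zero of the system, so any real solution $\x_0\in\cB$ with $F_{i,d}(\x_0)=0$ is smooth on the affine variety $V$. By the implicit function theorem the real zero set is a smooth manifold of dimension $n-R$ near $\x_0$, and a standard weight-function argument (bounding $\sigma_\infty$ below by a local integral against a bump supported near $\x_0$, using a local diffeomorphism to flatten out $V$) gives $\sigma_\infty>0$. For the $p$-adic densities $\sigma_p$, defined via \eqref{eq:local-p} as a limit of the density of solutions modulo $p^k$, I would apply Hensel's lemma: given a $\ZZ_p$-point $\x_p\equiv\b{m}_0\bmod M$ satisfying all the $F_{i,d}$, the rank-$R$ Jacobian condition lets us lift from any sufficiently fine approximation, so the density of solutions mod $p^k$ is bounded below uniformly in $k$. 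The usual compatibility argument of Birch \cite{birch} then shows $\prod_p \sigma_p>0$. The main obstacle in this part is purely bookkeeping: keeping track of the definitions of $\sigma_\infty,\sigma_p$ and the role of the congruence condition $\x\equiv\b{m}_0\bmod M$ when invoking Hensel, but no genuinely new idea beyond what appears in \cite{birch} is needed.
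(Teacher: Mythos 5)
Your proposal is correct and follows essentially the same route as the paper: pass to an equivalent optimal system (which leaves $N(P)$ unchanged), use Lemma \ref{lem:improve} to get $\max B_d\le R-1$, combine with the second bound of Theorem \ref{thm:crude} to verify $n>B+n_0$ so that Corollary \ref{simple} applies, and settle positivity of the local densities by the standard Davenport--Birch arguments, whose details the paper likewise leaves to the reader.
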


We show in Section \ref{sec:major} that the singular series and 
singular integral are absolutely convergent under the conditions 
of Theorem \ref{thm:main}.  Thus standard arguments, such as those
used by Davenport \cite[Chapters 16 \& 17]{DavAM}, show that they are
positive whenever suitable nonsingular local solutions exists. 
The details are left to the reader.

The bound (\ref{eq:rhul}) also enables us to establish the  
following variant of Corollary \ref{cor:DD}.

\begin{corollary}\label{cor:DD+}
For a nonsingular
system consisting of one form of degree $D$ and one of degree $E$,
where $D>E\ge 2$, the conclusion of Theorem \ref{thm:main2} holds whenever
\[n>(2+E)(D-1)2^{D-1}+E2^{E-1}.\]
\end{corollary}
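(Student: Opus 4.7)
The plan is to apply Theorem \ref{thm:main} directly, rather than the cruder Corollary \ref{simple}, so as to exploit the different refined bounds on $B_D$ and $B_E$ available in the optimal nonsingular case. The first step is to replace $\{F_D, F_E\}$ by an equivalent optimal system using the construction promised in Section \ref{s:geom}; this preserves the counting function $N(P)$ since equivalent systems have the same zero set. Applying \eqref{eq:rhul} to the optimal system then yields $B_D \le 0$ (hence $B_D = 0$, as $\0 \in S_D$) and $B_E \le 1$.

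With $\Delta = \{E, D\}$, Theorem \ref{thm:main} has three conditions to verify, at $d = 0, E, D$. A direct computation of $\mathcal{D}_d$ and $s_d$ (with $\mathcal{D}_D = D + E$, $\mathcal{D}_E = E$, $s_D = \frac{(D-1)2^{D-1}}{n}$ and $s_E = \frac{(E-1)2^{E-1}}{n-1} + s_D$, taking the worst case $B_E = 1$) reduces them to the three inequalities
\[
\frac{(D+E)2^{D-1}}{n} < 1, \quad
\frac{E \cdot 2^{E-1}}{n-1} + \frac{(E+2)(D-1)2^{D-1}}{n} < 1, \quad
\frac{(E-1)2^E}{n-1} + \frac{3(D-1)2^{D-1}}{n} < 1,
\]
respectively. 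The first is easy since $(E+2)(D-1) - (D+E) = (D-2)(E+1) \ge 0$, so $n > n_0$ forces $n > (D+E)2^{D-1}$. The third has numerators summing to $n_0(0) = 3(D-1)2^{D-1} + 2(E-1)2^{E-1}$; the paper already shows $n_0(0) < n_0$, so for integer $n \ge n_0 + 1 \ge n_0(0) + 2$ the crude estimate $\frac{a}{n-1} + \frac{b}{n} < \frac{a+b}{n-1}$ settles it.

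The main obstacle is the tight middle condition: its numerators sum to exactly $n_0 = (E+2)(D-1)2^{D-1} + E \cdot 2^{E-1}$, so the crude estimate just fails. Cross-multiplying, the inequality becomes $n^2 - (n_0 + 1) n + (E+2)(D-1)2^{D-1} > 0$; this quadratic takes the strictly positive value $(E+2)(D-1)2^{D-1}$ at $n = n_0 + 1$ and is increasing for $n \ge n_0 + 1$, so it holds for every integer $n > n_0$. Theorem \ref{thm:main} therefore delivers the asymptotic formula for $N(P)$, and positivity of $\sigma_\infty \prod_p \sigma_p$ under the stated local-solvability hypotheses follows from the absolute convergence of the singular series and integral, as discussed after Theorem \ref{thm:main2}.
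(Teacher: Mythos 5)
Your proposal is correct and follows essentially the same route as the paper: reduce to an equivalent optimal system, use \eqref{eq:rhul} to get $B_D=0$, $B_E\le 1$, derive exactly the same three conditions from Theorem \ref{thm:main} for $d=D,E,0$, and observe that the tight $d=E$ condition is just satisfied at $n=n_0+1$ (your cross-multiplied quadratic is the paper's observation that $\frac{\alpha}{n}+\frac{\beta}{n-1}<1$ holds precisely for $n\ge\alpha+\beta+1$). The only cosmetic differences are that the paper disposes of the $d=D$ condition by noting the $d=E$ condition implies it, and compares the $d=0$ and $d=E$ thresholds directly rather than via $n_0(0)<n_0$.
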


In the case of one quadratic and one cubic we find that $n\ge 37$
suffices. This reproduces one of the results from the work of
Browning, Dietmann and Heath-Brown \cite{bdhb}.  However in this
special case one can do better.  Indeed it is shown in
\cite[Theorem 1.3]{bdhb} 
that one can handle smooth intersections of 
one quadratic and one cubic as soon as $n\ge 29$.

To prove the corollary one has merely to interpret the condition of Theorem
\ref{thm:main} subject to the information in 
\eqref{eq:rhul}. One
therefore needs
\[\frac{(D+E)2^{D-1}}{n}<1,\] 
\[\frac{(2+E)(D-1)2^{D-1}}{n}+\frac{E2^{E-1}}{n-1}<1\] 
and
\[\frac{3(D-1)2^{D-1}}{n}+\frac{2(E-1)2^{E-1}}{n-1}<1,\]
corresponding to $d=D,E$ and $0$, respectively. It is easy to see that 
 $(2+E)(D-1)\ge
D+E$ whenever $D>E\ge 2$, so that the second condition implies the
first. In general, if $\alpha$ and $\beta$ are positive 
integers one has
\[\frac{\alpha}{n}+\frac{\beta}{n}<\frac{\alpha}{n}+\frac{\beta}{n-1}
<\frac{\alpha}{n-1}+\frac{\beta}{n-1},\]
so that the inequality
\[\frac{\alpha}{n}+\frac{\beta}{n-1}<1\]
will hold for $n=\alpha+\beta+1$, but not for $n=\alpha+\beta$.
Since
\[(2+E)(D-1)2^{D-1}+E2^{E-1}\ge 3(D-1)2^{D-1}+2(E-1)2^{E-1},\]
we therefore see that the condition in Theorem \ref{thm:main} holds if
and only if 
\[n\geq (2+E)(D-1)2^{D-1}+E2^{E-1}+1,\]
and the result follows.

\medskip

Up to this point we have described our results in terms of zeros of
systems of forms.  We now turn to the related question of rational
points on projective varieties. 
Recall that a  family of projective 
algebraic varieties $X$, each defined over $\QQ$, is 
said to satisfy the {\em Hasse principle} if $X$ has a point over
$\QQ$ 
whenever it has a point over each completion of $\QQ$. If in addition
the set $X(\QQ)$ of $\QQ$-points of $X$ is dense in the ad\`{e}lic
points then we say 
that {\em weak approximation} holds.
When $X$ is Fano (i.e. it is 
a nonsingular projective variety with ample anticanonical bundle $\omega_X^{-1}$)
and  $X(\QQ)$ is dense in $X$ under the Zariski topology,
it is natural to study the counting function
$$
N(U,H,P)=\#\{x\in U(\QQ): H(x)\leq P\},
$$
as  $P\rightarrow \infty$. Here  $U\subseteq X$ is any Zariski open subset 
and  $H$ is  any anticanonical height function on $X$.
The {\em Manin--Peyre conjecture} (see \cite{manin} and \cite{peyre})
predicts the existence of an open subset $U\subseteq X$
such that for any anticanonical height function $H$ on $X$ there is
a (precisely described) constant $c_{U,H}>0$
such that
\begin{equation} \label{con}
	N(U,H,P) \sim c_{U,H} P(\log P)^{\rank \Pic(X)-1}, \quad (P \to \infty).
\end{equation}
We will be interested in this when 
$U=X$
and 
$\Pic(X)\cong \ZZ$.

Any smooth complete intersection in
$\PP^{n-1}$ is the zero-set of a nonsingular system of
forms. Conversely the equations (\ref{sys1}) define a 
variety, $X$ say, in $\PP^{n-1}$. We shall prove in Lemma \ref{lem:NS}
that if one has a 
nonsingular system, then the corresponding variety $X$  is
geometrically integral,  
and indeed the ideal in $\QQb[\x]$ which annihilates $X(\QQb)$
is generated by the forms $F_{i,d}$. In particular $X$ is 
smooth.  Moreover we will show that $X$ has 
codimension $R$ in $\PP^{n-1}$, and that its degree is 
\[\deg(X)=\prod_{d\le D}d^{r_d}.\]

Recall that $X\subseteq\PP^{n-1}$ is said to be non-degenerate if it
is not contained in any proper linear subspace of $\PP^{n-1}$. In
this case we must have $r_1=0$, whence one easily finds that
$\deg(X)\ge\mathcal{D}$. In view of Theorem \ref{thm:main2} we can
therefore handle any smooth 
 non-degenerate complete intersection $X\subseteq\PP^{n-1}$ for which
\beql{n>}
n>(\deg(X)-1)2^{\deg(X)}.
\eeq
We claim that the Hasse principle and weak
approximation hold for such varieties, together with the  Manin--Peyre
conjecture with $U=X$.  
Taking the lower bound $\deg(X)\geq \mathcal{D}\geq 2R$,   the inequality \eqref{n>}  implies that 
 $\dim(X)=n-1-R\geq 3$.
In particular 
 the natural map $\Br(\QQ)\rightarrow \Br(X)$ is an isomorphism  (see 
Proposition~A.1 in Colliot-Th\'el\`ene's  appendix to \cite{PV}), where $\Br(X)=H_{\tiny{\mbox{\'et}}}^2(X,\mathbb{G}_m)$ is the  Brauer group of $X$. Hence this is compatible with the conjecture of Colliot-Th\'el\`ene that the {\em Brauer--Manin obstruction} controls the Hasse principle and weak approximation for the varieties under consideration here (see \cite{bud} for the most general statement of this conjecture).

To see the claim, we observe that the Hasse principle and weak approximation  
follow on choosing $\mathcal{B}$ so that the vectors counted by $N(P)$
lie close to a given real point on $X$ and letting $P$ run through
large positive integers.  For the Manin--Peyre conjecture
with  $U=X$,  we may assume that $X(\QQ)\neq \emptyset$. It follows from 
\cite[\S II, Exercise~8.4]{hart} that
$\omega_X^{-1}=\mathcal{O}(n-\mathcal{D})$ and the inequality 
 \eqref{n>} ensures that   $X$ is Fano. Moreover
  $\Pic X \cong  \ZZ$ by Noether's theorem (see  \cite[Corollary~3.3, page~180]{har70}).
We  work with the height function
$$H(x) = \|\x\|^{n-\mathcal{D}},$$ where   $\|\cdot\|$ is an 
arbitrary norm on $\RR^{n}$, 
on choosing  a representative $x=[\x]$ such that $\x\in \ZZ^n$ is  primitive.
Put  $C=\sup_{\x\in [-1,1]^n}\|\x\|$ and
\[\mathcal{R}=\{\x\in \RR^n: \|\x\|\leq C\}\subseteq [-1,1]^n.\] 
In order to establish \eqref{con}, it turns out that it is 
enough to estimate $N(P)$, with $M=1$ and the box $\mathcal{B}$ 
replaced by the region $\mathcal{R}$. 
In effect one counts integral points of bounded height on the 
universal torsor over $X$. (Note that 
the affine cone over $X$ in $\mathbb{A}^{n}\setminus \{\mathbf{0}\}$
is the unique universal torsor over $X$ up to isomorphism since
$\dim(X)\geq 3$.) 
Although $\mathcal{R}$ is not necessarily a box it can be approximated
arbitrarily closely, both from above and below, by a disjoint union of 
admissible boxes. The 
desired asymptotic formula for  
$N(P)$ now follows from Theorem \ref{thm:main2}.

It has been observed that there are no examples in the literature in
which the Hardy--Littlewood circle method has been used for
varieties which are not complete intersections. Indeed there has been
speculation that the circle method is incapable of handling such
varieties.  Of course, it is not easy to formalize such a claim.

However, one reason that the circle method has been applied only to
complete intersections is that it requires the dimension to be large
relative to the degree, as one sees in Birch's result (\ref{br}) for example.
In contrast, varieties which are not complete intersections tend to
have dimension which is at most of size comparable with the degree.
Indeed Hartshorne \cite{hart-conj} has conjectured that a smooth 
variety $X\subseteq\PP^m$ is a complete intersection as
soon as $\dim(X)>2m/3$.  According to Harris \cite[Corollary~18.12]{harris} 
any variety $X\subseteq\PP^m$ lies in a linear 
subspace of dimension at most $\dim(X)+\deg(X)-1$, and if $X$ is defined
over $\QQ$ we can take the subspace also to be defined over
$\QQ$. Thus in our context we may assume that $m\le\dim(X)+\deg(X)-1$, so that
Hartshorne's conjecture implies that $X$ is a complete intersection as
soon as \[\dim(X)>\frac{2}{3}\left(\dim(X)+\deg(X)-1\right),\]
or equivalently, whenever 
\begin{equation}\label{eq:hartshorne}
\dim(X)\ge 2\deg(X)-1.
\end{equation}
If this were true it would
certainly explain why we have no examples where the circle method has
handled a variety which is not a complete intersection.

Hartshorne's conjecture is still largely wide open. However, it has
been shown by Bertram, Ein and Lazarsfeld 
\cite[Corollary~3]{ein} 
that if $X\subseteq \PP^m$ is smooth
then it is a complete intersection as soon as 
\[\deg(X)\le \frac{m}{2(m-\dim(X))}.\] 
We may assume as above that $m\le \dim(X)+\deg(X)-1$.  Inserting this
information into the above inequality and rearranging
we conclude that $X$ is a complete intersection provided only that
\[\dim(X)>\deg(X)(2\deg(X)-3).\]
This enables us to deduce Theorem \ref{thm:smooth} from Theorem
\ref{thm:main2}. 
We observe
 firstly that the  result is  trivial if $X$ is linear. 
Otherwise, if $X$ is as in Theorem \ref{thm:smooth}, then it 
lies in a minimal linear space $L$ say, defined over $\QQ$. 
If we write 
 $n-1=\dim(L)>\dim (X)$,
then $X$ is a smooth, non-degenerate, geometrically integral  
subvariety of  $L\cong \PP^{n-1}$. 
 Moreover, we have 
$n-1>(\deg(X)-1)2^{\deg(X)}-1$. 
Under the hypothesis of  Theorem~\ref{thm:smooth}, $X$ will 
be a complete intersection, by the result of
Bertram, Ein and Lazarsfeld, since we have
\[(\deg(X)-1)2^{\deg(X)}-1>\deg(X)(2\deg(X)-3)\]
for $\deg(X)\ge 2$.  
Moreover, we shall prove in Lemma \ref{lem:new} that the annihilating
ideal of $X$ is generated  
by integral forms. 
The result  then follows since we have already 
observed that (\ref{n>}) suffices for smooth non-degenerate complete
intersections defined over $\QQ$.

\medskip

We conclude this introduction by discussing 
the extent to which one might relax the conditions of
Theorem~\ref{thm:smooth}.

\begin{con}\label{conjecture}
Let $X\subseteq \PP^m$ be a smooth and geometrically integral variety
defined over $\QQ$. Then $X$ satisfies the Hasse principle and weak
approximation  provided only that $\dim (X)\geq 2 \deg(X)-1$.
Moreover, if $X(\QQ)\neq \emptyset$,  the  Manin--Peyre conjecture
holds with $U=X$. 
\end{con}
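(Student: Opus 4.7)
The plan is to split Conjecture \ref{conjecture} into two largely independent challenges and attack each in turn. First, one would try to establish a complete intersection reduction matching the sharp bound: given a smooth, geometrically integral $X\subseteq\PP^m$ with $\dim(X)\geq 2\deg(X)-1$, Harris's theorem lets us assume $m\leq\dim(X)+\deg(X)-1$, and one would then hope to show that $X$ is a complete intersection in its minimal linear span $L\cong\PP^{n-1}$, cut out by a nonsingular system of forms in the sense of the paper. This conclusion is exactly Hartshorne's conjecture \eqref{eq:hartshorne}, so at present the reduction is conditional on Hartshorne. Since the second step below requires only the complete intersection structure, any quantitative improvement of the Bertram--Ein--Lazarsfeld bound used to deduce Theorem \ref{thm:smooth} would feed directly into progress on Conjecture \ref{conjecture}.

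Second, having reduced to a smooth non-degenerate complete intersection $X\subseteq\PP^{n-1}$ defined by a nonsingular system of forms, one must establish Theorem \ref{thm:main2} under the hypothesis $n\geq 2\deg(X)$ in place of $n>(\cD-1)2^{\cD}$. Because $\deg(X)=\prod d^{r_d}\geq\cD$ for a non-degenerate complete intersection, this is an exponential saving in $\cD$. The major-arc analysis (singular integral, singular series, and positivity of local densities) should carry over verbatim, as should the geometric preparation of an optimal system in Section \ref{s:geom} and the bound \eqref{eq:rhul}. The entire difficulty is concentrated in the minor arcs, where one needs to beat Birch's Weyl-differencing bound by a factor which is exponential in the degree.

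The main obstacle is precisely this minor-arc estimate. Iterating Weyl differencing $D-1$ times roughly squares the effective degree at each stage, producing the $2^{D-1}$ factor in \eqref{br}, and this loss is essentially sharp for \emph{arbitrary} forms of degree $D$. To reach a condition linear in $\deg(X)$ one would need to exploit the smoothness of the underlying projective variety much more aggressively than Birch does, for instance via: (i) Deligne-type cohomological bounds for the exponential sums attached to $X$ itself rather than to a single defining form; (ii) decoupling or Vinogradov-type mean-value inequalities adapted to systems of forms, in the spirit of Bourgain--Demeter--Guth and Wooley; or (iii) a genuine strengthening of the $h$-invariant theory using the refined Schindler--Dietmann invariants $B_d'$ alluded to after \eqref{eq:sig-d}, combined with an amplification argument taking advantage of the full complete intersection structure to feed better bounds on $B_d'$ back into the inductive scheme of Theorem \ref{thm:main}. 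Absent such a breakthrough, Conjecture \ref{conjecture} seems well beyond the reach of the present circle method technology, and in particular cannot be obtained by polishing the argument of the paper alone.
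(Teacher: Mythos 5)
This statement is a conjecture, and neither the paper nor you proves it; your proposal is best read as a justification of why the conjecture is plausible and why it is out of reach, which is exactly how the paper treats it. Your two-step reduction matches the paper's own discussion: first use Harris's linear-span bound together with Hartshorne's conjecture \eqref{eq:hartshorne} to reduce to a smooth non-degenerate complete intersection in $\PP^{n-1}$ with $n>2\deg(X)\geq 2\mathcal{D}$, and second observe that this inequality is precisely the square-root barrier at which one optimistically expects the Hardy--Littlewood asymptotic (and hence Manin--Peyre with $U=X$) to hold. Your diagnosis that the obstruction is entirely in the minor arcs, where the $2^{D-1}$ loss from iterated Weyl differencing must somehow be replaced by a linear dependence on the degree, is also the correct one.

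The one substantive point you omit is that the paper does not rest the Hasse principle and weak approximation part of the conjecture solely on circle-method heuristics. Having reduced (conditionally on Hartshorne) to a Fano complete intersection of dimension at least $3$, the paper invokes the fact that $\Br(\QQ)\rightarrow\Br(X)$ is an isomorphism, so that the Brauer--Manin obstruction is vacuous, and then appeals to Colliot-Th\'el\`ene's conjecture that this obstruction controls the Hasse principle and weak approximation for such varieties. This gives an independent, purely arithmetic-geometric reason to believe the first half of the conjecture, decoupled from any progress on exponential sums; only the Manin--Peyre asymptotic genuinely requires breaching the square-root barrier analytically. Your proposal conflates the two halves by routing everything through the circle method. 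Finally, you might also have recorded, as the paper does, that the bound $\dim(X)\geq 2\deg(X)-1$ is optimal for the Manin--Peyre part, by the Fermat hypersurface example of Hooley and Browning--Loughran.
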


The conclusion of the conjecture is trivial if $\deg(X)=1$ 
and well-known for $\deg(X)=2$ 
and so we may assume that $\deg(X)\geq 3$. 
In particular $\dim (X)\geq 5$. 
In this case the  first part of the conjecture is based on combining
the conjectures of Hartshorne and Colliot-Th\'el\`ene that we
mentioned above. According to the former, the inequality
\eqref{eq:hartshorne} is enough to ensure that any $X$ in the
statement of Conjecture~\ref{conjecture} is a complete intersection in
$L$,  for some linear subspace $L\cong \PP^{n-1}\subset \PP^m$.  
Assuming that $X$ is defined by a system of $R$ equations \eqref{sys},
we deduce   
that  $X$ is Fano since 
\begin{equation}
n>\dim (X)+1 \geq 2\deg(X)\geq 2\mathcal{D}.
\label{eq:gbk}
\end{equation}
Hence Colliot-Th\'el\`ene's conjecture implies that $X$ satisfies the
Hasse principle and weak approximation  
(see \cite[Conjecture 3.2 and Proposition A.1]{PV}).
Finally, the inequality \eqref{eq:gbk} is precisely what arises from 
the  ``square-root barrier'' in the circle method, with the general
expectation then being that  
the usual  Hardy--Littlewood formula ought to hold, provided that  $X$ is smooth
 and geometrically integral. 
As above this would lead to a resolution of the Manin--Peyre
conjecture  with $U=X$.

We close by discussing two examples to illustrate Theorem
\ref{thm:smooth} and Conjecture \ref{conjecture}.     
Suppose that $m=2d-1$ and consider the Fermat hypersurface 
\[X: \quad x_0^d+\dots+x_{d-1}^d=x_d^d+\dots+x_{2d-1}^d\]
in $\mathbb{P}^m$. 
Note that  $X$ contains the $(d-1)$-plane given by the equations 
$$x_i=x_{i+d}, \quad \mbox{for $i=0,\dots,d-1$}.
$$  
It was shown by Hooley \cite{hooley} that this variety has more points
than the circle method leads one to expect.  Indeed it follows from work of 
Browning and  Loughran \cite[Example~3.2]{BL} that there is at least
one choice of anticanonical height
function for which the Manin--Peyre conjecture fails when $U=X$.
This example shows that we cannot 
have a result like
Theorem~\ref{thm:smooth} in which the condition is relaxed to 
$\dim(X) \ge 2\deg(X)-2$. Thus the  lower bound in Conjecture~\ref{conjecture} is optimal, from the point of view of the Manin--Peyre conjecture.

Turning to the question of the Hasse principle, for any $k\in\NN$ we
consider the variety $X\subseteq\PP^{3k+2}$ defined as follows. Let
$C\subseteq\PP^2$ be the curve given by $3x_1^3+4x_2^3+5x_3^3=0$, and
let $\phi:\PP^2\times\PP^k\rightarrow\PP^{3k+2}$ be the Segre
embedding.  Then we take $X$ to be $\phi(C\times\PP^k)$.  It is easy
to see that $X$ fails the Hasse principle since $C$ fails the Hasse principle. 
Moreover $\deg(X)=3(k+1)$, as in Harris \cite[pages 239 \&  
240]{harris}, and $\dim(X)=k+1$.  
Finally $X$ is smooth, as  in 
Hartshorne \cite[Proposition~III.10.1(d)]{hart}. Thus Theorem~\ref{thm:smooth}
would be false if the lower bound on $\dim(X)$ were replaced by
$\tfrac13\deg(X)$. It would be interesting to have examples of the
failure of the Hasse principle in which $\dim(X)$ grows faster than
$\tfrac13\deg(X)$.

\begin{notation}
For any $\al\in \RR$, we will follow common convention and
write $e(\al):=e^{2\pi i\al}$ and $e_q(\al):=e^{2\pi i\al/q}$.  
We will allow all of our implied constants to depend on
$\ve$, in addition  to the forms $F_{i,d}$ and the objects
 $\mathcal{B}$,  $M$ and  $\mathbf{m}_0$ occurring in the definition of
 $N(P)$. 
 We shall write $|\x|$ for the sup-norm of a vector
 $\x\in\CC^n$ and we use $\|\theta\|$ for the distance from a real
 number $\theta$ to the nearest integer.
Finally, we shall often write $\a=(a_{i,d})$ to denote the vector
whose $R$ entries are indexed by $i,d$ satisfying 
$1\leq i\leq r_d$ and $1\leq d\leq D$.
\end{notation}

\begin{ack} 
While working on this paper the first author
was  supported by ERC grant \texttt{306457}. The authors are very
grateful to Julia Brandes,  Daniel Loughran and the anonymous referee for their comments on an
earlier draft of this paper, and to Christopher Frei for pointing out
an error in our original treatment of Lemma \ref{lem:majorA}.
\end{ack}

\section{Overview of the paper}\label{s:2}

The aim of the present section is to present the main ideas in the
proof of Theorem \ref{thm:main}, which is the principal result in this
paper. 
The starting point in the circle method is the  identity
$$
N(P)=\int_{(0,1]^R} S(\bal) \d\bal,
$$
where $\bal=(\alpha_{i,d})$ for $1\leq i\leq r_d$ and $1\leq d\leq D$, and 
$$
S(\bal):=\sum_{\substack{\x\in \ZZ^n\\ \m_0+M\x\in P\mathcal{B}}}
e\left(\sum_{d=1}^D\sum_{i=1}^{r_d}\al_{i,d} F_{i,d}(\m_0+M\x)\right).
$$
The idea is then to divide the region $(0,1]^R$ into a
set of major arcs $\major$ and minor arcs $\minor$. In the usual way
we wish  to prove an asymptotic
formula \begin{equation}\label{eq:major} 
\int_{\major} S(\bal) \d\bal=
\sigma_\infty \left(\prod_p \sigma_p\right) P^{n-\mathcal{D}}+O(P^{n-\mathcal{D}-\delta}),
\end{equation}
for some $\delta>0$, together with a satisfactory bound on the minor arcs
\begin{equation}\label{eq:minor}
\int_{\minor} S(\bal) \d\bal
=O(P^{n-\mathcal{D}-\delta}).
\end{equation}
In the above formula the real density associated to the counting problem 
described by $N(P)$ is defined to be  
\begin{equation}\label{eq:local-inf}
\sigma_\infty:=
\frac{1}{M^n} 
\int_{\RR^{R}} J(\bga) \d\bga,
\end{equation}
where 
\begin{equation}\label{eq:defn-J}
J(\bga):=\int_{\mathcal{B}}
e\left(\sum_{d=1}^D\sum_{i=1}^{r_d}\gamma_{i,d} F_{i,d}(\x)\right)
\d\x. 
\end{equation}
The corresponding $p$-adic density is 
\begin{equation}\label{eq:local-p} 
\sigma_p:=~\lim_{k\rightarrow \infty} p^{-(n-R)k}\mathcal{N}(p^k)
\end{equation}
where
\[\mathcal{N}(q):=\#\left\{\x\in (\ZZ/q\ZZ)^n : 
F_{i,d}(\m_0+M\x)\equiv 0 \bmod{q} ~\forall i,d  \right\}.\]

Let $\varpi\in(0,1/3)$ be a parameter to be decided upon later (see
equation \eqref{eq:choosedelta}). We will take as major arcs
$$
\major:=\bigcup_{q\leq P^{\varpi}\;}
\bigcup_{\substack{\a\bmod{q}\\ 
    \gcd(q,\a)=1}}\major_{q,\a}, 
$$
where $\a=(a_{i,d})$ and 
\beql{majdef}
\major_{q,\a}:=\left\{\bal\bmod{1}: \begin{array}{l}
\left|\alpha_{i,d}-\frac{a_{i,d}}{q}\right|\leq
P^{-d+\varpi} \mbox{ for}\\ 
\mbox{$1\leq i\leq r_d$ and $d\in\Delta$}
\end{array}{}
\right\}. \eeq
We have  $\major_{q,\a}\cap \major_{q',\a'}=\emptyset$
whenever  $\a/q\neq \a'/q'$, provided that $P$ is taken to be
sufficiently large.

The  minor arcs are defined to be $\minor=(0,1]^R\setminus \major$.
Our estimation of $S(\bal)$ for $\bal\in \minor$
is based on a version of Weyl differencing,
which is inspired by the work of Birch \cite{birch}, but which
is specially adapted to systems of forms of differing degree.

For each $d\in\Delta$ let $F_{i,d}(\x_1,\dots,\x_d)$ be the $d$-multilinear
polar form attached to $F_{i,d}(\x)$. After multiplying $F_{i,d}$ by
$d!$ we may assume that $F_{i,d}(\x_1,\dots,\x_d)$ has integer coefficients.
We take
$\un{F}_{i,d}(\x_1,\dots,\x_{d-1})$ to be the row vector for which
\beql{dot}
F_{i,d}(\x_1,\dots,\x_d)=\un{F}_{i,d}(\x_1,\dots,\x_{d-1}).\x_d,
\eeq
and we set \[\widehat{J}_d(\x_1,\dots,\x_{d-1})=
\left(\begin{array}{c}
\un{F}_{1,d}(\x_1,\dots,\x_{d-1})\\ \vdots \\
\un{F}_{r_d,d}(\x_1,\dots,\x_{d-1})\end{array}\right)\]
and
\begin{equation}\label{eq:hat-S}\widehat{S}_d=
\{(\x_1,\dots,\x_{d-1})\in(\AA^{n})^{d-1}:\,
\rank(\widehat{J}_d(\x_1,\dots,\x_{d-1}))<r_d\}.
\end{equation}
Thus $\widehat{S}_d$ is an affine algebraic variety.

Using $D-1$ successive applications of Weyl
differencing,  as in Birch's work, we can relate the size of the exponential sum
$S(\bal)$ to the locus of integral points on the affine variety
$\widehat S_D$. In this way we shall be able to get good control over $S(\bal)$
unless $\alpha_{1,D},\dots,\alpha_{r_D,D}$ all happen to be close to a
rational number with small denominator.
If this occurs then we shall modify the final Weyl squaring,
in a way suggested by the ``$q$-analogue'' of van der Corput's method,
so as to remove 
the effect of the degree $D$ terms. This process is then iterated for
the terms of degrees $d\in\Delta$, in decreasing order,
ultimately obtaining a suitable estimate unless all of the
coefficients $\alpha_{i,d}$ have good rational approximations.

We should comment here on two other approaches to these questions
involving exponential sums. 
Parsell, Prendiville and Wooley \cite{PPW} 
give estimates for general multidimensional sums
based on a multidimensional version of Vinogradov's mean value
theorem. However the bounds obtained save only a small power of $P$
in our notation, whereas our results require a saving in excess of $P^{\cD}$.
Baker \cite[Theorem~5.1]{baker} 
gives a strong result for exponential sums for a
one-variable polynomial, taking account of the Diophantine
approximation properties of all the coefficients. It would be very useful
if such a result were available in our situation.  However, Baker's
proof ultimately depends on estimates for complete exponential sums in
one variable.  Although Baker only requires a relatively weak bound
for such complete sums there appear to be no corresponding estimates
available in the higher-dimensional setting.

Our modified version of Weyl differencing is the subject of
Section~\ref{sec:ES}. We shall apply it in Section \ref{sec:dD} to the
leading forms $F_{1,D}, \dots,F_{r_D,D}$ of degree $D$. The iteration
process is then described in Section \ref{sec:iterate}, producing our 
final bound for the exponential sum $S(\bal)$ in Lemma~\ref{lem:iter}. 
Next, in Section \ref{sec:minor}, we will show how this suffices to
prove \eqref{eq:minor} under the hypothesis in 
the statement of Theorem \ref{thm:main}. To complete the proof of
the theorem
we will establish \eqref{eq:major}
in Section \ref{sec:major}.
We begin with Section \ref{s:geom}, which is 
concerned with 
the facts from algebraic geometry alluded to in the introduction, 
and conclude with Section \ref{sec:crude}, which provides 
the proof of Theorem~\ref{thm:crude}.

\section{Geometric considerations}\label{s:geom}

We commence this section by showing that, 
given any nonsingular system of  forms $\{F_{i,d}\}$, there is 
an equivalent optimal system $\{G_{i,d}\}$.
But an  inspection of the proof of \cite[Lemma~3.1]{bdhb} easily
confirms this fact. Specifically it shows that one can take  
$$
G_{i,d}=F_{i,d}+ \sum_{1\leq k< i}\lambda_k^{(i,d)} F_{k,d} + 
\sum_{\substack{1\leq j\leq n\\  1\leq e< d\\ 1\leq \ell\leq r_e}}
\lambda_{j,\ell,e}^{(i,d)} x_j^{d-e}F_{\ell,e}, $$
for $1\leq i\leq r_d $, $1\leq d\leq D$ and  appropriate integers
$\lambda_k^{(i,d)},\lambda_{j,\ell,e}^{(i,d)}$.

Recall from \eqref{eq:sig-d} that $B_d=\dim(S_d)$, with 
\[S_d=\{\x\in\AA^{n}:\,\rank(J_d(\x))<r_d\}.\]
For an optimal system we can establish the following estimate for
$B_d$, as claimed in \eqref{eq:rhul}.

\begin{lemma}\label{lem:improve}
Suppose that $\{F_{i,d}\}$ is an optimal system of forms.
Let $d\in\Delta$.
Then we have $B_d\le r_d+\dots +r_D-1.$
\end{lemma}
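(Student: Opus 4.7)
The plan is to intersect $S_d$ with a carefully chosen auxiliary variety and apply the projective dimension theorem to the resulting pair of cones through the origin in $\AA^n$. Set
\[\mathcal{G}_i := \{F_{j,d} : j \geq i\} \cup \{F_{j,e} : j \leq r_e, \; d < e \leq D\}.\]
Optimality supplies the nonsingularity of both $\mathcal{G}_1$ and $\mathcal{G}_2$, and this extra leverage, absent in a straight Birch-style argument, is precisely what saves the desired unit in the bound. Let $V_2 := V(\mathcal{G}_2) \subseteq \AA^n$; the nonsingularity of $\mathcal{G}_2$ combined with Krull's principal ideal theorem make $V_2$ a complete intersection of dimension $n - (r_d - 1 + r_{d+1} + \dots + r_D)$, smooth away from $\0$.

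Since $S_d$ and $V_2$ are both cones, once one proves the key disjointness $S_d \cap V_2 \subseteq \{\0\}$, the projectivisations $\PP(S_d)$ and $\PP(V_2)$ are disjoint subvarieties of $\PP^{n-1}$, whence the projective dimension theorem gives $\dim S_d + \dim V_2 \leq n$; substituting $\dim V_2$ rearranges this into $B_d \leq r_d + \dots + r_D - 1$, as required.

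The crux is the disjointness, which I would prove by contradiction. Suppose $\x \neq \0$ lies in $S_d \cap V_2$. As a non-zero common zero of $\mathcal{G}_2$, the nonsingularity of $\mathcal{G}_2$ forces the rows $\nabla F_{2,d}(\x), \ldots, \nabla F_{r_d,d}(\x)$ to be linearly independent. On the other hand $\x \in S_d$ yields a non-zero $(\la_1,\ldots,\la_{r_d})$ with $\sum_i \la_i \nabla F_{i,d}(\x) = \0$, and independence of the later rows forces $\la_1 \neq 0$. Applying Euler's identity to $F_\bla := \sum_i \la_i F_{i,d}$ delivers $F_\bla(\x) = 0$, and combined with $F_{j,d}(\x) = 0$ for $j \geq 2$ this entails $F_{1,d}(\x) = 0$. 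Thus $\x$ is a non-zero common zero of $\mathcal{G}_1$; but $\nabla F_{1,d}(\x)$ now lies in the span of the remaining degree-$d$ gradients, so the Jacobian of $\mathcal{G}_1$ at $\x$ has rank at most $r_d - 1 + r_{d+1} + \dots + r_D$, strictly less than $|\mathcal{G}_1|$, contradicting the nonsingularity of $\mathcal{G}_1$.

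The hard part is spotting the right auxiliary variety: intersecting $S_d$ directly with $V(\mathcal{G}_1)$ by the same cone argument only yields $\dim S_d \leq r_d + \dots + r_D$, one unit shy of the target. Stripping $F_{1,d}$ out and working with $\mathcal{G}_2$ instead — with Euler's identity providing the bridge between a gradient relation and an identity on the forms themselves — is precisely what allows one to force $F_{1,d}(\x) = 0$ and then invoke the stronger nonsingularity of $\mathcal{G}_1$ to save the extra unit.
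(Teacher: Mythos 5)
Your proposal is correct and follows essentially the same route as the paper: the auxiliary variety cut out by $\{F_{j,d}:j\ge 2\}\cup\{F_{j,e}: d<e\le D\}$ is exactly the paper's $\tilde W_d$, the disjointness from $S_d$ is proved by the same Euler-identity argument exploiting the nonsingularity of both sub-systems (your derivation of $\la_1\neq 0$ from the independence of the later gradients is just a repackaging of the paper's two-case analysis), and the final bound comes from the same projective dimension theorem applied to the disjoint projectivised cones. No substantive difference.
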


\begin{proof}
In what follows let us write $R_d:=r_d+\dots +r_D$. It will be
convenient to work projectively. Let 
 $d\in \Delta$ and put
\[T_d:=\{[\x]\in\PP^{n-1}:\,\rank(J_d(\x))<r_d\}.\] 
In order to establish the lemma it  suffices to show that $\dim 
T_d\leq R_d-2$.

We introduce the varieties  $V_d, \tilde V_d\subseteq \PP^{n-1}$,
given by 
\[V_d: \quad F_{1,d}=\dots=F_{r_d,d}=0\] 
and 
\[\tilde V_d: \quad F_{2,d}=\dots=F_{r_d,d}=0.\]
Note that only $r_d-1$ forms appear in the definition of $\tilde V_d$.
Since $\{F_{i,d}\}$ is an optimal  system it follows that the varieties
$$
W_d=V_D \cap\dots \cap V_d \quad \mbox{and} \quad
\tilde W_d=V_D\cap\dots \cap V_{d+1}\cap \tilde V_d
$$
are smooth. Note that
$\tilde W_d$ has codimension at most 
$$  r_d-1+r_{d+1}+\dots+r_D=R_d-1  $$ 
in $\PP^{n-1}$, since $r_d\geq 1$.

We are now ready to estimate the dimension of $T_d$.
To do so we note that  $ T_d$ is the set of $[\x]\in\PP^{n-1}$ for
which there exists a point $[\la_1,\dots, \la_{r_d}]\in \PP^{r_d-1}$
such that 
\begin{equation}\label{eq:prop}
\la_1
\nabla F_{1,d}(\x)+\dots +
\la_{r_d}
\nabla F_{r_d,d}(\x)=\mathbf{0}.
\end{equation}
Consider the intersection $I_d= T_d\cap\tilde W_d$.
We claim that $I_d$ is empty.
Any point  $[\x]\in I_d$ for which \eqref{eq:prop} occurs with $\lambda_1\neq 0$
must have $F_{1,d}(\x)=0$, by Euler's identity. But then $[\x]$ must
be a point   in $W_d$ for which the matrix 
\[\left(\begin{array}{c}J_{r_d}(\x)\\ \vdots \\
J_D(\x)\end{array}\right)\]
has rank strictly less than $R_d$. This contradicts the fact that
$W_d$ is smooth. 
Alternatively, any point  $[\x]\in I_d$ for which \eqref{eq:prop}
occurs with $\lambda_1= 0$ 
must produce a singular point on $\tilde W_d$, which is also impossible.
This shows that $I_d$ is empty, whence 
\[ \dim(T_d)<\codim(\tilde W_d)\le R_d-1.\] 
This concludes the proof of the lemma.
\end{proof}

Our  remaining results deal with complete 
intersections. Recall that a variety $X\subseteq \PP^{n-1}$ of
codimension $R$ is said 
to be a complete intersection if its annihilating ideal is generated by $R$
forms. The following result shows that any nonsingular system of forms
produces a  smooth complete intersection of the appropriate degree, 
which is geometrically integral.

\begin{lemma}\label{lem:NS}
Let $\{F_1,\dots, F_R\}$ be a nonsingular system of integral forms,
defining a variety $X$ in $\PP^{n-1}$. Then the annihilating  ideal of
$X$ is generated by $\{F_1,\dots, F_R\}$, 
and $X$ is a smooth complete intersection of codimension
$R$. Moreover, $X$ is geometrically integral and has degree 
$$
\deg(X)=\deg(F_1)\dots \deg(F_R).
$$
\end{lemma}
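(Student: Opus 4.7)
My plan proceeds in four stages, corresponding to the four claims in the statement.

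First, I would apply the Jacobian criterion to the affine cone $\hat{X}=\{F_1=\cdots=F_R=0\}\subseteq \AA^n$. The nonsingularity hypothesis says exactly that $\rank(J(\x))=R$ at every non-zero $\x\in\hat{X}(\QQb)$. Standard commutative algebra (Krull's height theorem) gives $\codim(\hat{X})\le R$, while the existence of a full-rank minor of $J$ at any smooth point of the cone forces the local codimension to be exactly $R$ there. Combined with the Jacobian criterion this shows $\hat{X}\setminus\{\0\}$ is smooth of pure codimension $R$ in $\AA^n$, and hence $X\subseteq \PP^{n-1}$ is smooth of pure codimension $R$.

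Second, to show that $(F_1,\dots,F_R)$ generates the full annihilating ideal (and that $X$ is a complete intersection in the strict sense), I would argue as follows. Because $\hat{X}$ has codimension exactly $R$ in $\AA^n$, and is cut out by $R$ hypersurfaces in the Cohen--Macaulay ring $\QQb[x_1,\dots,x_n]$, the sequence $F_1,\dots,F_R$ is a regular sequence. The ideal $(F_1,\dots,F_R)$ is therefore unmixed of pure height $R$. Smoothness of $\hat{X}\setminus\{\0\}$ implies reducedness in codimension $0$, which combined with the Cohen--Macaulay property and the Serre criterion $(R_0)+(S_1)$ shows that $(F_1,\dots,F_R)$ is already radical. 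It hence coincides with the annihilating ideal of $X(\QQb)$, and $X$ is a complete intersection of codimension $R$.

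Third, for geometric integrality I would invoke the connectedness theorem of Fulton--Hansen (or Hartshorne's classical connectedness theorem for complete intersections): a complete intersection in $\PP^{n-1}_{\QQb}$ of positive dimension is connected. Since $X$ is smooth it is, in particular, a disjoint union of its irreducible components over $\QQb$, so connectedness upgrades to irreducibility. Combined with reducedness (from smoothness), this yields that $X$ is integral over $\QQb$, i.e.\ geometrically integral. (The case $\dim X=0$ may be handled separately or is excluded in every application in the paper, since there $n\gg R$.) Finally, for the degree, since $X$ is a complete intersection cut out by forms of degrees $\deg(F_i)$, Bezout's theorem for complete intersections yields $\deg(X)=\prod_{i=1}^R\deg(F_i)$.

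The only genuinely delicate step is the second one, namely passing from the set-theoretic statement ``$X$ is smooth of codimension $R$'' to the ideal-theoretic statement ``$(F_1,\dots,F_R)$ is the annihilating ideal''. This is where one must verify that $F_1,\dots,F_R$ form a regular sequence and that the resulting unmixed ideal is already radical; everything else (smoothness, connectedness, Bezout) is a routine consequence of the Jacobian hypothesis once this ideal-theoretic fact is in place.
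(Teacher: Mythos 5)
Your proposal is correct and follows essentially the same route as the paper: the paper delegates your second step (codimension exactly $R$, regular sequence via unmixedness, and identification of $(F_1,\dots,F_R)$ with the annihilating ideal) to Hartshorne's Exercise~II.8.4, establishes geometric integrality by the same combination of smoothness (local rings are domains) and connectedness of positive-dimensional complete intersections, and computes the degree from the Koszul resolution, which is exactly the ``Bezout for complete intersections'' you invoke. Your explicit treatment of reducedness via Serre's criterion $(R_0)+(S_1)$ fills in a point the paper leaves implicit, so no gap remains.
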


\begin{proof}
It follows
from
\cite[Exercise~II.8.4]{hart} 
that $X$ is a complete intersection (as a
scheme) of codimension $R$, whose annihilating ideal is generated by
$\{ F_1,\dots, F_R\}$.  The smoothness of $X$ follows from the fact 
that  the system of forms $\{ F_1,\dots, F_R\}$ is nonsingular.

Now the local rings of any smooth scheme are regular. Moreover, a
regular local ring is an integral domain. Thus every local ring of a
smooth scheme must be an integral domain. Moreover, $X$
 is connected by
 \cite[Exercise III.5.5]{hart}. 
 It follows that $X$ is
 geometrically reduced and irreducible, as required. Indeed,
if it failed to be 
 geometrically integral, then it would have  two components 
with a non-empty intersection, since $X$ is connected. But this  is impossible  
since the local ring of any point lying in the intersection would not be an 
integral domain.

Let $d_i=\deg F_i$, for $1\leq i\leq R$. Since
 $X$ is a complete intersection of codimension $R$ in $\PP^{n-1}$, the
 degree of $X$ can be computed via its Hilbert polynomial. Now
 $\{F_1,\dots,F_R\}$ forms a ``regular sequence'' of homogeneous
 elements of $\QQ[\x]$, since 
 $X$ is a complete intersection .
According to
 Harris \cite[Example~13.16]{harris}, the Koszul complex associated to
 the regular sequence $\{F_1,\dots, F_R\}$ is a free 
resolution of the coordinate ring $\QQ[\x]/(F_1,\dots,F_R)$. This
enables us to compute the Hilbert polynomial of $X$ 
and we find that it has  $d_1\dots d_R/(n+1-R)!$
for its leading coefficient.
 Hence
  $\deg(X)=d_1\dots d_R$, as claimed. 
  \end{proof}

Our final result in this section shows that any complete intersection
which is globally defined over $\QQ$ is cut out by integral forms.

\begin{lemma}\label{lem:new}
Let  $X$ be a smooth complete intersection of codimension $R$ which is
globally defined over $\QQ$.  
Then there exist forms $F_1,\dots,F_R$, with  coefficients in $\ZZ$, such that 
the annihilating ideal of $X$  is generated by
$\{ F_1,\dots, F_R\}$. 
\end{lemma}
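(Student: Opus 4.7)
The plan is to combine Galois descent with a graded version of Nakayama's lemma. Since $X$ is globally defined over $\QQ$, its annihilating ideal $I\subseteq \QQb[\x]$ arises by extension of scalars from its $\QQ$-form $I_\QQ:=I\cap \QQ[\x]$; more precisely, faithful flatness of $\QQb$ over $\QQ$ gives $I=I_\QQ\otimes_\QQ \QQb$. The goal is to exhibit a minimal homogeneous generating set of $I_\QQ$ of cardinality $R$, after which clearing denominators finishes the proof.

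To this end, let $\mathfrak{m}:=(x_1,\dots,x_n)$ denote the irrelevant ideal in $\QQ[\x]$, and consider the graded $\QQ$-vector space $I_\QQ/\mathfrak{m}I_\QQ$. Since $\mathfrak{m}$ is defined over $\QQ$, tensoring with $\QQb$ yields
\[I/\mathfrak{m}I = (I_\QQ/\mathfrak{m}I_\QQ)\otimes_\QQ \QQb,\]
so these two graded vector spaces have matching dimensions in each degree. By the graded version of Nakayama's lemma, these dimensions record the minimal number of homogeneous generators of $I$ and $I_\QQ$ in each degree. Since $X$ is a complete intersection of codimension $R$, the minimal number of generators of $I$ over $\QQb[\x]$ is exactly $R$, so the same must be true of $I_\QQ$ over $\QQ[\x]$.

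Lifting any homogeneous $\QQ$-basis of $I_\QQ/\mathfrak{m}I_\QQ$ to homogeneous elements of $I_\QQ$ therefore produces, by graded Nakayama, forms $F_1',\dots,F_R' \in \QQ[\x]$ generating $I_\QQ$. Multiplying each $F_i'$ by a suitable nonzero rational integer clears denominators and yields integral forms $F_1,\dots,F_R \in \ZZ[\x]$ generating the same $\QQ[\x]$-ideal $I_\QQ$; after base change these also generate $I\subseteq \QQb[\x]$, which is what is required.

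The only genuinely substantive point is the middle step, namely that the minimal number of homogeneous generators of $I$ is insensitive to base change from $\QQ$ to $\QQb$. But this is a clean consequence of graded Nakayama and faithful flatness, so no serious obstacle arises. A minor variant, avoiding Nakayama, would be to observe that the Hilbert function of $\QQ[\x]/I_\QQ$ agrees with that of $\QQb[\x]/I$ and use the theory of minimal free resolutions over the graded polynomial ring to read off that both ideals have identical graded Betti numbers; this once again forces $I_\QQ$ to have $R$ generators.
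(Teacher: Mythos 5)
Your proof is correct, but it follows a genuinely different route from the paper's. The paper argues by induction on the generators: it starts from a generating set $F_1,\dots,F_R$ of $\ann(X)$ over $\QQb$ and replaces the $F_i$ one at a time by $\QQ$-rational forms, using that $F_{r+1}^\sigma\in\ann(X)$ for all $\sigma\in\Gal(\QQb/\QQ)$ and taking traces $\mathrm{Tr}_{K/\QQ}(cF_{r+1})$ for suitable $c$, together with an exchange argument to keep a generating set of size $R$ at each step. You instead descend the whole ideal at once, setting $I_\QQ=I\cap\QQ[\x]$ with $I=I_\QQ\otimes_\QQ\QQb$, and then transfer the minimal number of homogeneous generators across the field extension via graded Nakayama, using Krull's height theorem to pin that number at exactly $R$. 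Your route is cleaner and buys a little more: it shows at once that the degrees of a minimal homogeneous generating set are the same over $\QQ$ as over $\QQb$ (the degree-zero graded Betti numbers are insensitive to the base change), whereas the paper has to track degrees through the exchange step. The paper's argument, on the other hand, is elementary and constructive, using only linear algebra and Galois theory. One small point of precision in your write-up: the identity $I=I_\QQ\otimes_\QQ\QQb$ is not a consequence of faithful flatness alone for an arbitrary ideal $I\subseteq\QQb[\x]$; it is exactly the content of $X$ being globally defined over $\QQ$ (Galois-stability of $I$ plus Galois descent, or, equivalently, the existence of a $\QQ$-form of the closed subscheme), after which flatness identifies the descended ideal with $I\cap\QQ[\x]$. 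Since $X$ is smooth, hence reduced, this stability of the radical ideal does follow from the hypothesis, so the step is sound once stated this way.
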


\begin{proof}
Suppose that $X\subset \PP^{n-1}$ is defined by a system of $R$ equations \eqref{sys}.
We claim that there exist forms $G_i\in \QQ[x_1,\dots,x_n]$ such that 
$\deg(F_i)=\deg(G_i)$, for $1\leq i\leq R$, and such that the annihilating ideal of $X$  is generated by
$\{ G_1,\dots, G_R\}$.  This will establish the lemma on rescaling the forms appropriately.

Let $\deg(F_i)=d_i$ for  $d_1\le\dots \le d_R$. The annihilating ideal of $X$ is 
$\ann(X) :=\langle F_1,\dots ,F_R\rangle$.  
We will argue by induction, the claim being obvious in the case $R=1$ of hypersurfaces. We suppose 
that 
we have found $G_1,\dots,G_r$ with $\ann(X)=\langle G_1,\dots,G_r,F_{r+1},\dots,F_R\rangle.$  Since $X$ is defined over $\QQ$ and 
$F_{r+1}\in \ann(X)$ we must have $F_{r+1}^\sigma \in \ann(X)$
for every  $\sigma\in \mathrm{Gal}(\bar\QQ/\QQ)$.
Thus 
$$
F_{r+1}^\sigma \in \langle G_1,...,G_r,F_{r+1},...,F_R\rangle
$$
for any $\sigma$,
whence
$$
\mathrm{Tr}_{K/\QQ}(cF_{r+1}) \in \langle G_1,...,G_r,F_{r+1},...,F_R\rangle
$$
for any $c \in \bar\QQ$,  where $K$ is the field of definition of $cF_{r+1}$.  
We choose $c$ such that 
$\mathrm{Tr}_{K/\QQ}(cF_{r+1})$ is non-zero and call it 
$G_{r+1}$, so that it has the correct degree.  Thus there exists forms $H_i$ defined over $\bar\QQ$ and constants $e_i\in \bar\QQ$ such that 
\begin{equation}\label{eq:anna}
G_{r+1}=G_1H_1+\dots+G_rH_r+ \sum_{\substack{i}
} e_iF_i,
\end{equation}
where the sum is only for those $i$ 
for which 
$r+1\leq i\leq R$ and 
$d_i=d_{r+1}$.  If there is any choice of $c$ for which one of the $e_i$ is non-zero we can use \eqref{eq:anna} to swap $G_{r+1}$ for the corresponding $F_i$ in the basis 
$\langle G_1,\dots,G_r,F_{r+1},\dots,F_R\rangle$ of $\ann(X)$, thereby completing the induction step.  Alternatively,  if we just have $G_{r+1} \in \langle G_1,\dots ,G_r\rangle$, irrespective of the choice of $c$, then $F_{r+1} \in \langle G_1,\dots,G_r\rangle$, which is impossible.
\end{proof}

\section{Exponential sums}\label{sec:ES}

In this section we consider a quite general situation, independent of
the setup described in Section \ref{s:2}.
Let \[f(x_1,\dots,x_n),g(x_1,\dots,x_n)\in\RR[x_1,\dots,x_n] \]
be polynomials, and let $P\ge 1$ be given.  Suppose that $f$ has
degree at most $d$, and let $F$ be the leading form of degree $d$.
(We shall 
not rule out the possibility that $F$ vanishes identically.) 
We write $F(\x_1,\ldots,\x_d)$ for the $d$-linear polar form, and we 
put $F(\x_1,\dots,\x_d)=\un{F}(\x_1,\dots,\x_{d-1}).\x_d$ 
in analogy to \eqref{dot}. We then take $F^{(i)}$ to be the 
$i$-th component of the row vector 
$\underline{F}(\x_1,\dots,\x_{d-1})$. 

Suppose also that $g$ takes the shape 
\[g=q^{-1}g_1+g_2, \quad   \mbox{with $q\in\NN$ and $
g_1\in\ZZ[x_1,\dots,x_n]$,}
\]
where $g_2$ is a polynomial over $\RR$ satisfying
\beql{partials}
\frac{\partial^{i_1+\dots+i_n}}{\partial^{i_1}x_1\dots\partial^{i_n}x_n}
\,g_2(x_1,\dots,x_n)\ll_{i_i,\dots,i_n}\phi P^{-i_1-\dots-i_n},
\eeq
for some parameter $\phi\ge 1$, uniformly on 
$[-P,P]^n$. 

We give ourselves an  $n$-dimensional box  
$\mathcal{B}\,'\subseteq [-P,P]^n$, with 
sides aligned to the coordinate axes.
We then proceed to consider the exponential sum
\[\Sigma:=\sum_{\x\in \mathcal{B}\,'} e(f(\x)+g(\x)),\]
in which $f$ is the polynomial which mainly concerns us, and $g$ is
regarded as an inconvenient perturbation.
Our estimate for $\Sigma$ will be expressed in terms of
the number $L\ll 1$ defined by 
\[|\Sigma|=P^nL.\]
We now proceed to establish the following bound.

\begin{lemma}\label{lem:final2}
Let $K\geq 1$. Then we have $$
L^{2^{d-1}}\ll P^{-(d-1)n}(q\phi K)^{(d-1)n}(\log P)^n\cM,
$$
where
 $\cM$ counts $(d-1)$-tuples of integer vectors
$(\x_1,\dots,\x_{d-1})$ satisfying
\[|\x_i|<\frac{P}{q\phi K}, \quad (1\le i\le d-1),\]
such that
\[
\|qF^{(i)}(\x_1,\dots,\x_{d-1})\|\le
\frac{1}{P(q\phi)^{d-2}K^{d-1}}, \quad  (1\le i\le n).\]
\end{lemma}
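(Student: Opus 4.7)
My plan is to iterate Weyl differencing $d-1$ times, reducing the degree-$d$ polynomial $f$ to a linear one in $\x$, while handling the perturbation $g=q^{-1}g_1+g_2$ by a ``$q$-analogue of van der Corput's method'': the integral part $g_1/q$ is dealt with at the end by splitting the resulting linear exponential sum into arithmetic progressions modulo $q$ in each coordinate, and the real part $g_2$ is controlled by iterating the derivative bound (\ref{partials}). For a shift parameter $H\asymp P/(q\phi K)$, each Weyl step consists of the identity $(2H)^n\Sigma=\sum_{\x}\sum_{|\h|<H}e((f+g)(\x+\h))+O(\text{boundary})$, followed by Cauchy--Schwarz. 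Iterating $d-1$ times yields a bound of the form
\[
|\Sigma|^{2^{d-1}}\ll P^{An}H^{-Bn}\sum_{|\h_j|<H}\big|T_{d-1}(\h_1,\dots,\h_{d-1})\big|,
\]
with $A,B$ depending only on $d$, where $T_{d-1}(\h)=\sum_{\x}e(\Delta_{\h_{d-1}}\cdots\Delta_{\h_1}(f+g)(\x))$ and $\x$ ranges over a suitable translate of $\mathcal{B}\,'$.

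The argument of the exponential inside $T_{d-1}$ is a linear polynomial in $\x$ of the form
\[
d!\,F(\h_1,\dots,\h_{d-1},\x)+q^{-1}L_1(\x)+L_2(\x)+c(\h),
\]
where $L_1\in\ZZ[\x]$ (differences of integer polynomials remain integer polynomials) and $L_2$ satisfies a rescaled version of (\ref{partials}) with $\phi$ replaced by $\phi(H/P)^{d-1}$. Writing $\x=q\y+\mathbf{r}$ on each residue class modulo $q$, the contribution $e(q^{-1}L_1(q\y+\mathbf{r}))$ reduces to a root of unity depending only on $\mathbf{r}$, since $q^{-1}(L_1(q\y+\mathbf{r})-L_1(\mathbf{r}))\in\ZZ[\y]$. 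Summing the remaining linear phase in $\y$ by the geometric series bound then gives
\[
|T_{d-1}(\h)|\ll q^n\prod_{i=1}^n\min\!\Big(\tfrac{P}{q},\,\|q\,d!\,F^{(i)}(\h_1,\dots,\h_{d-1})+\epsilon_i\|^{-1}\Big),
\]
with $|\epsilon_i|\ll q\phi(H/P)^{d-1}$ arising from $L_2$.

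The remaining sum over $(\h_1,\dots,\h_{d-1})$ reduces to counting integer tuples in the box $|\h_i|<H$ for which $\|q\,d!\,F^{(i)}(\h)\|$ is small in each coordinate. A standard Davenport--Birch shrinking argument relates the weighted sum of these $\min$-functions to $\cM$, namely the count of $(\h_1,\dots,\h_{d-1})$ in a (possibly smaller) box satisfying a (possibly sharper) approximation; the free parameter $K$ enters here as the ``shrinking ratio'' that calibrates this trade-off so that the final count is precisely $\cM$ as defined in the statement. Collating the various $P$, $q$, $\phi$ and $K$ factors from the Weyl iteration, the $q$-spacing, and the shrinking step then produces the claimed bound.

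The main obstacle I expect is the delicate bookkeeping of the interplay between $q$, $\phi$, $H$ and $K$. In particular, one must verify that with $H\asymp P/(q\phi K)$ the perturbation $\epsilon_i$ coming from $L_2$ lies comfortably inside the approximation quality $1/(P(q\phi)^{d-2}K^{d-1})$ defining $\cM$, and that the accumulated powers of $P$, $q$, $\phi$ and $K$ from the three steps balance out exactly to produce the $P^{-(d-1)n}(q\phi K)^{(d-1)n}$ factor in the statement.
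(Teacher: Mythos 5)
Your high-level ingredients (Weyl differencing, a $q$-adapted device to kill $q^{-1}g_1$, partial summation for $g_2$, and a Davenport-style shrinking step in which $K$ enters) are the right ones, but the architecture you propose does not make the exponents balance, and the obstacle you flag at the end is in fact fatal as the argument stands. There are two intertwined problems. First, taking \emph{all} $d-1$ differencing shifts of length $H\asymp P/(q\phi K)$ already spends the entire budget: the iterated Cauchy--Schwarz gives $|\Sigma|^{2^{d-1}}\ll P^{(2^{d-1}-1)n}H^{-(d-1)n}\sum_{|\h_i|<H}|T(\h)|$, so after the Davenport $\min$-to-count conversion you arrive at exactly $L^{2^{d-1}}\ll P^{-(d-1)n}(q\phi K)^{(d-1)n}(\log P)^n\mathcal{N}'$, where $\mathcal{N}'$ counts tuples in the correct box but with the approximation condition your final linear sum actually detects. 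Second, that condition is only $\|qF^{(i)}(\h)\|\le q/P$: splitting the last sum into the $q^n$ residue classes $\x=q\y+\mathbf{r}$ shortens the summation range of $\y$ to $P/q$, so the geometric series yields $\prod_i\min(P/q,\|qF^{(i)}(\h)\|^{-1})$ and nothing finer than scale $q/P$ can be detected. Since $\mathcal{M}$ requires the much sharper condition $\|qF^{(i)}\|\le 1/(P(q\phi)^{d-2}K^{d-1})$ in the \emph{same} box, one has $\mathcal{N}'\ge\mathcal{M}$ with no control in the other direction, and Davenport's shrinking lemma cannot close the gap for free: sharpening the approximation by a factor $\rho$ costs a factor $\rho^n$ per vector (and shrinks the box as well), which would introduce an extra factor of roughly $\bigl(q(q\phi)^{d-2}K^{d-1}\bigr)^{(d-1)n}$ that is absent from the statement. (A smaller point: the $(d-1)$-fold difference of $f+g$ is not linear in $\x$, since $g$ may have degree exceeding $d$; this part is repairable, as your residue-class identity and partial summation do not actually need linearity.)

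The paper threads the needle differently. It performs $d-2$ \emph{full-length} differences, and only the last step is modified: one shifts by $q\b{u}$ with $1\le\b{u}\le H$ and $H=[P/(q\phi)]$ (no $K$ here). Because the shift is a multiple of $q$, the multilinearity transfers the factor $q$ into the phase, $F(\x_1,\dots,\x_{d-2},q\w,\y)=qF(\x_1,\dots,\x_{d-2},\w,\y)$, while the final summation variable $\y$ still ranges over a box of side $P$; hence the detection threshold is $\|qF^{(i)}\|\le 1/P$, not $q/P$, and $\Delta_{q\w}(q^{-1}g_1)$ is exactly integer-valued so $g_1$ disappears with no $q^n$ loss. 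The bound $qH\le P/\phi$ is also precisely what makes the differenced $g_2$ have $O(1)$ total variation, so partial summation is free. Only \emph{then} is the shrinking lemma applied, once to each of $\x_1,\dots,\x_{d-2}$ (ratio $q\phi K$ each) and once to $\w$ (ratio $K$), shrinking the large boxes down to $P/(q\phi K)$ and sharpening the threshold to that of $\mathcal{M}$ at total cost $(q\phi)^{(d-2)n}K^{(d-1)n}$; combined with the $(q\phi)^n$ from the single short step this gives exactly $(q\phi K)^{(d-1)n}$. To repair your write-up you would need to adopt (or reproduce) this asymmetry: keep the early shifts long so that the shrinking lemma has room to operate, and make the final shift divisible by $q$ rather than decomposing the summation variable into residue classes.
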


Notice that $\cM\ge 1$ since the $(d-1)$-tuple
$(\mathbf{0},\dots,\mathbf{0})$ is always counted.  The conclusion of
the lemma is therefore trivial unless
$$
q\phi\le P,
$$
as we henceforth suppose.

We start our argument by using $d-2$ standard Weyl differencing steps, to give
\beql{stW}
L^{2^{d-2}}\ll P^{-(d-1)n}\sum_{|\x_1|<P}\dots\sum_{|\x_{d-2}|<P}
\left|\sum_{\x\in I}\psi(\x)\right|,
\eeq
with
\[\psi(\x)=e\left(\Delta_{\x_1,\dots,\x_{d-2}}(f+g)(\x)\right),\]
and where 
$I\subseteq [-P,P]^n$ 
is a box with sides parallel to the
coordinate axes, depending on $\x_1,\dots,\x_{d-2}$. Here
$\Delta_{\x_1,\dots,\x_{d-2}}$ is the usual forward-difference operator.
Normally, since
$f$ potentially has degree $d$, one would want to perform $d-1$ Weyl
differencing steps. 
However we will modify the final step in a way suggested by the van der
Corput argument, and by its $q$-analogue. This will enable us to
eliminate the effect of the polynomial $g$.

We now set
\beql{H}
H=\left[\frac{P}{q\phi}\right],
\eeq
whence $qH\le P/\phi\le P$.  We then have
\[\sum_{\x\in I}\psi(\x)=\sum_{\x\in \ZZ^n}\psi(\x)\chi_I(\x)\]
where $\chi_I$ is the indicator function for $I$, and hence
\begin{align*}
H^n\sum_{\x\in I}\psi(\x)&=
\sum_{\;1\b{u}\le H\;}\sum_{\x\in\ZZ^n}\psi(\x+q\b{u})\chi_I(\x+q\b{u})\\
&=\sum_{\;|\x|\le 2P\;}
\sum_{1\le\b{u}\le H}\psi(\x+q\b{u})\chi_I(\x+q\b{u}),
\end{align*}
where the notation $1\le\b{u}\le H$ is short for 
$1\le u_1,\ldots,u_n\le H$.
Here we have used the fact that $qH\le P$ in order to bound
$|\x|$. Cauchy's inequality now yields 
\begin{align*}
H^{2n}&\left|\sum_{\x\in I}\psi(\x)\right|^2\\
&\ll
P^n\sum_{|\x|\le 2P 
}\left|\sum_{1\le\b{u}\le H}
\psi(\x+q\b{u})\chi_I(\x+q\b{u})\right|^2\\
&=P^n\sum_{1\le\b{u},\b{v}\le H}\;
\sum_{\x\in\ZZ^n}\psi(\x+q\b{v})\chi_I(\x+q\b{v})
\overline{\psi(\x+q\b{u})}\chi_I(\x+q\b{u})\\
&= P^n\sum_{|\b{w}|<H}n(\w)
\sum_{\y\in\ZZ^n}\psi(\y+q\b{w})\chi_I(\y+q\b{w})
\overline{\psi(\y)}\chi_I(\y),
\end{align*}
where
\[n(\w)=\#\{(\u,\v)\in \ZZ^n\cap (0,H]^{2n}:\,\w=\v-\u\}\le H^n.\]
We therefore deduce that
\begin{align*}
\left|\sum_{\x\in I}\psi(\x)\right|^2&\ll P^nH^{-n}
\sum_{|\b{w}|<H}\left|
\sum_{\y\in I'}\psi(\y+q\b{w})\overline{\psi(\y)}\right|\\
&\ll q^n\phi^n\sum_{|\b{w}|<H}\left|
\sum_{\y\in I'}\psi(\y+q\b{w})\overline{\psi(\y)}\right|,
\end{align*}
with some new box $I'\subseteq I\subseteq [-P,P]^n$.
On applying
Cauchy's inequality to (\ref{stW}) we  thus find that 
\beql{stW+}
L^{2^{d-1}}\ll P^{-dn}q^n\phi^n\sum_{|\x_1|<P}\dots\sum_{|\x_{d-2}|<P}
\sum_{|\b{w}|<H}\left|
\sum_{\y\in I'}\psi(\y+q\b{w})\overline{\psi(\y)}\right|.
\eeq

Referring to the definition of the function $\psi$ we see that
\[\psi(\y+q\b{w})\overline{\psi(\y)}
=e\left(\Delta_{\x_1,\dots,\x_{d-2},q\w}(f+g)(\y)\right).\]
Since $f$ is a polynomial of degree $d$, with leading form $F$, 
we see that 
\[\Delta_{\x_1,\dots,\x_{d-2},q\w}(f)(\y)\] 
is a linear polynomial in $\y$, with leading homogeneous part 
\[F(\x_1,\dots,\x_{d-2},q\w,\y) 
=qF(\x_1,\dots,\x_{d-2},\w,\y),\]
where $F(\x_1,\dots,\x_{d})$ is the polar form for $F$, described above. 
Moreover
\[\Delta_{\x_1,\dots,\x_{d-2},q\w}(g_1)(\y)\]
will be an integral polynomial identically divisible by $q$, so that
\[e\left(\Delta_{\x_1,\dots,\x_{d-2},q\w}(q^{-1}g_1)(\y)\right)=1\]
for every $\y\in\ZZ^n$. Finally we consider the exponential factor
involving $g_2$. Using \eqref{partials}, for any non-negative integer
$k$ each of the $k$-th order partial derivatives of
\[\Delta_{\x_1,\dots,\x_{d-2},q\w}(g_2)(\y)\]
will be
\[\ll_k\left(\prod_{i=1}^{d-2}|\x_i|\right)q|\w|\phi P^{-(d-1)-k}
\ll_k qH\phi P^{-1-k}\ll_k P^{-k}\]
for $\y\in I'$, in view of our choice \eqref{H} of $H$.  We may
therefore remove the exponential factor involving $g_2$, using
multi-dimensional partial summation, so as to produce 
\beql{PS}
\sum_{\y\in I'}\psi(\y+q\b{w})\overline{\psi(\y)}\ll
\left|\sum_{\y\in I''}e\big(qF(\x_1,\dots,\x_{d-2},\w,\y)\big)\right|
\eeq
for a further box $I''$. (To be precise, partial summation produces a
bound involving sums over various boxes, and we take $I''$ to be the
box for which the sum is maximal.)

We proceed to sum over $\y$ to get
\[\sum_{\y\in I''}
e\big(qF(\x_1,\dots,\x_{d-2},\w,\y)\big)\ll E\]
with 
\[E=\prod_{i=1}^n\frac{P}{1+P\|qF^{(i)}(\x_1,\dots,\x_{d-2},\w)\|}.\]
Combining the above estimate with \eqref{stW+} and \eqref{PS} leads 
to the bound 
\[L^{2^{d-1}}\ll P^{-dn}q^n\phi^n
\sum_{|\x_1|<P}\dots\sum_{|\x_{d-2}|<P}\sum_{|\b{w}|<H}E.\]

We now follow the strategy used by Davenport in his proof of
\cite[Lemma~3.2]{Dav32}. We write, temporarily,
$\{\theta\}=\theta-[\theta]$ for any real $\theta$, and define
$N(\x_1,\dots,\x_{d-2};\b{r})$ as the number of integer vectors $\w$
for which $|\w|< H$ and
\[\big\{qF^{(i)}(\x_1,\dots,\x_{d-2},\w)\big\}\in
(r_i/P,(1+r_i)/P]\;\; \mbox{for}\;1\le i\le n.\]
We also write $n(\x_1,\dots,\x_{d-2})$ similarly for the number of
integer vectors $\w$ for which $|\w|< H$ and
\[\|qF^{(i)}(\x_1,\dots,\x_{d-2},\w)\|\le P^{-1}\;\;
\mbox{for}\;1\le i\le n.\]
Now if $\w_1,\w_2$ are counted by
$N(\x_1,\dots,\x_{d-2};\b{r})$ then $\w_2-\w_1$ is counted by
$n(\x_1,\dots,\x_{d-2})$, whence
$N(\x_1,\dots,\x_{d-2};\b{r})\le n(\x_1,\dots,\x_{d-2})$
for any $\b{r}\in\RR^n$. Thus
\begin{align*}
\lefteqn{\sum_{|\x_1|<P}\dots\sum_{|\x_{d-2}|<P}\sum_{|\b{w}|<H}
\prod_{i=1}^n\big(1+P\|qF^{(i)}(\x_1,\dots,\x_{d-2},\w)\|\big)^{-1}}\\
&\ll\sum_{\substack{\b{r}\in\ZZ^n\\|\b{r}|\le P}}\prod_{i=1}^n(1+|r_i|)^{-1}
\sum_{|\x_1|<P}\dots\sum_{|\x_{d-2}|<P}N(\x_1,\dots,\x_{d-2};\b{r})\\
&\ll\sum_{\substack{\b{r}\in\ZZ^n\\|\b{r}|\le P}}\prod_{i=1}^n(1+|r_i|)^{-1}
\sum_{|\x_1|<P}\dots\sum_{|\x_{d-2}|<P}n(\x_1,\dots,\x_{d-2})\\
&\ll(\log P)^n\sum_{|\x_1|<P}\dots\sum_{|\x_{d-2}|<
  P}n(\x_1,\dots,\x_{d-2}).
\end{align*}
We therefore conclude that
\beql{near}
L^{2^{d-1}}\ll P^{-(d-1)n}q^n\phi^n(\log P)^n\cN,
\eeq
where $\cN$ counts $(d-1)$-tuples of integer vectors
$(\x_1,\dots,\x_{d-2},\w)$ satisfying
\[|\x_i|<P,\;(1\le i\le d-2)\;\;\;\mbox{and}\;\;\; |\w|<H,\]
such that
\[\|qF^{(i)}(\x_1,\dots,\x_{d-2},\w)\|\le P^{-1}\;\; \mbox{for}\;1\le i\le n.
\]

To estimate $\cN$ we apply the following result, which is Lemma 3.3 of
Davenport \cite{Dav32}.
\begin{lemma}\label{shrink}
Let $L\in M_n(\RR)$ be a real symmetric $n\times n$ matrix.  Let $a>1$
and let
\[N(Z):=\#\{\b{u}\in\ZZ^n: |\b{u}|<aZ,\; \|(L\b{u})_i\|<a^{-1}Z\; \forall
  i\le n\}.\]
Then, if $0<Z_1\le Z_2\le 1$, we have
\[N(Z_2)\ll\left(\frac{Z_2}{Z_1}\right)^nN(Z_1).\]
\end{lemma}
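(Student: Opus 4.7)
The plan is to interpret $N(Z)$ as a lattice-point count in a symmetric convex body in $\RR^{2n}$, and then to exploit the symmetry of $L$ via a symplectic duality argument that halves the effective dimension of the problem from $2n$ to $n$.

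First, introduce the lattice
\[\Lambda := \{(\b{u}, L\b{u} - \b{v}) : \b{u}, \b{v} \in \ZZ^n\} \subset \RR^{2n},\]
which has covolume $1$ (being the image of $\ZZ^{2n}$ under the unimodular shear $(\b{u}, \b{v})\mapsto (\b{u}, L\b{u}-\b{v})$), together with the symmetric convex body
\[K(Z) := [-aZ, aZ]^n \times [-Z/a, Z/a]^n.\]
One checks that $N(Z) \asymp \#(\Lambda \cap K(Z))$, with implied constants depending only on $n$. By Minkowski's second theorem, the successive minima $\mu_1 \le \cdots \le \mu_{2n}$ of $K(1)$ with respect to $\Lambda$ satisfy $\prod_{i=1}^{2n}\mu_i \asymp 1$, and one has the standard counting formula
\[\#(\Lambda \cap K(Z)) \asymp \prod_{i=1}^{2n} \max(1, Z/\mu_i).\]

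The crucial step is to establish $\mu_n \mu_{n+1} \gg 1$ by exploiting the symmetry of $L$. Define the skew-symmetric bilinear form
\[\omega((\b{u}, \bth), (\b{u}', \bth')) := \b{u} \cdot \bth' - \bth \cdot \b{u}'\]
on $\RR^{2n}$. A direct computation using the identity $\b{u}^{T} L \b{u}' = (\b{u}')^{T} L \b{u}$, valid precisely because $L$ is symmetric, shows that $\omega$ takes integer values on $\Lambda \times \Lambda$. Let $\b{e}_1, \ldots, \b{e}_{2n}$ be linearly independent lattice vectors realising the successive minima; writing $\b{e}_i = (\b{u}_i, \bth_i)$ we have $|\b{u}_i| \le a\mu_i$ and $|\bth_i| \le \mu_i/a$, yielding the pointwise bound $|\omega(\b{e}_i, \b{e}_j)| \le 2n \mu_i \mu_j$. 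If $\mu_n \mu_{n+1} < 1/(2n)$, then for every $1 \le i < j \le n+1$ one would have $|\omega(\b{e}_i, \b{e}_j)| < 1$, forcing this integer to vanish; but then $\b{e}_1, \ldots, \b{e}_{n+1}$ would span an $(n+1)$-dimensional isotropic subspace of the non-degenerate symplectic form $\omega$ on $\RR^{2n}$, contradicting the fact that isotropic subspaces have dimension at most $n$.

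Finally, decompose the ratio
\[\frac{N(Z_2)}{N(Z_1)} \asymp \prod_{i=1}^{2n} \frac{\max(1, Z_2/\mu_i)}{\max(1, Z_1/\mu_i)}\]
into the halves $i \le n$ and $i \ge n+1$. Each factor is trivially at most $Z_2/Z_1$, so the first half contributes at most $(Z_2/Z_1)^n$. For $i \ge n+1$, the bound $\mu_n \mu_{n+1} \gg 1$ combined with $Z_2 \le 1$ forces either $\mu_i \ge 1 \ge Z_2$, in which case the factor equals $1$, or else $\mu_i$ is bounded below by an absolute constant, giving a factor of $O(1)$; in either event the second half contributes at most a constant, and the claimed bound follows. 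The main obstacle is the isotropic-subspace rigidity argument of the preceding paragraph: everything hinges on the symmetry of $L$ making $\omega$ integer-valued on $\Lambda$, which in turn rules out the simultaneous smallness of $n+1$ successive minima.
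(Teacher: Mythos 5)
Your argument is correct. Note first that the paper does not prove this lemma at all: it is quoted verbatim as Lemma~3.3 of Davenport's \emph{Cubic forms in thirty-two variables}, so the comparison is really with Davenport's original proof. That proof has the same overall architecture as yours --- pass to a unimodular lattice in $\RR^{2n}$ built from $(\b{u},L\b{u}-\b{v})$, control the point count at scale $Z$ by the successive minima, and extract the key rigidity from the symmetry of $L$ --- but Davenport obtains the rigidity in the form $\mu_r\mu_{2n+1-r}\asymp 1$ for \emph{all} $r$, by showing the lattice is essentially self-polar and invoking Mahler's theory of polar (compound) lattices. Your route to the single inequality that is actually needed, $\mu_n\mu_{n+1}\gg 1$, via the integrality of the symplectic form $\omega$ on $\Lambda\times\Lambda$ and the bound $\dim\le n$ on isotropic subspaces, is a genuinely different and cleaner mechanism; it buys you a shorter, more self-contained proof at the cost of quoting the two-sided counting formula $\#(\Lambda\cap ZK)\asymp\prod_i\max(1,Z/\mu_i)$, which Davenport's set-up essentially reproves from scratch. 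Two small points of hygiene: the passage from $N(Z)$ to $\#(\Lambda\cap K(Z))$ and back involves the usual open-versus-closed boundary effects (harmless, but worth a sentence, since the multiplicity of $\b{v}$ per $\b{u}$ is between $1$ and $2^n$ only because $Z/a<1$); and in the last step the cleanest justification that each factor with $i\ge n+1$ is $O(1)$ is simply $\mu_i\ge\mu_{n+1}\ge(\mu_n\mu_{n+1})^{1/2}\gg 1\ge Z_2$, which you gesture at but do not quite write down.
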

We proceed to choose a parameter $K\ge 1$, as in Lemma
\ref{lem:final2}. It follows in particular that $q\phi K\geq1$, since
$q$ and $\phi$ are at least $1$. We then apply  Lemma \ref{shrink} to
each of the 
vectors $\x_1,\dots,\x_{d-2}$ in succession.  At the $i$-th step we
use 
\[a=P(q\phi K)^{(i-1)/2}, \;\;\; Z_1=(q\phi K)^{-(i+1)/2}\;\;\; 
\mbox{and}\;\;\; Z_2=(q\phi K)^{-(i-1)/2}.\]
Finally we apply Lemma \ref{shrink} to $\w$ with 
\[a=(HP)^{1/2}(q\phi K)^{(d-2)/2}, \]
and
\[Z_1=K^{-1}H^{1/2}P^{-1/2}(q\phi K)^{-(d-2)/2},\;\;\;
 Z_2=H^{1/2}P^{-1/2}(q\phi K)^{-(d-2)/2}.\]
One readily verifies that these choices satisfy the conditions for the
lemma, and concludes that
\[\cN\ll (q\phi)^{(d-2)n}K^{(d-1)n}\cM,\]
where $\cM$ is as in the statement of Lemma \ref{lem:final2}.
The required estimate then follows on inserting this into  \eqref{near}.

\section{The degree $D$ case}\label{sec:dD}

We now return to the situation in Section  \ref{s:2}.  Suppose that we have a
parameter $\alpha_{i,d}\in\RR$ corresponding to each form $F_{i,d}$,
for $1\le i\le r_d$ and each $1\leq d\leq D$.
Recall that a box $\mathcal{B}\subseteq [-1,1]^n$, a modulus $M\in\NN$
and an integer vector $\m_0$ are
given, and are fixed once for all.

We apply the work of the previous section with
\[f(\x)=\sum_{j=1}^D\sum_{i=1}^{r_j}\alpha_{i,j}F_{i,j}
(M\x+\m_0)\] 
and $g(\x)=0$. If we take 
\[\mathcal{B}\,'=\{\x:\,M\x+\mathbf{m}_0\in P\mathcal{B}\}\] 
then $\mathcal{B}\,'\subseteq [-P,P]^n$ for large enough $P$ (since 
$\mathbf{m}_0=\mathbf{0}$ for $M=1$). 
We may set $q=1$ and $\phi=1$ in the notation of 
Section~\ref{sec:ES}. Moreover the leading 
form of $f$ has degree $D$  and is given by
\beql{Fd}
F(\x)=M^D\sum_{i=1}^{\rho}\alpha_{i,D}F_{i,D}(\x),
\eeq
where we have written
\[r_D=\rho\]
for brevity. Our problem now corresponds closely to that
encountered by Birch \cite{birch}, and we shall follow his line of
attack. The outcome will be that either the exponential sum is small,
or the coefficients $\alpha_{i,D}$ are all close to rationals with a small
denominator.  This denominator will be denoted by $q$, and is not to
be confused with the number $q=1$ above, which is related to the
polynomial $g(\x)=0$.

The analysis of the previous section shows that we have a bound of the
shape in Lemma \ref{lem:final2}, in which the parameter $K$ is at our
disposal. We will take $K=\max\{1,K_1\}$, with
\[K_1=P\left(\frac{L^{2^{D-1}}}{(\log P)^{n+1}}\right)^{1/(n-B_D)},\]
where $B_D$ is given by \eqref{eq:sig-d}.  The reader should observe
that it is perfectly permissible to use a value for $K$ which depends
on $L$.
We now examine $\cM$, considering three different cases.  The first of
these is that in which $K_1\le 1$, so that 
\[L^{2^{D-1}}\le P^{B_D-n}(\log P)^{n+1}.\]
This is satisfactory for our purposes (see Lemma \ref{lD}). 
We will therefore assume henceforth that $K=K_1>1$.  

The second case is that in which all the
$(D-1)$-tuples counted by $\cM$ correspond to elements of the set
$\widehat{S}_D$ given by (\ref{eq:hat-S}). In this situation we will
apply the following estimate.
\begin{lemma}\label{L1}
Let $d\leq D$, let $P\ge 1$ and let $\cM_0(P)$ be the number of
$(d-1)$-tuples of 
 vectors $(\x_1,\dots,\x_{d-1})\in \widehat{S}_d(\ZZ)$ having $\max|\x_i|\le P$.
Then
$$
\cM_0(P)\ll P^{B_d+n(d-2)}.
$$
\end{lemma}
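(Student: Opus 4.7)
The plan is to reduce the lemma to an algebro-geometric dimension bound and then invoke a standard counting result. Concretely, I will prove that $\dim \widehat{S}_d \le B_d + n(d-2)$, and then apply the well-known fact that for any affine variety $V \subseteq \AA^N$ of dimension $D$ whose defining polynomials have degree bounded in terms of the data, one has $\#\{\x \in V(\ZZ) : |\x| \le P\} \ll P^D$ (proved by induction on $D$ via generic projection to affine subspaces). Applied to $V=\widehat{S}_d \subseteq \AA^{n(d-1)}$, this gives $\cM_0(P) \ll P^{B_d + n(d-2)}$ at once.

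To establish the dimension bound, I would introduce the diagonal embedding $\Delta : \AA^n \hookrightarrow (\AA^n)^{d-1}$, $\x \mapsto (\x,\dots,\x)$. Using the multilinearity and symmetry of the polar form together with the Euler identity, one checks that $\un{F}_{i,d}(\x,\dots,\x)$ is a non-zero scalar multiple of $\nabla F_{i,d}(\x)$; indeed a small explicit calculation shows the scalar is $(d-1)!$. Consequently $\widehat{J}_d(\x,\dots,\x)$ is proportional to $J_d(\x)$, so that the pull-back of $\widehat{S}_d$ along $\Delta$ is exactly $S_d$, which has dimension $B_d$. The standard affine intersection inequality in the smooth ambient space $(\AA^n)^{d-1}$ then shows that any irreducible component $W$ of $\widehat{S}_d$ which meets $\Delta$ satisfies
\[\dim(W\cap \Delta) \ge \dim W - \codim \Delta = \dim W - n(d-2).\]
Combined with the inclusion $W \cap \Delta \subseteq S_d$, this yields $\dim W \le B_d + n(d-2)$, and the desired bound follows by taking the maximum over the components.

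The main obstacle is to justify that $W \cap \Delta$ is in fact non-empty for every irreducible component $W$ of $\widehat{S}_d$, so that the intersection inequality may be applied. For this I would exploit the observation that $\widehat{S}_d$ is a cone from the origin: its defining equations are the $r_d \times r_d$ minors of $\widehat{J}_d(\x_1,\dots,\x_{d-1})$, each of which is multi-homogeneous of multi-degree $(r_d,\dots,r_d)$ in $(\x_1,\dots,\x_{d-1})$, and so in particular homogeneous under uniform scaling. Hence the defining ideal of $\widehat{S}_d$ is homogeneous, its irreducible components are themselves cones, and each therefore contains $\mathbf{0} \in \Delta$. With this non-emptiness secured, the intersection argument above goes through and the lemma follows.
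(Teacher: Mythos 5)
Your proposal is correct and follows essentially the same route as the paper: the dimension bound $\dim(\widehat{S}_d)\le B_d+n(d-2)$ is obtained by intersecting with the diagonal and identifying that intersection with $S_d$, and the point count then follows from the standard estimate $\#\{\x\in V(\ZZ):|\x|\le P\}\ll P^{\dim V}$ (Birch's Lemma 3.1). You in fact supply two details the paper leaves implicit — the identity $\un{F}_{i,d}(\x,\dots,\x)=(d-1)!\,\nabla F_{i,d}(\x)$ and the cone argument guaranteeing every component of $\widehat{S}_d$ meets the diagonal — both of which are correct.
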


\begin{proof}
Since $S_d$ is the intersection of $\widehat{S}_d$ with the diagonal
\[{\rm Diag}=\{(\x,\dots,\x)\in(\AA^{n})^{d-1}\},\]
we see that
\begin{align*}
\dim(\widehat{S}_d)&\le B_d+{\rm codim}({\rm Diag})\\
&= B_d +n(d-2).
\end{align*}
We now apply Lemma 3.1 of Birch  \cite{birch} to conclude the proof. 
\end{proof}

Now, with the above notation, one has 
\[\cM\le\cM_0(P/K)\ll
\left(\frac{P}{K}\right)^{B_D+n(D-2)}.\]
In this case Lemma \ref{lem:final2} yields
\[L^{2^{D-1}}\ll \left(\frac{K}{P}\right)^{n-B_D}(\log P)^n,
\]
Since $K=K_1$ we deduce that
\[L^{2^{D-1}}\ll L^{2^{D-1}}(\log P)^{-1}.\]
Thus this second case cannot occur if $P$ is sufficiently large.

This takes us to the third case, in which $K=K_1>1$ and there is some
$(D-1)$-tuple counted by $\cM$ for which 
\[\rank(\widehat{J}_D(\x_1,\dots,\x_{D-1}))=r_D=\rho.\]
Suppose the matrix corresponding to columns $j_1,\dots,j_{\rho}$ has
non-zero determinant. Calling the matrix $W$, we have
\[W_{ik}=F_{i,D}^{(j_k)}(\x_1,\dots,\x_{D-1}), \quad (1\le i,k\le \rho),\] 
where 
$F_{i,D}^{(j_k)}(\x_1,\dots,\x_{D-1})$ is the $j_k$-th component of the row vector 
$\underline{F}_{i,D}(\x_1,\dots,\x_{D-1})$.  But then 
 \eqref{Fd} yields
\begin{align*}
F^{(j_k)}(\x_1,\dots,\x_{D-1})
&=M^D\sum_{i=1}^{\rho}\alpha_{i,D}
F_{i,D}^{(j_k)}(\x_1,\dots,\x_{D-1})\\
&=M^D\sum_{j=1}^{\rho}\alpha_{i,D}W_{ik}.
\end{align*}
We record for future reference the fact that
\beql{Wb}
H(W)\ll (\max|\x_h|)^{D-1}\ll
\left(\frac{P}{K_1}\right)^{D-1},
\eeq
where we use $H(W)$ to denote the maximum of $|W_{jk}|$. 

Since $(\x_1,\dots,\x_{D-1})$ is counted by $\cM$ it follows that
\[\left\|M^D\sum_{i=1}^{\rho}\alpha_{i,D}W_{ik}\right\|\le
\frac{1}{PK_1^{D-1}}, \quad  (1\le k\le \rho).\]
We therefore write
\beql{sys+}
M^D\sum_{i=1}^{\rho}\alpha_{i,D}W_{ik}=n_k+\xi_k
\eeq
for $k=1,\dots,\rho$, with $n_k\in\ZZ$ and \[|\xi_k|\le\frac{1}{PK_1^{D-1}}.\]
We proceed to abbreviate the system \eqref{sys+} by writing
\[M^DW\un{\alpha}=\un{n}+\un{\xi},\]
and then multiply by the adjoint, $W'$ say, of $W$ to see that
\[M^D\det(W)\un{\alpha}=W'\un{n}+W'\un{\xi}.\]
However $W'$ is an integer matrix, with
\[H(W')\ll H(W)^{\rho-1}\ll
\left(\frac{P}{K_1}\right)^{(D-1)(\rho-1)},\]
by \eqref{Wb}.  It follows that
\[\|M^D\det(W)\alpha_{i,D}\|\ll\left(\frac{P}{K_1}\right)^{(D-1)(\rho-1)}
\frac{1}{PK_1^{D-1}},\]
for $i=1,\dots,\rho$.  If we now write $q=M^D|\det(W)|\ll H(W)^\rho$,
then $q$ will be a positive integer, since we chose $W$ to have
non-zero determinant. Moreover for large enough $P$ we will have $q\le Q$, where
\[Q=\left(\frac{P}{K_1}\right)^{(D-1)\rho}\log P,\] 
and 
\[\|q\alpha_{i,D}\|\le QP^{-D}.\]
We may now summarize all these conclusions as follows.

\begin{lemma}\label{lD}
Let $|S(\bal)|=P^nL$ and write $\rho=r_D$.  Then if $P$ is large
enough, either
\beql{Lb}
L^{2^{D-1}}\le P^{B_D-n}(\log P)^{n+1},
\eeq
or there is a $q\le Q$, with 
\[Q\le\left((\log P)^{n+1}L^{-2^{D-1}}\right)^{(D-1)\rho/(n-B_D)}\log P,\]
such that
\[\|q\alpha_{i,D}\|\le QP^{-D}, \quad (1\leq i\le \rho).\] 
\end{lemma}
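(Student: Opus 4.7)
The plan is to package the preceding analysis into three cases according to the size of a carefully chosen auxiliary parameter $K$ in Lemma~\ref{lem:final2}. First I would apply Lemma~\ref{lem:final2} to $f(\x)=\sum_{j,i}\alpha_{i,j}F_{i,j}(M\x+\m_0)$ with the trivial perturbation $g\equiv 0$, so that $q=\phi=1$ in the notation of Section~\ref{sec:ES}, the leading form of $f$ being the degree $D$ form $F(\x)=M^{D}\sum_{i=1}^{\rho}\alpha_{i,D}F_{i,D}(\x)$. This gives
\[
L^{2^{D-1}}\ll P^{-(D-1)n}K^{(D-1)n}(\log P)^{n}\cM,
\]
where $\cM$ counts $(D-1)$-tuples in the small box $|\x_{i}|<P/K$ with $\|F^{(i)}(\x_{1},\dots,\x_{D-1})\|$ correspondingly small. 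I would then make the specific choice
\[
K=\max\bigl\{1,\,K_{1}\bigr\},\qquad K_{1}=P\bigl(L^{2^{D-1}}/(\log P)^{n+1}\bigr)^{1/(n-B_{D})},
\]
balancing the two factors in a manner that forces $\cM$ to be essentially trivial.

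Next I would split into three cases. If $K_{1}\le 1$ then the defining inequality for $K_{1}$ rearranges directly into the first alternative \eqref{Lb}. Assume henceforth $K=K_{1}>1$. If every $(D-1)$-tuple counted by $\cM$ actually lies in $\widehat{S}_{D}$, then Lemma~\ref{L1} applied at scale $P/K$ yields $\cM\ll (P/K)^{B_{D}+n(D-2)}$; substituting back into the Weyl bound collapses to $L^{2^{D-1}}\ll L^{2^{D-1}}(\log P)^{-1}$, which is impossible for large $P$. Hence we fall into the third case: some $(D-1)$-tuple counted by $\cM$ has $\rank(\widehat{J}_{D}(\x_{1},\dots,\x_{D-1}))=\rho$.

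The main technical step, and the only one that requires genuine argument beyond bookkeeping, is extracting the Diophantine approximation claim from the third case. I would select a $\rho\times\rho$ submatrix $W$ of $\widehat{J}_{D}(\x_{1},\dots,\x_{D-1})$ with nonzero determinant; identifying the corresponding $\rho$ components of the polar-form relation gives
\[
M^{D}W\underline{\alpha}=\underline{n}+\underline{\xi},\qquad |\xi_{k}|\le \tfrac{1}{PK_{1}^{D-1}},
\]
where $\underline{\alpha}=(\alpha_{1,D},\dots,\alpha_{\rho,D})^{t}$ and $\underline{n}\in\ZZ^{\rho}$. Multiplying by the classical adjoint $W'$ of $W$ produces $M^{D}\det(W)\underline{\alpha}=W'\underline{n}+W'\underline{\xi}$. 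Taking $q=M^{D}|\det(W)|$ (a positive integer by the nonsingularity of $W$), the bound $H(W)\ll (P/K_{1})^{D-1}$ gives $H(W')\ll (P/K_{1})^{(D-1)(\rho-1)}$ and hence
\[
q\ll (P/K_{1})^{(D-1)\rho},\qquad \|q\alpha_{i,D}\|\ll (P/K_{1})^{(D-1)\rho}P^{-D}.
\]
Inserting the value of $K_{1}$ produces the claimed bound $q\le Q$ with $Q=(P/K_{1})^{(D-1)\rho}\log P$.

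The main obstacle is the third case: one has to realise that the adjoint matrix, despite enlarging coefficients to a power $\rho-1$, still leaves a manageable denominator because all the size losses are controlled by powers of $P/K_{1}$, which in turn are directly tied to $L$ through the choice of $K_{1}$. The remaining work — verifying that the three cases exhaust all possibilities, and that $P$ sufficiently large is allowed since the hypotheses on $\mathcal{B}$, $M$, $\m_{0}$ and the forms are fixed — is routine.
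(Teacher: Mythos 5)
Your proposal is correct and follows essentially the same route as the paper: the same choice $K=\max\{1,K_1\}$ with $K_1=P\bigl(L^{2^{D-1}}/(\log P)^{n+1}\bigr)^{1/(n-B_D)}$, the same three-case split (with the second case eliminated via Lemma \ref{L1} and the resulting self-contradictory bound), and the same adjoint-matrix extraction of $q=M^D|\det(W)|$ in the full-rank case. No gaps.
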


We now ask what one can say about the minor arc integral using Lemma
\ref{lD}.  For any $L_0>0$ we write $\cA(L_0)$
for the set of $R$-tuples of
values $\alpha_{i,d}$ with $d\le D$, $i\le r_d$ such that $L_0< L\le
2L_0$. Then if $L_0$ is such that \eqref{Lb} holds, the contribution
to the minor arc
integral will be \[\ll P^{n+\ve-(n-B_D)/2^{D-1}},\]
for any fixed $\ve>0$.  This is satisfactory if $(n-B_D)/2^{D-1}>\cD$,
or in other words, if $n>B_D+2^{D-1}\cD$.

In the alternative case we see that there is an
integer $q\leq Q$ such that every $\alpha_{i,D}$, for $1\le i\le
\rho$, has an approximation 
\[\alpha_{i,D}=a_{i,D}/q+O(QP^{-D}q^{-1})\] 
with $a_{i,D}\in\ZZ$ and $0\le a_{i,D}\le q$.  
Hence
\begin{align*}
\meas(\cA(L_0))&\ll\sum_{q\le Q} q^{\rho}(QP^{-D}q^{-1})^\rho\\ 
&\ll Q^{1+\rho}P^{-D\rho}\\
&\ll L_0^{-2^{D-1}(D-1)\rho(1+\rho)/(n-B_D)}P^{\ve-D\rho}.
\end{align*}
The corresponding contribution to the minor arc integral will
therefore be
\[\ll L_0^{1-2^{D-1}(D-1)\rho(1+\rho)/(n-B_D)}P^{n+\ve-D\rho}.\]

Hence, for example, if our system has forms of degree $D$ only, then
$\cD=D\rho$ and we have a satisfactory bound when
\[n>B_D+\rho(\rho+1)2^{D-1}(D-1),\]
providing that $L_0^{-1}$ exceeds some small fixed power of $P$. This
corresponds precisely to the condition on $n$ in \eqref{br}.

\section{Exponential sums --- the iterative argument}
\label{sec:iterate}

In the previous section we showed that either $S(\bal)$ (or
equivalently $L$) is small, as expressed by \eqref{Lb}, or that the
coefficients $\alpha_{i,D}$ all have good rational approximations with
the same small denominator $q$.  In this section we iterate this idea,
assuming that we have good approximations for $\alpha_{i,j}$ for
$d<j\le D$ and $1\leq i\le r_j$, and deducing either that $L$ is small, or that
the values $\alpha_{i,d}$ also have good rational approximations for
$1\leq i\le r_d$.

Thus we suppose we have a degree $d<D$ in $\Delta$,
and we suppose that there is a
positive integer $q\le Q$ such that
\[\|q\alpha_{i,j}\|\le QP^{-j}
\;\;\;\mbox{for}\;\;\; d<j\le D\;\mbox{and}\;1\le i\le r_j.\]
We then define
\[f(\x)=\sum_{j=1}^d\sum_{i=1}^{r_j}\alpha_{i,j}
F_{i,j}(M\x+\m_0)\] 
and
\[g(\x)=\sum_{j=d+1}^D\sum_{i=1}^{r_j}\alpha_{i,j}F_{i,j}
(M\x+\m_0),\] 
and we write
\[r_d=\rho\]
for brevity.
Then the polynomial $f$ has degree 
at most 
$d$ and the
leading form of degree $d$ is now
\[F(\x)=M^d\sum_{i=1}^{\rho}\alpha_{i,d}F_{i,d}(\x).\]
We also write
\[\alpha_{i,j}=\frac{a_{i,j}}{q}+\theta_{i,j}
\;\;\;\mbox{for}\;\;\; d<j\le D\;\mbox{and}\;1\le i\le r_j,\]
so that \[|\theta_{i,j}|\le QP^{-j}q^{-1}.\] To complete the setup we put
\[g_1(\x)=\sum_{j=d+1}^D\sum_{i=1}^{r_j}a_{i,j}
F_{i,j}(M\x+\m_0)\] 
and
\[g_2(\x)=\sum_{j=d+1}^D\sum_{i=1}^{r_j}\theta_{i,j}
F_{i,j}(M\x+\m_0).\]
Then $g=q^{-1}g_1+g_2$ is in the required shape to apply the work of
Section \ref{sec:ES}, and in particular we
see that \eqref{partials} holds with $\phi=Q/q$.

We now proceed exactly as in the previous section, taking
$K=\max\{1,K_1\}$ with
\[K_1=PQ^{-1}\left(\frac{L^{2^{d-1}}}{(\log P)^{n+1}}\right)^{1/(n-B_d)}.\]
Then, if $K_1\le 1$ as in the first case of the argument, we will have
\[L^{2^{d-1}}\le (P/Q)^{B_d-n}(\log P)^{n+1},\]
which will be satisfactory.  The second case will be that in which
all the $(d-1)$-tuples counted by $\cM$ correspond to elements of the set
$\widehat{S}_d$. Since $q\phi=Q$ we then have
\[\cM\le\cM_0(P/QK)\ll
\left(\frac{P}{QK}\right)^{B_d+n(d-2)}\]
by Lemma \ref{L1}, after which Lemma \ref{lem:final2} yields
\[L^{2^{d-1}}\ll\left(\frac{QK}{P}\right)^{n-B_d}(\log P)^n.\]
Since $K=K_1$ we deduce that
\[L^{2^{d-1}}\ll L^{2^{d-1}}(\log P)^{-1},\]
and as before we conclude that this second case cannot occur if $P$ is
sufficiently large. 

The third case is that in which some $(d-1)$-tuple counted by $\cM$ has
\[\rank(\widehat{J}_d(\x_1,\dots,\x_{d-1}))=r_d=\rho.\]
Here the argument again follows that in the previous section, but now
\[H(W)\ll (\max|\x_h|)^{d-1}\ll\left(\frac{P}{QK_1}\right)^{d-1},\]
and
\[\left\|qM^d\sum_{i=1}^{\rho}\alpha_{i,d}W_{ik}\right\|\le
\frac{1}{PQ^{d-2}K_1^{d-1}}, \quad  (1\le k\le \rho).\] This time we put
\[qM^d\sum_{i=1}^{\rho}\alpha_{i,d}W_{ik}=n_k+\xi_k\] with $n_k\in \ZZ$ and
\[|\xi_k|\le\frac{1}{PQ^{d-2}K_1^{d-1}}.\] We will then have
\[H(W')\ll H(W)^{\rho-1}\ll
\left(\frac{P}{QK_1}\right)^{(d-1)(\rho-1)},\]
whence
\[\|qM^d\det(W)\alpha_{i,d}\|\ll\left(\frac{P}{QK_1}\right)^{(d-1)(\rho-1)}
\frac{1}{PQ^{d-2}K_1^{d-1}}\]
for $i=1,\dots,\rho$.  We therefore set $q^*=M^d|\det(W)|\ll
H(W)^{\rho}$, so that $q^*\le Q^*$ with 
\[Q^*=\left(\frac{P}{QK_1}\right)^{(d-1)\rho}\log P,\]
and \[\|qq^*\alpha_{i,d}\|\le QQ^*P^{-d}.\]
We may now summarize all these conclusions as follows.
\begin{lemma}\label{ld}
Let $|S(\bal)|=P^nL$. Suppose that $d\in\Delta$ and that
\[\|q\alpha_{i,j}\|\le QP^{-j}, \quad \mbox{for $d<j\le D$ and $1\le i\le r_j$}\]
with $q\le Q$.  Then if $P$ is large enough, either 
\[L^{2^{d-1}}\le\left(\frac{Q}{P}\right)^{n-B_d}(\log P)^{n+1},\] 
or there is a $q^*\le Q^*$, with 
\[Q^*=\left((\log P)^{n+1}L^{-2^{d-1}}\right)^{(d-1)r_d/(n-B_d)}\log
P,\] 
such that
$$
\|qq^*\alpha_{i,d}\|\le QQ^*P^{-d}, \quad  (1\leq i\leq r_d).
$$
\end{lemma}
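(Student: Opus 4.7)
The plan is to run the argument of Section~\ref{sec:dD} essentially verbatim, with the polynomial $f$ now absorbing only the terms of degree at most $d$ and the higher-degree pieces deposited into $g$. All the setup has been prepared in the paragraphs immediately preceding the lemma statement: the leading form of $f$ is $F(\x)=M^d\sum_{i=1}^{\rho}\alpha_{i,d}F_{i,d}(\x)$ (where $\rho=r_d$), and the decomposition $g=q^{-1}g_1+g_2$ has $g_1$ with integer coefficients while $g_2$ satisfies the derivative condition \eqref{partials} with $\phi=Q/q$, since $|\theta_{i,j}|\le QP^{-j}q^{-1}$ for each $j>d$.

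With these inputs I would apply Lemma~\ref{lem:final2} with $K=\max\{1,K_1\}$, and split into three cases exactly as in Section~\ref{sec:dD}. If $K_1\le 1$, the definition of $K_1$ rearranges directly to the first alternative of the lemma. If $K=K_1>1$ and every $(d-1)$-tuple counted by $\cM$ lies in $\widehat{S}_d(\ZZ)$, then Lemma~\ref{L1} together with $q\phi=Q$ gives $\cM\ll (P/(QK))^{B_d+n(d-2)}$; feeding this back into Lemma~\ref{lem:final2} yields $L^{2^{d-1}}\ll L^{2^{d-1}}(\log P)^{-1}$, which is impossible for $P$ sufficiently large. In the remaining case some tuple has $\rank(\widehat{J}_d(\x_1,\dots,\x_{d-1}))=\rho$; extracting a nonsingular $\rho\times\rho$ submatrix $W$ of columns $j_1,\dots,j_\rho$, the identity $F^{(j_k)}=M^d\sum_i\alpha_{i,d}W_{ik}$ combined with the inequality on $\|qF^{(i)}\|$ supplied by Lemma~\ref{lem:final2} (multiplied through by $q$) gives $qM^d\sum_i\alpha_{i,d}W_{ik}=n_k+\xi_k$ with $n_k\in\ZZ$ and $|\xi_k|\le 1/(PQ^{d-2}K_1^{d-1})$. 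Multiplying by the classical adjoint $W'$, and using $H(W')\ll H(W)^{\rho-1}$ together with $H(W)\ll (P/(QK_1))^{d-1}$, one then reads off $q^*=M^d|\det W|$ and $\|qq^*\alpha_{i,d}\|\le QQ^*P^{-d}$ with $Q^*\ll (P/(QK_1))^{(d-1)\rho}\log P$. Substituting the value of $K_1$ converts $P/(QK_1)$ into $((\log P)^{n+1}L^{-2^{d-1}})^{1/(n-B_d)}$, and this yields the form of $Q^*$ stated in the lemma.

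The main obstacle is the justification that Lemma~\ref{lem:final2} effectively sees only the leading degree-$d$ form $F$ of $f$, despite the presence of $g$. This is where the specific shape $g=q^{-1}g_1+g_2$ does its work: inside the modified Weyl differencing of Section~\ref{sec:ES}, the rational part $q^{-1}g_1$ contributes $e(\Delta_{\x_1,\dots,\x_{d-2},q\w}(q^{-1}g_1)(\y))=1$ at integer $\y$, because each difference along $q\w$ is divisible by $q$, while the real part $g_2$ is disposed of by multi-dimensional partial summation courtesy of \eqref{partials}. Once this is granted, everything else is bookkeeping, tracking factors of $Q$ and $q$ through a computation that is otherwise identical to the one producing Lemma~\ref{lD}.
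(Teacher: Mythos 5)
Your proposal is correct and follows essentially the same route as the paper: the same splitting $f$ (degrees $\le d$) versus $g=q^{-1}g_1+g_2$ with $\phi=Q/q$, the same choice $K=\max\{1,K_1\}$ with $K_1=PQ^{-1}\left(L^{2^{d-1}}(\log P)^{-(n+1)}\right)^{1/(n-B_d)}$, the same three-case analysis via Lemma~\ref{lem:final2} and Lemma~\ref{L1}, and the same adjoint-matrix extraction of $q^*=M^d|\det W|$ in the full-rank case. The only cosmetic remark is that Lemma~\ref{lem:final2} already bounds $\|qF^{(i)}\|$ directly, so no further multiplication by $q$ is needed at that step.
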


We may of course interpret Lemma~\ref{lD} as a special case of
Lemma~\ref{ld}, corresponding to $d=D$ and $Q=1$.  

Our plan is to use Lemma \ref{lD} followed by repeated applications
of Lemma \ref{ld} for the successively smaller values of
$d\in\Delta$. Thus in Lemma~\ref{lD} either 
\[L^{2^{D-1}}\le P^{B_D-n}(\log P)^{n+1},\]
or there is a $q_D\le Q_D$, with 
\[Q_D= \left((\log P)^{n+1}L^{-2^{D-1}}\right)^{(D-1)r_D/(n-B_D)}\log P,\]
such that
\[\|q_D\alpha_{i,D}\|\le Q_DP^{-D}, \quad (1\leq i\le r_D).\]
If the second case holds we may
then apply Lemma \ref{ld} for degree
\[D':=\max\{d\in\Delta:\,d<D\}.\]
We then deduce either that
\[L^{2^{D'-1}}\le (Q_D/P)^{n-B_{D'}}(\log P)^{n+1},\]
or that there is a $q_{D'}=q_Dq^*\le Q_{D'}=Q_DQ^*$, with \[Q^*= \left((\log
  P)^{n+1}L^{-2^{D'-1}}\right)^{(D'-1)r_{D'}/(n-B_{D'})}\log P,\]
such that
\[\|q_{D'}\alpha_{i,D'}\|\le Q_{D'}P^{-D'}, \quad (1\leq i\le r_{D'}).\]
Continuing in this manner we produce a succession of values $Q_d$ for
decreasing values of $d$ in $\Delta$, taking the form
\beql{Qform}
Q_d=(\log P)^{e(d)}L^{-s_d},\;\;\;(d\in\Delta),
\eeq
where $e(d)$ is some easily computed but unimportant exponent, and
$s_d$ is given by \eqref{eq:def-sj}.

When $1\le j\le D$ but $j\not\in\Delta$ it will be convenient to set
$Q_j=Q_k$, where $k$ is the smallest integer in $\Delta$
with $k>j$. We will also put $Q_{D+1}=1$.
In view of \eqref{eq:def-sj} we have $s_j=s_k$ so that
\eqref{Qform} extends to give
\beql{Qform+}
Q_d=(\log P)^{e(d)}L^{-s_d},\;\;\;(1\le d\le D)
\eeq
for appropriate exponents $e(d)$.  Now, for a general 
exponent $j\in\Delta$, as we iterate we will either obtain a bound
\beql{exp}
L^{2^{j-1}}\le (Q_{j+1}/P)^{n-B_j}(\log P)^{n+1}\;\;\;(j\in\Delta),
\eeq
or we will find a positive integer $q_j$ satisfying \beql{ci}
q_k\mid q_j\;\;\;(k>j,\; k\in\Delta),
\eeq
\beql{cii}
q_j\le Q_j,
\eeq
and
\beql{ciii}
\|q_j\alpha_{i,j}\|\le Q_jP^{-j}, \quad (1\leq i\le r_j).
\eeq
When $1\le j\le D$ but $j\not\in\Delta$ it will also be convenient to set
$q_j=q_k$, where $k$ is the smallest integer in $\Delta$
with $k>j$.  With this convention we then have $q_j\le Q_j$ and
$q_j\mid q_{j+1}$ in general.

We can now partition the $R$-tuples
$\bal$ into sets $I^{(1)}_d$ (for $d\in\Delta$)
and $I^{(2)}$, as follows. The set $I^{(1)}_d$ consists of those $\bal$ for which
(\ref{exp}) fails for $j>d$, but holds for $j=d$.
The set $I^{(2)}$ then consists of the remaining
$R$-tuples $\bal$, for which (\ref{exp}) fails for all
$j\in\Delta$.

It follows from (\ref{Qform}) that if (\ref{exp}) holds one has
\[L^{2^{j-1}+(n-B_j)s_{j+1}}\ll P^{B_j-n+\ve},\]
for any fixed $\ve>0$. We therefore draw the following conclusion.

\begin{lemma}\label{lem:iter} Let $d\in\Delta$ and $\bal\in I^{(1)}_d$.  Then
\beql{LQb}
L^{2^{d-1}+(n-B_d)s_{d+1}}\ll P^{B_d-n+\ve}.
\eeq
Moreover there are positive integers $q_j$ such that
the conditions \eqref{ci}, \eqref{cii} and \eqref{ciii} hold for all
relevant values of $j>d$. 

Similarly, if $\bal\in I^{(2)}$ then there are positive integers $q_j$ such that
the conditions \eqref{ci}, \eqref{cii} and \eqref{ciii} hold for all
values of $j\in\Delta$. 
\end{lemma}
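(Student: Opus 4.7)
The plan is to prove the lemma by a direct bookkeeping argument on the iteration that has already been set up in the preceding paragraphs: the statement is essentially the formalisation of what the cascade of applications of Lemmas \ref{lD} and \ref{ld} actually yields.

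First I would fix $\bal \in I_d^{(1)}$. The defining property is that \eqref{exp} fails for every $j \in \Delta$ with $j > d$, so along the entire iteration from $j=D$ down to the smallest element of $\Delta$ strictly larger than $d$, we are always in the ``second alternative'' of Lemma \ref{lD} (for $j=D$) or Lemma \ref{ld} (for the subsequent $j$'s). At each such stage the lemma provides a positive integer $q_j$ with $q_j \le Q_j$ satisfying $\|q_j\alpha_{i,j}\| \le Q_j P^{-j}$ for $1 \le i \le r_j$, and the relation $q_j = q_{j'} q^*$ (where $j'$ is the next larger element of $\Delta$) guarantees the divisibility \eqref{ci}. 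Extending $q_j$ to $j \notin \Delta$ by the stated convention preserves \eqref{ci}, \eqref{cii}, \eqref{ciii} automatically, since both sides of each inequality only depend on the corresponding value at the next element of $\Delta$ above $j$.

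Next I would apply the final step at $j=d$ itself. The hypotheses of Lemma \ref{ld} hold with $q = q_{d+1}$ and $Q = Q_{d+1}$ by what has just been established. Since $\bal \in I_d^{(1)}$ the bound \eqref{exp} now holds at $j=d$, so
\[
L^{2^{d-1}} \le (Q_{d+1}/P)^{n-B_d}(\log P)^{n+1}.
\]
Substituting the expression $Q_{d+1} = (\log P)^{e(d+1)} L^{-s_{d+1}}$ from \eqref{Qform+} and absorbing all $\log P$ factors into $P^{\ve}$ yields
\[
L^{2^{d-1} + (n-B_d)s_{d+1}} \ll P^{B_d - n + \ve},
\]
which is \eqref{LQb}.

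Finally, for $\bal \in I^{(2)}$ the bound \eqref{exp} fails for every $j \in \Delta$, so the iteration never terminates in the ``exponential'' branch; instead Lemma \ref{lD} and every successive application of Lemma \ref{ld} produce the integers $q_j$ all the way down through $\Delta$, and the same convention extends them to all $1 \le j \le D$. The verification of \eqref{ci}--\eqref{ciii} is identical to the previous case, and no residual bound of the form \eqref{LQb} is claimed here. The only mildly delicate point is the bookkeeping of the exponents $e(d)$ in \eqref{Qform+}, but since the lemma only requires an arbitrary $P^{\ve}$ saving this is routine; no genuine obstacle arises.
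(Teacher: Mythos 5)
Your proposal is correct and follows essentially the same route as the paper: the lemma is just the formal record of the cascade of Lemma \ref{lD} and Lemma \ref{ld}, with the $q_j$'s coming from the second alternative whenever \eqref{exp} fails, and \eqref{LQb} obtained by substituting $Q_{d+1}=(\log P)^{e(d+1)}L^{-s_{d+1}}$ from \eqref{Qform+} into the bound \eqref{exp} at $j=d$. Your bookkeeping of the conventions for $j\notin\Delta$ and of the divisibility \eqref{ci} via $q_j=q_{j'}q^*$ matches the paper's (terser) argument exactly.
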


We conclude this section by remarking that it may be possible to
improve on the above estimates in certain cases. The numbers $q_d$ are
built up from a sequence of factors. This would allow one to replace
the argument in Section \ref{sec:ES} by one in which there were
several van der Corput steps, using various factors of $q_d$. In our
present treatment, when one uses Lemma \ref{shrink}, the ratio
$Z_2/Z_1$ is $q\phi K$ for the first $d-2$ steps, and $K$ for the
final step.  In the proposed variant these values become more equal,
which should be to our advantage.  However this can only be of use
when $\Delta$ contains at least three values $d\ge 3$, since the number
$q$ in our argument would need to have at least two factors, and so the
number $d-1$ of squaring steps would have to be at least two.

\section{The minor arc contribution}\label{sec:minor}

As in Section \ref{sec:dD},
for any $L_0>0$ we write $\cA(L_0)$
for the set of $R$-tuples of
values $\alpha_{i,d}$ with $d\le D$, $i\le r_d$ such that $L_0< L\le
2L_0$. We also write
$\cA(L_0;I^{(1)}_d)=I^{(1)}_d\cap\cA(L_0)\cap\minor$, and similarly
for $\cA(L_0;I^{(2)})$. 
In order to establish the required minor arc estimate
\eqref{eq:minor} we begin by examining
\[T(I^{(1)}_d):=\int_{\cA(L_0;I^{(1)}_d)}|S(\bal)| \d \bal\] 
for $d\in \Delta$.

When $d=D$ we have 
$$
L^{2^{D-1}}\le P^{B_{D}-n+\ve},  
$$
by (\ref{LQb}).  
Since $\meas(\cA(L_0;I^{(1)}_D))\le 1$ and $|S(\bal)|=P^nL$ it follows
that 
\[T(I^{(1)}_{D})\ll P^{n} L_0\ll P^{n-(n-B_D)/2^{D-1}+\ve}.\] 
Thus we will have a satisfactory
estimate $T(I^{(1)}_D)\ll P^{n-\mathcal{D}-\delta}$, for some 
$\delta>0$, provided that
\beql{NB1}
\mathcal{D}\frac{2^{D-1}}{n-B_D}<1.
\eeq

We now consider the general case, in which $\bal\in I^{(1)}_d$ for
some $d<D$ in $\Delta$. Thus (\ref{LQb}) holds, so that
\beql{L0b}
L_0^{2^{d-1}/(n-B_d)+s_{d+1}}\ll P^{-1+\ve}.
\eeq
When $d=D$ we estimated $\meas(\cA(L_0;I^{(1)}_D))$ trivially, but when $d<D$ we
have useful information on the numbers $\alpha_{i,j}$ for $j>d$, since
we know that (\ref{ciii}) applies for these.
Thus there are positive integers $q_D, q_{D-1},\dots,q_{d+1}$
depending on $\bal$ and satisfying (\ref{ci}) and (\ref{cii}), such
that 
\[\alpha_{i,j}=\frac{a_{i,j}}{q_j}+O(q_j^{-1}Q_jP^{-j}), \quad (d<
j\le D,\; 1\le i\le r_j),\] 
with $0\le a_{i,j}\le q_j$. Thus, given $q_j$, each individual $\alpha_{i,j}$ takes
values in a set of measure $O(Q_jP^{-j})$, and the $r_j$-tuple
$(\alpha_{1,j},\dots,\alpha_{r_j,j})$ has values in a set of measure
$O\big((Q_jP^{-j})^{r_j}\big)$. At this point we recall our
convention concerning the values of $q_j$ and $Q_j$ when $j\not
\in\Delta$.  With this in mind we see that $q_{d+1}$ determines $O(P^{\ve})$
possibilities for $q_{d+2},\dots,q_D$, by \eqref{ci}, and we conclude
that 
\begin{align*}
\meas(\cA(L_0;I^{(1)}_d))
&\ll P^{\ve}Q_{d+1}\prod_{j=d+1}^D(Q_jP^{-j})^{r_j}.
\end{align*}
Hence, using \eqref{Qform+}, we obtain 
\[\meas(\cA(L_0;I^{(1)}_d)) \ll P^{\ve-(d+1)r_{d+1}-\dots -Dr_D}
L_0^{-(s_{d+1}+s_{d+1}r_{d+1}+\dots+s_Dr_D)}.\]
Recalling the notation \eqref{eq:Dj} for $\mathcal{D}_j$ and that
$|S(\bal)|=P^nL$, it now follows that 
\[T(I^{(1)}_d)\ll P^{n-\mathcal{D}+\mathcal{D}_d+\ve}  
L_0^{1-(s_{d+1}+s_{d+1}r_{d+1}+\dots+s_Dr_D)},\]
with $L_0$ subject to (\ref{L0b}). Thus we will have a satisfactory
estimate $T(I^{(1)}_d)\ll P^{n-\mathcal{D}-\delta}$, for some 
$\delta>0$, provided that
\beql{cd}
\mathcal{D}_d\left(\frac{2^{d-1}}{n-B_d} +s_{d+1}\right) 
+s_{d+1}+\sum_{j=d+1}^D s_jr_j<1.
\eeq
It is clear now that the corresponding condition (\ref{NB1}) for $d=D$
is just a special case of this.

For the integral 
\[T(I^{(2)}):=\int_{\cA(L_0;I^{(2)})}|S(\bal)| \d \bal\] 
we will provide an estimate for $L$ by
using the fact that our $R$-tuple $\bal$ belongs to $\minor$. It
follows from \eqref{ci}, \eqref{cii} and \eqref{ciii} that
\[\|q_1\alpha_{i,d}\|\le q_1q_d^{-1}Q_dP^{-d}\le Q_1Q_dP^{-d}\]
for $1\le d\le D$ and $i\le r_d$. If we write $s_{\rm max}=\max s_d$
and $e_{\rm max}=\max e(d)$, then \eqref{Qform+} yields 
\[\|q_1\alpha_{i,d}\|\le L^{-2s_{\rm max}}P^{-d}(\log P)^{2e_{\rm max}},\]
with \[q_1\le Q_1\le L^{-s_{\rm max}}(\log P)^{e_{\rm max}}.\]
Let $\varpi$ be as in Section \ref{s:2}. Then if
$P$ is large enough it follows that one would have
\[\|q_1\alpha_{i,d}\|\le P^{-d+\varpi}\;\;\;(1\le d\le D,\;\; i\le r_d)\]
with
\[q_1\le P^{\varpi}\]
so long as
\[L\ge P^{-\varpi/4s_{\rm max}}.\]
However this would place $\bal$ in the major arcs, in view of the
definition \eqref{majdef}.  We therefore conclude that
\beql{I2L0}
L_0\ll P^{-\varpi/4s_{\rm max}}
\eeq
for $\bal\in I^{(2)}$.

We can now use \eqref{ci}, \eqref{cii} and \eqref{ciii} as before to show that
\begin{align*}
\meas(\cA(L_0;I^{(2)})) &\ll  P^{\ve}Q_1\prod_{j=1}^D(Q_jP^{-j})^{r_j}\\
&\ll  P^{\ve-r_1-2r_2-\dots -Dr_D}
L_0^{-(s_1+s_1r_1+\dots+s_Dr_D)}.
\end{align*}
It follows that
\[T(I^{(2)})\ll P^{n-\mathcal{D}+\ve} 
L_0^{1-(s_1+s_1r_1+\dots+s_Dr_D)}.\] 
In view of \eqref{I2L0} we obtain a satisfactory bound 
$T(I^{(2)})\ll P^{n-\mathcal{D}-\delta}$, for some $\delta>0$, 
under the constraint
\[s_1+\sum_{j=1}^D s_jr_j<1.\]
The reader should notice that this condition is the case $d=0$ of
\eqref{cd}, since we have defined $\mathcal{D}_0=0$ in connection with
\eqref{eq:Dj}.

We therefore see that we have a satisfactory minor arc estimate
provided that \eqref{cd} holds for all $d\in\Delta\cup\{0\}$, as
required for Theorem~\ref{thm:main}.

\section{The major arc contribution}\label{sec:major}

We now turn
to the major arc analysis, with the goal of establishing
\eqref{eq:major} under suitable hypotheses on $\major$ and the forms
$(F_{i,d})$. Let us define $$
S(\a,q):=\sum_{\x\bmod{q}}
e_q\left(\sum_{d=1}^D\sum_{i=1}^{r_d}a_{i,d} F_{i,d}(M\x+\m_0)\right),
$$
for $\a=(a_{i,d})\in (\ZZ/q\ZZ)^R$ with $\gcd(q,\a)=1$.
Next, define the truncated singular series
$$
\ss(H):=\sum_{q\leq H} \frac{1}{q^n}\sum_{\substack{\a \bmod{q}\\ \gcd(q,\a)=1}}
S(\a,q),
$$
for any $H>0$. We put  $\ss=\lim_{H\rightarrow \infty}\ss(H)$,
whenever this limit exists. 
We will also need to study the integral
\begin{equation}
  \label{21-si}
\mathfrak{I}(H)=
\frac{1}{M^n}\int_{[-H,H]^R}
\int_{\mathcal{B}} e\left(\sum_{d=1}^D\sum_{i=1}^{r_d}\gamma_{i,d} F_{i,d}(\x)\right)
\d\x\d \bga,
\end{equation}
for any $H>0$, where $\bga=(\gamma_{i,d}).$
Recalling \eqref{eq:local-inf}, we have 
$\sigma_\infty=\lim_{H\rightarrow \infty}
\mathfrak{I}(H)$, whenever the limit  exists. The main aim of this
section is to establish the 
following result.

\begin{lemma}\label{lem:major}
Assume that 
\begin{equation}\label{eq:ant}
s_1+\sum_{j=1}^D s_jr_j<1.
\end{equation}
Then the singular series $\ss$ and the singular integral
$\mathfrak{I}$ are absolutely convergent.  Moreover, if we choose 
\beql{eq:choosedelta}
\varpi=\frac{1}{2R+4}
\eeq
then there is a positive constant $\delta$ such that
$$
\int_{\major}S(\bal)\d\bal=\ss \mathfrak{I}P^{n-\mathcal{D}}
+O(P^{n-\mathcal{D}-\delta}).
$$
\end{lemma}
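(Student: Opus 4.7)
The plan is to follow the standard circle-method blueprint for the major arcs, using the complete-sum analogue of the Weyl estimates from Sections~\ref{sec:ES}--\ref{sec:iterate} to secure absolute convergence of the singular series and the necessary tail bounds.

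First I would carry out the pointwise approximation of $S(\bal)$ on a single piece $\major_{q,\a}$. Writing $\alpha_{i,d}=a_{i,d}/q+\beta_{i,d}$ with $|\beta_{i,d}|\leq P^{-d+\varpi}$, the sum over $\x$ splits according to residue classes modulo $q$, and in each class the smooth factor $e\!\left(\sum_{i,d}\beta_{i,d}F_{i,d}(\m_0+M\x)\right)$ may be replaced by an integral via partial summation. After the substitution $\gamma_{i,d}=P^{d}\beta_{i,d}$, so that $|\bga|\leq P^{\varpi}$, this should yield
\[S(\bal)=\frac{P^{n}}{M^{n}}\cdot\frac{S(\a,q)}{q^{n}}\cdot J(\bga)+O\!\left(qP^{n-1+\varpi}\right),\]
along the lines of Birch \cite{birch}. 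Integrating this identity over $\bbe$ and then summing over $q\leq P^{\varpi}$ and the admissible $\a\bmod q$ converts the main term into $P^{n-\cD}\ss(P^{\varpi})\mathfrak{I}(P^{\varpi})$; careful bookkeeping of the error, weighted by the measure $P^{R\varpi-\cD}$ of each arc and by the $q^{R}$ choices of $\a$, yields a total contribution of size $O(P^{n-\cD-1+(2R+3)\varpi})$. With $\varpi=1/(2R+4)$ this is comfortably $O(P^{n-\cD-\delta})$ for some $\delta>0$.

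The substantive step is the absolute convergence of $\ss$ and $\mathfrak{I}$, together with quantitative tail bounds. For the singular series, I would rerun the iterative Weyl differencing of Sections~\ref{sec:ES}--\ref{sec:iterate} verbatim for the complete exponential sum $S(\a,q)$, with $P$ replaced by $q$ and the perturbation $g$ set to zero; the output should be a bound of the form $q^{-n}|S(\a,q)|\ll q^{-\theta}$ in which the saving $\theta$ is governed by the very same exponents $s_{d}$. Summed against the $q^{R}$ available choices of $\a$, the resulting majorant converges precisely under the $d=0$ case of \eqref{cd}, which is our hypothesis \eqref{eq:ant}, and it also delivers a tail bound $|\ss-\ss(H)|\ll H^{-\delta'}$. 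The singular integral $\mathfrak{I}$ is treated by repeated integration by parts on $\mathcal{B}$, producing a pointwise decay $J(\bga)\ll(1+|\bga|)^{-(\cD+\delta')/D}$ adequate for absolute convergence and for $|\mathfrak{I}-\mathfrak{I}(H)|\ll H^{-\delta'}$. The main obstacle will be extracting from the Weyl iteration a bound on $|S(\a,q)|$ whose average over $\a\bmod q$ saves a genuine power of $q$ that matches \eqref{eq:ant} exactly; once this is in hand, combining the two tail estimates with the pointwise approximation above delivers the claimed asymptotic.
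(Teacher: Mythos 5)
The first half of your argument (the pointwise approximation of $S(\bal)$ on $\major_{q,\a}$, the extraction of the main term $P^{n-\cD}\ss(P^{\varpi})\mathfrak{I}(P^{\varpi})$, and the error bookkeeping with $\varpi=1/(2R+4)$) matches the paper. The gap lies in the ``substantive step''. For the singular series you propose a bound of the shape $q^{-n}|S(\a,q)|\ll q^{-\theta}$ \emph{uniform} in $\a$ with $\gcd(q,\a)=1$, to be summed against the $q^{R}$ choices of $\a$. That cannot work: a uniform saving would have to satisfy $\theta>R+1$ for $\sum_{q}q^{-n}A(q)$ to converge, and no such saving is available (nor does \eqref{eq:ant} supply it — the relevant $\theta=1-\sum_j s_jr_j$ is small). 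Moreover a uniform power saving simply fails for systems of differing degrees: if the lower-degree components $\a^{(j)}$ are highly divisible by $q$ the Weyl iteration gives no saving from those degrees. The paper's Lemma \ref{lem:majorA} instead produces the $\a$-dependent bound $S(\a,q)\ll q^{n+\ve}\min_{j}\bigl(\gcd(q,\a^{(j)},\dots,\a^{(D)})/q\bigr)^{1/s_j}$ (obtained by setting $P=q^{A}$ for large $A$ and feeding $\bal=\a/q$ into Lemma \ref{lem:iter}, not by rerunning the differencing with $P=q$), and the average over $\a$ is then controlled by counting how many $\a^{(j)}$ give a prescribed gcd $d_j$ and replacing the minimum by a weighted geometric mean $\prod_j(d_j/q)^{\lambda_j/s_j}$ with $\sum\lambda_j=1$; it is exactly this interplay that makes \eqref{eq:ant} the right hypothesis.

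The treatment of $\mathfrak{I}$ has the same defect in sharper form. A decay $J(\bga)\ll(1+|\bga|)^{-(\cD+\delta')/D}$ in the sup-norm alone cannot give absolute convergence of an integral over $\RR^{R}$ unless the exponent exceeds $R$, and $\cD/D\le R$ always (with equality only when all forms have degree $D$); besides, integration by parts does not yield such decay in the presence of stationary points. What is needed, and what the paper proves in Lemma \ref{lem:majorB} by the real-variable analogue of the $S(\a,q)$ argument (take $\bal=(P^{-d}\gamma_{i,d})$, $P=|\bga|^{A}$, and apply Lemma \ref{lem:iter}), is the componentwise bound $J(\bga)\ll|\bga|^{\ve}\min_{j}|\bga^{(j)}|^{-1/s_j}$, after which the tail integral is again handled by the weighted geometric-mean device. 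So while you correctly identify where the difficulty sits, the mechanisms you propose for both $\ss$ and $\mathfrak{I}$ would not close it.
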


The condition in the lemma is the case $d=0$ of
the condition in the statement
of Theorem \ref{thm:main}.  Once the lemma is established we will 
have $\ss=\prod_p \sigma_p$ by the argument of Davenport 
\cite[Chapter 17]{DavAM}, for example. We leave the details to the reader.
Theorem \ref{thm:main} then follows.  

Recall the definition of the major arcs $\mathfrak{M}$  from Section
\ref{s:2},  defined in terms of the parameter $\varpi\in (0,1/3)$.  
Any $\bal\in \mathfrak{M}_{q,\a}$ can be written
$$
\alpha_{i,d}=\frac{a_{i,d}}{q}+\theta_{i,d}
$$
for $1\leq i\leq r_d$ and $d\in \Delta$.
Our  first step in the analysis of $S(\bal)$ on $\mathfrak{M}$
is an analogue of \cite[Lemma~5.1]{birch}. The 
argument is well-known and we 
leave the details to the reader.  
It leads to the 
conclusion that 
\begin{equation}\label{eq:51}
\begin{split}
S(\bal)&-(qM)^{-n}P^n S(\a,q)J(\bga)\\
&\ll q\sum_{1\leq d\leq D}\sum_{1\leq i\leq r_d} |\theta_{i,d}|P^{n+d-1}+qP^{n-1},
\end{split}
\end{equation}
where $J(\bga)$ is given by \eqref{eq:defn-J}, and $\bga$ is the
vector whose $i,d$ entry is $\theta_{i,d}P^d$. 
But then it follows that 
$$
S(\bal)=(qM)^{-n}P^n S(\a,q)J(\bga)+O(P^{n-1+2\varpi}),
$$
for any $\bal\in \mathfrak{M}$.
The major arcs are easily seen to have measure
$O(P^{-\mathcal{D}+(2R+1)\varpi})$. Hence   
$$
\int_{\mathfrak{M}}S(\bal) \d\bal =P^{n-\mathcal{D}}
\mathfrak{S}(P^\varpi)\mathfrak{I}(P^\varpi) +O(P^{n-\mathcal{D}-1+(2R+1)\varpi+2\varpi}).
$$
This error term is satisfactory for Lemma \ref{lem:major} if $\varpi$
is taken as in \eqref{eq:choosedelta}. 

In order to complete the proof of the lemma, it remains to show that 
$\ss$ and  $\mathfrak{I}$ are absolutely convergent when
\eqref{eq:ant} holds, and  
that there is a positive constant $\delta$ such that 
\begin{equation}\label{eq:show1}
\mathfrak{S}-\mathfrak{S}(H)\ll H^{-\delta}
\end{equation}
and 
\begin{equation}\label{eq:show2}
\mathfrak{I}-\mathfrak{I}(H)\ll H^{-\delta},
\end{equation}
for any $H>0$.

Beginning with the singular series, 
we proceed to use \eqref{eq:51} and our Weyl estimate Lemma 
\ref{lem:iter} to estimate the complete exponential sum $S(\a,q)$, as follows.

\begin{lemma}\label{lem:majorA}
Let $\ve>0$ be given. Then 
$$
S(\a,q)\ll q^{n+\ve} \min_{j\in \Delta} 
\left(\frac{\gcd(q,\a^{(j)},\dots,\a^{(D)})}{q}\right)^{1/s_j},
$$
where $\a^{(j)}=(a_{1,j},\dots,a_{r_j,j})$ for any $j\in \Delta$.
\end{lemma}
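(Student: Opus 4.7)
The plan is to connect the complete exponential sum $S(\a,q)$ to the incomplete sum $S(\a/q)$ via (\ref{eq:51}), control the latter with the iterated Weyl inequality from Lemma~\ref{lem:iter}, and then exploit the rationality $\alpha_{i,d}=a_{i,d}/q$ to turn Weyl approximations into a divisibility statement involving $g := \gcd(q,\a^{(j)},\ldots,\a^{(D)})$.

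Substituting $\bal = \a/q$ in (\ref{eq:51}), so that $\theta_{i,d} = 0$, $\bga = \mathbf{0}$ and $J(\mathbf{0}) = \meas(\mathcal{B}) > 0$, we obtain
$$|S(\a,q)| \ll q^n L + q^{n+1} P^{-1},\qquad L := P^{-n}|S(\a/q)|,$$
for any $P \geq 1$. Fix $j \in \Delta$ and take $P = q^A$ for a sufficiently large constant $A$ (depending only on the system and on $j$); the error $q^{n+1-A}$ is then negligible. We may also assume $L > q^{-1/s_j}$, since otherwise $q^n L \leq q^{n-1/s_j} \leq q^n (g/q)^{1/s_j}$ already gives the desired bound.

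Applying Lemma~\ref{lem:iter} to $\bal = \a/q$, two scenarios occur. If $\bal \in I^{(1)}_d$ for some $d \in \Delta$ with $d \geq j$, then (\ref{LQb}) gives $L \ll P^{-\gamma_d + \ve}$ with $\gamma_d = (n-B_d)/(2^{d-1} + (n-B_d)s_{d+1}) > 0$; choosing $A$ so that $A\gamma_d \geq 1/s_j$ for each of the finitely many relevant $d$ yields $q^n L \ll q^{n + \ve}(g/q)^{1/s_j}$, using $(g/q)^{1/s_j} \geq q^{-1/s_j}$. Otherwise $\bal \in I^{(2)}$ or $\bal \in I^{(1)}_d$ with $d < j$, and Lemma~\ref{lem:iter} produces a positive integer $q_j \leq Q_j = (\log P)^{e(j)} L^{-s_j}$ satisfying $q_k \mid q_j$ and $\|q_k\alpha_{i,k}\| \leq Q_k P^{-k}$ for every $k \in \Delta$ with $k \geq j$ and every relevant $i$.

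In this second scenario, (\ref{ci}) and (\ref{ciii}) together give
$$\|q_j a_{i,k}/q\| = \|q_j \alpha_{i,k}\| \leq (q_j/q_k) Q_k P^{-k} \leq Q_j^2 (\log P)^{O(1)} P^{-k},$$
where we used $q_j \leq Q_j$ and $Q_k \ll Q_j (\log P)^{O(1)}$ (a consequence of $s_k \leq s_j$ for $k \geq j$ and $L \leq 1$). The assumption $L > q^{-1/s_j}$ yields $Q_j < q(\log P)^{e(j)}$, so $Q_j^2 (\log P)^{O(1)} P^{-k} < 1/q$ once $A$ is large enough. Hence $q \mid q_j a_{i,k}$, equivalently $q/\gcd(q,a_{i,k}) \mid q_j$, and taking the least common multiple over all relevant pairs $(i,k)$ gives $q/g \mid q_j$. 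Therefore $q/g \leq Q_j$, so $L \leq (g/q)^{1/s_j}(\log P)^{e(j)/s_j}$, and $q^n L \ll q^{n+\ve}(g/q)^{1/s_j}$. Minimizing over $j \in \Delta$ completes the proof. The principal difficulty is the coordinated choice of $A$: it must be large enough both to make the integrality deduction in the second scenario succeed (forcing the single denominator $q_j$ to work simultaneously for every $k \geq j$ via the divisibility $q_k\mid q_j$) and to make the direct Weyl bound in the first scenario overcome the factor $q^n$ even when $g=1$, while remaining uniform in $q$ and $\a$.
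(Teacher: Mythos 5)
Your proposal is correct and follows essentially the same route as the paper: reduce $S(\a,q)$ to $L$ via \eqref{eq:51} with $P=q^A$, invoke Lemma \ref{lem:iter}, and in the non-trivial case deduce $q/\gcd(q,\a^{(j)},\dots,\a^{(D)})\mid q_j\le Q_j$ to bound $L$. The only (harmless) differences are organizational — you fold $I^{(1)}_d$ with $d<j$ into the approximation case and run the divisibility argument under the standing assumption $L>q^{-1/s_j}$, whereas the paper treats all of $I^{(1)}_d$ via the $S(\a,q)\ll1$ bound and handles the "non-integral" subcase of $I^{(2)}$ separately.
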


\begin{proof}
Noting that $J(\mathbf{0})\gg1$, we may take 
 $(\theta_{i,d})=\mathbf{0}$ in \eqref{eq:51} to conclude that 
 $$
 S(\a,q)\ll  \frac{q^n|S(\bal)|}{P^n} + \frac{q^{n+1}}{P},
$$
with $\bal=a^{-1}\a$.
In what follows we will take $P=q^{A}$ for some large value of $A$ to
be specified during the course of the proof. Assuming that $A>n+1$, in
the first instance, it follows from the previous bound that  
\begin{equation}\label{eq:52}
 S(\a,q)\ll 1+ q^n L,
\end{equation}
where $L$ is defined via $|S(\bal)|=P^nL$.
We now apply Lemma~\ref{lem:iter}.

If there exists $d\in \Delta$ such that $\bal\in I_d^{(1)}$ then 
$L$ satisfies 
\eqref{LQb}.  Once combined  with \eqref{eq:52}, this gives
$$
 S(\a,q)\ll 1+ \frac{q^nP^\ve   }{P^{(n-B_d)/
 (2^{d-1}+(n-B_d)s_{d+1})}}.
$$
This is $O(1)$ provided $A$  satisfies
$A(n-B_d)>(2^{d-1}+(n-B_d)s_{d+1})n.$

Suppose next that $\bal\in I^{(2)}$. Then Lemma \ref{lem:iter}
produces a sequence of positive integers $q_j$, for $j\in \Delta$,  
which satisfy the conditions (\ref{ci}), (\ref{cii}) and (\ref{ciii}).
For each $j\in \Delta$ and $i\leq r_j$ we may choose $b_{i,j}\in \ZZ$
and $z_{i,j}\in \RR$, such that 
$$ 
\frac{q_j a_{i,j}}{q} =b_{i,j}+z_{i,j}
$$ 
with $|z_{i,j}|\leq Q_jP^{-j}$.
If there is a choice of $i,j$ for which 
${q_j a_{i,j}}\neq qb_{i,j}$, then we would be able to conclude that
$$
\frac{1}{qq_j}\leq \frac{|z_{i,j}|}{q_j}\leq \frac{Q_jP^{-j}}{q_j}\ll
\frac{L^{-s_j} P^{-j+\ve}}{q_j}, 
$$
by \eqref{Qform+}.
But then $L^{s_j}\ll qP^{-j+\ve}$, which once substituted into
\eqref{eq:52}, would show that  
$S(\a,q)\ll 1$ provided $A$  satisfies $jA-1>ns_j$.

We may therefore proceed under the assumption that 
${q_j a_{i,j}}=qb_{i,j}$ for every $j\in \Delta$ and every $i\leq
r_j$, or in other words, that $q_j\mathbf{a}^{(j)}=q\mathbf{b}^{(j)}$.
This implies that 
\[q\mid\gcd(qq_j\,,\,q\mathbf{b}^{(j)})=
\gcd(qq_j\,,\,q_j\mathbf{a}^{(j)})=q_j\gcd(q\,,\,\mathbf{a}^{(j)}).\]
Moreover, in view of 
(\ref{ci}),  we have $\gcd(q,\a^{(j)})\mid \gcd(q,\a^{(k)})$ when
$k>j$ and $k\in \Delta$. 
Thus 
\[q\mid q_j\gcd(q,\a^{(j)},\dots,\a^{(D)}),\]
for every $j\in \Delta$.  Applying \eqref{cii} and \eqref{Qform+} we
are therefore  
led to the conclusion that 
$$
\frac{q}{\gcd(q,\a^{(j)}, \dots,\a^{(D)})}\leq q_j
\leq Q_j =(\log P)^{e(j)} L^{-s_j},
$$
for every $j\in \Delta$. 
Noting that $(\log P)^{e(j)/s_j}\ll q^\ve$, 
this produces an upper bound for $L$ which we substitute into
\eqref{eq:52} to arrive at the statement of the lemma.\end{proof}

Using this result we may now  handle the singular series. Let 
$$
A(q)=
\sum_{\substack{\a \bmod{q}\\ \gcd(q,\a)=1}}
|S(\a,q)|.
$$
Let us put $d_j=\gcd(q,\a^{(j)},\dots,\a^{(D)})$ for each $j\in \Delta$. 
Suppose that $j_0$ is the least index $j\in \Delta$.
Then $d_{j_0}=1$ 
since $\gcd(q,\a)=1$. Moreover, we have 
$d_j\mid q$ for every $j\in \Delta$.
The number of $\a^{(j)}\bmod{q}$  associated to a given $d_j$  is 
$
(q/d_j)^{r_j}.
$
Moreover the total number of $d_1,\dots,d_D$ associated to a given $q$
is at most  
$\tau(q)^D=O(q^\ve)$.
Next we note that 
$$
\min_{j\in \Delta} \left(\frac{d_j}{q}\right)^{1/s_j}\leq 
\prod_{j\in \Delta}
 \left(\frac{d_j}{q}\right)^{\lambda_j/s_j},
 $$
 for any real numbers $\lambda_j\geq 0$ such that $\sum_{j\in \Delta}\lambda_j=1$.
We will apply this with 
\begin{equation}\label{eq:case}
\lambda_j = 
\begin{cases}
\theta+r_{j_0}s_{j_0}, & \mbox{if $j=j_0$,}\\
r_js_j, & \mbox{if $j\in \Delta\setminus \{j_0\}$,}
\end{cases}
\end{equation}
where 
$
 \theta=1-(s_1r_1+\dots+s_Dr_D).
$
In view of  our assumption
 \eqref{eq:ant}, such a choice is possible with  
$\theta\in (0,1)$.
It therefore follows from Lemma~\ref{lem:majorA} that 
\begin{align*}
A(q)
&\ll  q^{n+\ve/2} 
\sum_{d_1,\dots,d_D\mid q}
 \left(\frac{1}{q}\right)^{\theta/s_{j_0}}
\prod_{j\in \Delta} 
 \left(\frac{q}{d_j}\right)^{r_j}
\cdot \left(\frac{d_j}{q}\right)^{r_j}
\ll  q^{n-\theta/s_{j_0}+\ve}.
\end{align*}
Assuming  that $\theta/s_{j_0}>1$,
which is evidently  implied by \eqref{eq:ant}, 
this shows that the singular series is absolutely convergent and 
that \eqref{eq:show1} holds for an appropriate $\delta>0$.

We now turn to the exponential integral 
$J(\bga)$ in 
\eqref{eq:defn-J}, for general values of $\bga$.

\begin{lemma}\label{lem:majorB}
We have $J(\bga)\ll 1$ for any $\bga$. Moreover, for given 
 $\ve>0$, we have 
$$
J(\bga)\ll |\bga|^\ve
\min_{j\in \Delta} 
|\bga^{(j)}|^{-1/s_j},
$$
where $\bga^{(j)}=(\gamma_{1,j},\dots,\gamma_{r_j,j})$.
\end{lemma}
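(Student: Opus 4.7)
The first bound $J(\bga)\ll 1$ is immediate from $|\mathcal{B}|\ll 1$ and $|e(\cdot)|=1$. For the second, my plan is to mirror the proof of Lemma~\ref{lem:majorA}, now replacing the rational approximation $\bal=\a/q$ by the real vector $\bal=(\gamma_{i,d}P^{-d})$ for a large auxiliary parameter $P$, and $S(\a,q)$ by an exponential sum $\widetilde S(\bal)$ obtained from $J(\bga)$ via a Riemann-sum approximation. The substitution $\x=\y/P$, together with the Lipschitz bound $|\nabla(\sum_{d,i} \gamma_{i,d}F_{i,d})|\ll|\bga|$ on $[-1,1]^n$, produces
\[
J(\bga)=P^{-n}\widetilde S(\bal)+O(|\bga|/P),
\]
where $\widetilde S(\bal):=\sum_{\y\in P\mathcal{B}\cap\ZZ^n}e\bigl(\sum\alpha_{i,d}F_{i,d}(\y)\bigr)$. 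Taking $M=1$ and $\m_0=\mathbf{0}$ throughout Section~\ref{sec:iterate}, Lemma~\ref{lem:iter} applies verbatim to $\widetilde S$, so writing $|\widetilde S(\bal)|=P^n L$ the task reduces to bounding $L$.

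Fix $j\in\Delta$ and assume $|\bga^{(j)}|\ge 1$ (else the asserted bound is vacuous). Take $P=|\bga|^A$ for a constant $A$ to be chosen large in terms of $s_j$, $j$ and the positive quantities $\eta_d:=(n-B_d)/(2^{d-1}+(n-B_d)s_{d+1})$ for $d\in\Delta$; the Riemann-sum error then satisfies $|\bga|/P=|\bga|^{1-A}\ll|\bga|^\ve|\bga^{(j)}|^{-1/s_j}$. I would then split according to the dichotomy of Lemma~\ref{lem:iter}.

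If $\bal\in I^{(1)}_d$ for some $d\in\Delta$, then \eqref{LQb} gives $L\ll P^{-\eta_d+\ve}$, and any $A$ with $A\eta_d\ge 1/s_j$ yields $L\ll|\bga|^\ve|\bga^{(j)}|^{-1/s_j}$. Otherwise $\bal\in I^{(2)}$, and Lemma~\ref{lem:iter} supplies positive integers $q_d\le Q_d$ with $|q_d\gamma_{i,d}-b_{i,d}P^d|\le Q_d$ for some $b_{i,d}\in\ZZ$. Specialising to $d=j$: either every $b_{i,j}=0$, in which case $|\bga^{(j)}|\le Q_j\ll(\log P)^{e(j)}L^{-s_j}$ by \eqref{Qform+} and the desired bound on $L$ follows upon rearranging; or some $b_{i,j}\ne 0$, which forces $Q_j\gg P^j/|\bga|$ (the sub-case $Q_j>P^j/2$ being trivial), hence $L^{s_j}\ll|\bga|(\log P)^{e(j)}/P^j$, and for $A\ge 2/j$ this again implies $L\ll|\bga|^\ve|\bga^{(j)}|^{-1/s_j}$ in view of $|\bga^{(j)}|\le|\bga|$.

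Combining both cases with the Riemann-sum identity and minimising over $j\in\Delta$ yields the claim. The principal delicacy is the $I^{(2)}$ case, where the dependence of $Q_d$ on $L$ through \eqref{Qform+} requires careful tracking of the relative sizes of $P$, $|\bga|$ and $L$ in order to dispose of the integer $b_{i,d}$ in the rational approximation; modulo this bookkeeping the argument is a formal analogue of that for Lemma~\ref{lem:majorA}.
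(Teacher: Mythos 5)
Your proposal is correct and follows essentially the same route as the paper: the paper likewise passes from $J(\bga)$ to the exponential sum via its estimate \eqref{eq:51} (applied with $\a=\mathbf{0}$, $q=1$ and $\bal=(P^{-d}\gamma_{i,d})$, which is exactly your Riemann--sum step), sets $P=|\bga|^A$, and runs the same $I^{(1)}_d$ versus $I^{(2)}$ dichotomy from Lemma~\ref{lem:iter}, splitting the latter according to whether the integer parts $b_{i,j}$ vanish. The only cosmetic difference is that the paper normalises $\gcd(q_j,\mathbf{b}^{(j)})=1$ to force $q_j=1$ in the all-$b_{i,j}=0$ sub-case, whereas you simply bound $|\gamma_{i,j}|\le Q_j/q_j\le Q_j$, which works equally well.
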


\begin{proof}
The estimate $J(\bga)\ll 1$ is trivial. We proceed to establish the bound 
$$
J(\bga)\ll  
|\bga^{(j)}|^{-1/s_j}|\bga|^\ve,
$$
for 
any $j\in \Delta$.
In doing so we may assume that $|\bga^{(j)}|>1$, since otherwise it
follows from the trivial bound. 

Our proof is analogous to the proof of Lemma \ref{lem:majorA}.  The
starting point is  \eqref{eq:51}, which we apply with  
$\bal=(P^{-d}\gamma_{i,d})$,  $\a=\mathbf{0}$ and $q=1$.  This gives
\begin{equation}\label{eq:53}
|J(\bga)|\leq P^{-n}|S(\bal)| + O(|\bga|P^{-1}) =
L + O(|\bga|P^{-1}).
\end{equation}
We take  $P=|\bga|^{A}$ for some large value of $A$ to 
be specified during the course of the proof.  
Our key ingredient is Lemma~\ref{lem:iter}.  
The case in which  $\bal\in I_d^{(1)}$, for some $d\in \Delta$, is
easily dispatched on taking  
$A$ to satisfy 
$A-1>1/s_j$ and $A(n-B_d)>(2^{d-1}+(n-B_d)s_{d+1})/s_j$.

It remains to consider the possibility that $\bal\in I^{(2)}$. Then
Lemma~\ref{lem:iter} produces a positive integer $q_j$ 
which satisfies the conditions  (\ref{cii}) and (\ref{ciii}).
For each $i\leq r_j$ we may choose $b_{i,j}\in \ZZ$
and $z_{i,j}\in \RR$, such that 
\begin{equation}\label{eq:54}
q_j \alpha_{i,j} =b_{i,j}+z_{i,j},
\end{equation}
with $\gcd(q_j,\mathbf{b}^{(j)})=1$ and  
$|z_{i,j}|\leq Q_jP^{-j}$.
If there is a choice of $i\leq r_j$ for which $b_{i,j}\neq 0$, 
then we would be able to conclude that
\begin{align*}
1 \leq |b_{i,j}|\leq  q_j|\alpha_{i,j}|+|z_{i,j}|&\leq  
q_jP^{-j}|\gamma_{i,j}|+Q_jP^{-j}, 
\end{align*}
whence
$$
1\leq Q_jP^{-j}|\gamma_{i,j}|+Q_jP^{-j} \ll Q_jP^{-j}|\bga^{(j)}| 
\ll L^{-s_j} P^{-j+\ve}|\bga^{(j)}|.
$$
This provides an upper bound for $L$, which once substituted into
\eqref{eq:53}, produces a satisfactory estimate for  
$J(\bga)$  provided that $A$ is chosen 
to satisfy $A-1>1/s_j$ and $A>2/j$. 

We proceed under the assumption that 
$b_{i,j}=0$ 
in \eqref{eq:54},
for every $i\leq r_j$.
But then $q_j=1$ and 
it follows that 
$$
P^{-j}|\gamma_{i,j}|=|\alpha_{i,j}| = |z_{i,j}| \leq Q_jP^{-j}=  
(\log P)^{e(j)}L^{-s_j}P^{-j}, 
$$
for every $i\leq r_j$.  Hence  $L\ll |\bga^{(j)}|^{-1/s_j}(\log P)^{e(j)/s_j}$.
Substituting this into \eqref{eq:53}, we easily  conclude the proof of
the lemma. 
\end{proof}

We now have everything in place to show that the singular integral 
converges. Recalling 
\eqref{21-si} and appealing to Lemma \ref{lem:majorB}, we find that
\begin{align*}
|\mathfrak{I}-\mathfrak{I}(H)|
&\leq \int_{|\bga|>H} |J(\bga)|\d\bga\\
&\ll \int_{|\bga|>H} |\bga|^{\ve/2} \min_{j\in \Delta} |\bga^{(j)}|^{-1/s_j} 
\d\bga.
\end{align*}
Let $N=\#\Delta$ and let $\t\in \RR_{>0}^N$. 
For given  $j\in \Delta$, the set
of $\bga^{(j)}\in \RR^{r_j}$ satisfying $|\bga^{(j)}|=t_j$
has $(r_j-1)$-dimensional measure  $O(t_j^{r_j-1})$. 
Hence 
\begin{align*}
|\mathfrak{I}-\mathfrak{I}(H)|
&\ll
\int_{\substack{\t\in \RR_{>0}^N\\
|\t|>H}}
|\t|^{\ve/2}   \min_{j\in \Delta} \{t_j^{-1/s_j} \}
\left(\prod_{j\in \Delta} t_j^{r_j-1}\right)
\d\t\\
&\leq
\int_{\substack{\t\in \RR_{>0}^N\\
|\t|>H}}
|\t|^{\ve}   \min_{j\in \Delta} \{t_j^{-1/s_j} \}
\left(\prod_{j\in \Delta} t_j^{r_j-1-\ve/(2N)}\right)
\d\t.
\end{align*}
We will consider the contribution to the right hand side 
from $\t$ for which $|\t|=t_{j_0}$, for some $j_0\in \Delta$.
If $H\geq 1$ we have 
$$
\min_{j\in \Delta} \{t_j^{-1/s_j} \}
\ll \min_{j\in \Delta} \{(1+t_j)^{-1/s_j} \} 
\le \prod_{j\in \Delta} (1+t_j)^{-\lambda_j/s_j}, 
 $$
 with $\lambda_j$ given  by 
 \eqref{eq:case}.
This therefore leads to the overall contribution
\begin{align*}
&\ll
\int_{\substack{\t\in \RR_{>0}^N\\
|\t|=t_{j_0}>H}}
\frac{t_{j_0}^{\ve-\theta/s_{j_0}}}
{\prod_{j\in \Delta} (1+t_j)^{1+\ve/(2N)}}  
\d\t
\ll 
\int_{H}^\infty 
t_{j_0}^{\ve-\theta/s_{j_0}-1}
\d t_{j_0}.
\end{align*}
This establishes \eqref{eq:show2} for a suitable $\delta>0$, as
required, provided only that $\theta>0$.  
Recalling that 
$
 \theta=1-(s_1r_1+\dots+s_Dr_D)
$,
this condition is ensured by \eqref{eq:ant}, which thereby completes
the proof of Lemma \ref{lem:major}.

\section{Proof of Theorem \ref{thm:crude}}\label{sec:crude}

We begin by disposing of the case in which $D$ is the only degree
present, so that $r_D=R$ and $\cD=RD$. In this situation
\[n_0=R(R+1)(D-1)2^{D-1}\]
as in Birch's result \cite{birch}.  
Thus Theorem \ref{thm:crude} is 
trivial in the case $D=1$, and for $D\ge 2$ we have to
show that $n_0+R-1\le R^2D^2 2^{D-1}$ and $n_0+R-1\le (RD-1)2^{RD}$.  However
\begin{align*}
R(R+1)(D-1)2^{D-1}+R-1&\le \{R(R+1)(D-1)+R\}2^{D-1}\\
&\le (2R^2(D-1)+R^2)2^{D-1}\\
&\le R^2D^2 2^{D-1}
\end{align*}
since $2D-1\le D^2$.  The first estimate then follows.  For the second
bound we observe that
\begin{align*}
R(R+1)(D-1)2^{D-1}+R-1&\le \{R(R+1)(D-1)+R-1\}2^{D-1}\\
&\le \{R(R+1)+R-1\}(RD-1)2^{D-1}
\end{align*}
since we are now supposing that $D\ge 2$.  However $R(R+1)+R-1\le
2^{2R-1}$ for any $R\ge 1$ and $2^{D-1+2R-1}\le 2^{RD}$ for $D\ge
2$. This establishes the second estimate.

We may assume henceforth that not all the forms have the same degree,
whence $R\ge 2$ and $D\ge 2$.  We also note that $D+R-1\le\cD\le DR-1$.
We now proceed to dispose of the case in which $n_0=n_0(D)$. We have
$n_0(D)=\cD 2^{D-1}$, so that we need to show that $\cD 2^{D-1}+R-1\le
\cD^2 2^{D-1}$ and $\cD 2^{D-1}+R-1\le
(\cD-1)2^{\cD}$.  We begin by observing that
\[\cD 2^{D-1}+R-1\le(\cD+R-1)2^{D-1}\le 2\cD 2^{D-1}.\]
The first bound then follows since $2\cD\le\cD^2$.  Moreover
$2\cD\le 4(\cD-1)$ and $D+1\le\cD$ whence
\[2\cD 2^{D-1}\le (\cD-1)2^{D+1}\le (\cD-1)2^{\cD},\]
as required for the second bound.

For the rest of our argument we examine $n_0(d)$ for $d<D$, and we
assume that $\#\Delta\ge 2$.  This allows us to set
$E=\max\{d\in\Delta:d<D\}$.
We begin by observing that
\[t_d=\sum_{k=d}^D 2^{k-1}(k-1)r_k\le 2^{D-1}\sum_{k=1}^D (k-1)r_k
= 2^{D-1}(\cD-R)\]
for every $d\ge 1$, whence
\begin{align*}
t_{d+1}+\sum_{j=d+1}^D t_jr_j&\le
2^{D-1}(\cD-R)\{1+\sum_{j=1}^Dr_j\}\\
&=2^{D-1}(\cD-R)(1+R).
\end{align*}
We also have
\[\cD_d\le \cD-D\le\cD-2 \quad \mbox{and} \quad  \cD_d\le E(R-1)\]
for $0\le d\le D-1$.  Thus
\[n_0(d) \le 2^{D-1}\{(\cD-2)(\cD-R+1)+(\cD-R)(1+R)\}\]
and 
\[n_0(d) \le 2^{D-1}\{E(R-1)(\cD-R+1)+(\cD-R)(1+R)\}\]
for $0\le d\le D-1$.

It will therefore suffice to show that
\[ 2^{D-1}\{(\cD-2)(\cD-R+1)+(\cD-R)(1+R)\}+R-1\le\cD^22^{D-1}\]
and
\[ 2^{D-1}\{E(R-1)(\cD-R+1)+(\cD-R)(1+R)\}+R-1\le(\cD-1)2^{\cD}.\]
For the first inequality we note that the left hand side is at most
\begin{align*}
2^{D-1}\{(&\cD-2)(\cD-R+1)+(\cD-R)(1+R)+R-1\}\\
&=2^{D-1}\{\cD^2-R^2+2R-3\}\hspace{3cm}\\
&\le2^{D-1}\cD^2.
\end{align*}
For the second inequality one sees that the left hand side is at most
\[ 2^{D-1}\{E(R-1)(\cD-R+1)+(\cD-R)(1+R)+R\},\]
and
\begin{align*}
E(R-1)(&\cD-R+1)+(\cD-R)(1+R)+R \\
&\le E(R-1)(\cD-1)+(\cD-1)(R+1)\\
&=\{ER-E+R+1\}(\cD-1)\\
&\le 2RE(\cD-1).
\end{align*}
To complete the argument we observe that $R\le 2^{R-1}$ and $E\le
2^{E-1}$, and that $2^{D-1+R-1+E-1}\le 2^{\cD-1}$ since $D+R+E-2\le \cD$.

\end{document}